\numberwithin{equation}{section}
\DeclareMathOperator*{\esssup}{ess\,sup}
\newtheorem{theorem}{Theorem}[section]
\newtheorem{definition}[theorem]{Definition}
\newtheorem{lemma}[theorem]{Lemma}
\newtheorem{remark}[theorem]{Remark}
\newcommand{\hn}{{\mathbb{H}^{\text{N}}}}
\newcommand{\gaglifrac}[1][u]{{\frac{|#1(\xi)-#1(\eta)|^p}{|\eta^{-1}\circ\xi|^{Q+sp}}}}
\newcommand{\intbr}[1][r]{\int_{B_{#1}}}
\newcommand{\gaglifractwo}[4]{{\frac{|#1-#2|^p}{|#3^{-1}\circ#4|^{Q+sp}}}}
\newcommand{\intgaglifrac}[2][u]{{\int_{#2}\int_{#2}\frac{|#1(\xi)-#1(\eta)|^p}{|\eta^{-1}\circ\xi|^{Q+sp}}\,d\xi\,d\eta}}
\newcommand{\intgaglifractwo}[3][u]{{\int_{#3}\int_{#2}\frac{|#1(\xi)-#1(\eta)|^p}{|\eta^{-1}\circ\xi|^{Q+sp}}\,d\xi\,d\eta}}
\newcommand{\intgaglifracflip}[3][u]{{\int_{#3}\int_{#2}\frac{|#1(\xi)-#1(\eta)|^p}{|\eta^{-1}\circ\xi|^{Q+sp}}\,d\eta\,d\xi}}
\newcommand{\dhn}[2]{|#2^{-1} \circ #1|}
\newcommand{\gag}[1][u]{{\frac{|#1(\xi,t)-#1(\eta,t)|^{p-2}\Big(#1(\xi,t)-#1(\eta,t)\Big)}{|\eta^{-1}\circ \xi|^{Q+sp}_{\hn}}}}
\def\Yint#1{\mathchoice
    {\YYint\displaystyle\textstyle{#1}}%
    {\YYint\textstyle\scriptstyle{#1}}%
    {\YYint\scriptstyle\scriptscriptstyle{#1}}%
    {\YYint\scriptscriptstyle\scriptscriptstyle{#1}}%
      \!\iint}
\def\YYint#1#2#3{{\setbox0=\hbox{$#1{#2#3}{\iint}$}
    \vcenter{\hbox{$#2#3$}}\kern-.51\wd0}}
\def\longdash{{-}\mkern-3.5mu{-}} 
\def\fiint{\Yint\longdash}
\title[Local Boundedness with an Optimal Tail]{Nonlocal Quasilinear Parabolic Equations in Heisenberg Group : Local Boundedness with an Optimal Tail}
\author{Debraj Kar and Vivek Tewary}
\address{Department of Mathematics,  University of Kalyani and School of Interwoven Arts and Sciences, Krea University}
\email{debrajmath22@klyuniv.ac.in, vivek.tewary@krea.edu.in}
\begin{document}
	\begin{abstract}
		We prove local boundedness for a quasilinear parabolic equation on the Heisenberg group
		\[ 
			\partial_t u(\xi,t) + \text{p.v.}\int_{\hn} \frac{|u(\xi,t)-u(\eta,t)|^{p-2}(u(\xi,t)-u(\eta,t))}{|\eta^{-1}\circ \xi|^{Q+sp}}  \,d\eta = 0,
		\] with optimal regularity assumption on the tail term. We also prove interpolation inequalities and an extension theorem for fractional Sobolev spaces on the Heisenberg group.
	\end{abstract}

	\maketitle

\setcounter{tocdepth}{1}
\tableofcontents

	\section{Introduction}
	
	The aim of this paper is to prove local boundedness of the equation
	\begin{align}\label{maineq}
						\partial_t u(\xi,t) + \text{p.v.}\int_{\hn} K(\xi,\eta,t)|u(\xi,t)-u(\eta,t)|^{p-2}(u(\xi,t)-u(\eta,t))  \,d\eta = 0,
	\end{align} on the Heisenberg group $\hn$ where the kernel $K:\hn\times\hn\times\mathbb{R}\to\mathbb{R}$ satisfies
	\begin{align}
		\frac{\lambda}{\dhn{\xi}{\eta}^{Q+sp}} \leq K(\xi,\eta,t)\leq \frac{\Lambda}{\dhn{\xi}{\eta}^{Q+sp}}
	\end{align} for some numbers $\lambda,\Lambda>0$ a.e. $\xi,\eta,t\in \hn\times\hn\times\mathbb{R}$. The equation is modelled on the parabolic fractional $p$-Laplace equation and no regularity is imposed on the coefficients apart from boundedness.  Importantly, we impose optimal regularity requirement on the Tail, viz., $u\in L^{p-1}(0,T; L^{p-1}_{sp}(\hn))$. Previous work on quasilinear nonlocal parabolic equations imposed $L^\infty$ requirement in time, viz., \cite{Strom2019,Ding2021, PT23, Liao2024, APT8}. This changed in \cite{Kassweid2024} where boundedness estimates with an optimal tail were obtained in the linear case. A H\"older regularity result for quasilinear equations was found with an optimal tail in \cite{BK24}. We are able to prove local boundedness estimates for quasilinear nonlocal parabolic equations with an \emph{optimal tail} in the Heisenberg group setting. In the Euclidean case, the technique of \cite{Kassweid2024} is applied to local boundedness of parabolic p-Laplace equations in the preprint \cite{kumagai2024local}, however the solutions appear to be more regular than usual.

    The result is achieved by the concurrent use of the techniques proposed in \cite{Kassweid2024} and \cite{BK24}. Particularly, in \cite{Kassweid2024}, the tail is made part of the level of the functions in the De Giorgi iteration whereas in \cite{BK24}, the tail is treated as a source term. These techniques combined allow us to obtain the optimal regularity on the tail as expected from the existence theory of solutions to \cref{maineq}. For the optimal condition on the tail dictated by the existence theory, see \cite[Theorem A.2]{brasco2021continuity}. Apart from the concurrent use of these two techniques, we also implement and establish other important results that were not present in the literature. Since parabolic equations demand regularization in time, we prove the convergence theorems and estimates associated with regularization in time in \cref{SectionRegularize}. Due to the presence of a tail-type term $g(t)$ as a level in the test function, we prove the Caccioppoli inequality in complete detail in \cref{Caccioppolli}. Further, we could not find Gagliardo-Nirenberg interpolation inequality with the complete range of exponents required to establish regularity theory for parabolic equations, either in the full space or in domains for the Heisenberg group. Therefore, we establish these results too as part of the appendix to this paper.  With the auxilliary results on regularization and interpolation developed here as well as the Caccioppoli inequality with the good term, H\"older regularity with optimal tail follows in a straightforward manner as in \cite{BK24}.

    As remarked earlier, an important feature of this paper is that we establish a number of interpolation inequalities - one interpolating between Sobolev spaces and BMO (\cref{bmointerpol}) and another interpolating Sobolev spaces and H\"older spaces (\cref{interlope1}). The BMO interpolation inequality has appeared in the Euclidean setting in \cite{AzBed2015} We also prove an extension theorem for fractional Sobolev spaces for regular domains in Heisenberg group. While an abstract version of such a result exists \cite{DachunYang2022}, it is not known whether the abstract Sobolev spaces on homogeneous spaces defined using the Carnot-Caratheodory metric coincide with the fractional Sobolev spaces defined using merely the Gagliardo norm. Previously, the extension theorem for the usual Sobolev spaces on domains in the Heisenberg group was proved in the PhD thesis of \cite{Nhieu1996}.

    In this passage, we point out some related literature. Important results on regularity theory of parabolic p-Laplace type equations may be found in \cite{Strom2019,brasco2021continuity,Ding2021,agnid2023,APT8,Liao2024,Liaomodulus2024,Kass2024AnalPDE,Kassweid2024,liao2024timeinsensitivenonlocalparabolicharnack, diening2025, byundeepak2024, garain2025}. Some papers on the regularity theory for the p-Laplace type equations in the Heisenberg group setting are \cite{PicCalVar2022,MPPP23,FangZhang2024,Fang_Zhang_Zhang_2024}.

    In the rest of the Introduction, we set up the notation, state the main theorem and several auxilliary results required in the paper.

	
	\subsection{About the Heisenberg group}
	The Heisenberg group is the set $\mathbb{R}^{2N+1}$ equipped with the following group multiplication for $$\xi=(z,s)=(\xi,y,s)=(x_1, x_2, \ldots, x_N, y_1, y_2,\ldots, y_N, s)$$ and $$\xi':=(z',s')=(x',y',s')=({x'}_1, {x'}_2, \ldots, {x'}_N,{y'}_1, {y'}_2,\ldots, {y'}_N, s')$$ 
	\begin{align*}
		\xi\circ\xi' = (x+x', y+y', s+s'+2\langle x',y\rangle-2\langle x,y'\rangle).
	\end{align*}
	One defines a one-parameter group of automorphisms on $\hn$ as $\Phi_\lambda(\xi,y,s)=(\lambda x,\lambda y,\lambda^2 s)$ so that it is said to have a \emph{homogeneous dimesion} of $Q=2N+2$. A \emph{homogeneous norm} $d_0:\hn\to[0,\infty]$ is a function satisfying
	\begin{enumerate}
		\item $d_0(\Phi_\lambda(\xi))=\lambda d_0(\xi)$ for any $\xi\in\hn$ and $\lambda>0$.
		\item $d_0(\xi)=0$ if and only if $\xi=0$.
	\end{enumerate}
	One defines the standard \emph{homogeneous norm} on $\hn$ by 
	\[
		|\xi| = (|z^4|+|s|^2)^{\tfrac{1}{4}}.
	\] It is well known that all homogeneous norm on $\hn$ are equivalent. For a center $\xi_0\in\hn$ and radius $R>0$, we define the ball $B(\xi_0,R):=\{\xi\in\hn: |\xi_0^{-1}\circ\xi|<R\}$. 
	Moreover, Heisenberg group with any homogeneous norm $d_0$ satisfies a pseudo-triangle inequality in the form of the lemma below:
\begin{lemma}
	Let $d_0$ be a homogeneous norm on $\hn$. Then there exists a constant $c>0$ such that for all $\xi,\eta\in\hn$, it holds that
	\begin{enumerate}
		\item $d_0(\xi\circ\eta)=c(d_0(\xi)+d_0(\eta))$,
		\item $d_0(\xi\circ\eta)=\frac1c d_0(\xi)-d_0(\eta^{-1})$,
		\item $d_0(\xi\circ\eta)=\frac1c d_0(\xi)-c d_0(\eta)$.
	\end{enumerate}
\end{lemma}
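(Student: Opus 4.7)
The statement is the standard pseudo-triangle inequality for homogeneous norms on a homogeneous Lie group; I read the three relations as inequalities (namely $\leq$ in (1) and $\geq$ in (2), (3)), as the equality version is evidently a typographical artefact. The plan is to first prove (1) by a compactness/scaling argument, and then deduce (2) and (3) from it by elementary substitutions.

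\textbf{Step 1 — proof of (1).} A continuous homogeneous norm $d_0$ on $\hn$ induces the Euclidean topology on $\mathbb{R}^{2N+1}$, so the set
\[
    S := \bigl\{(\xi,\eta) \in \hn \times \hn : d_0(\xi) + d_0(\eta) = 1\bigr\}
\]
is compact. The map $(\xi,\eta) \mapsto d_0(\xi \circ \eta)$ is continuous (composition of the Heisenberg product with $d_0$), hence attains a finite maximum $c \geq 1$ on $S$. For arbitrary $(\xi,\eta)$ with $r := d_0(\xi) + d_0(\eta) > 0$, apply $\Phi_{1/r}$, noting that $\Phi_{1/r}$ is a group automorphism so $\Phi_{1/r}(\xi \circ \eta) = \Phi_{1/r}(\xi) \circ \Phi_{1/r}(\eta)$, and that $d_0 \circ \Phi_{1/r} = r^{-1} d_0$ by homogeneity. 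Since the rescaled pair lies in $S$, we obtain $d_0(\xi \circ \eta) \leq c(d_0(\xi) + d_0(\eta))$.

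\textbf{Step 2 — proof of (2).} Factor $\xi = (\xi \circ \eta) \circ \eta^{-1}$ and apply (1):
\[
    d_0(\xi) \leq c\bigl(d_0(\xi \circ \eta) + d_0(\eta^{-1})\bigr),
\]
which immediately rearranges to $d_0(\xi \circ \eta) \geq \tfrac{1}{c} d_0(\xi) - d_0(\eta^{-1})$.

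\textbf{Step 3 — proof of (3).} The inversion map is continuous, so $\eta \mapsto d_0(\eta^{-1})/d_0(\eta)$ is a continuous function on $\hn \setminus \{0\}$ which is homogeneous of degree zero (since $\Phi_\lambda(\eta^{-1}) = (\Phi_\lambda \eta)^{-1}$ and both numerator and denominator scale by $\lambda$). Thus it is bounded on the compact unit sphere $\{d_0 = 1\}$, giving $d_0(\eta^{-1}) \leq c_1 d_0(\eta)$. Inserting this into (2) and enlarging $c$ to absorb $c_1$ yields (3).

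\textbf{Main obstacle.} There is no substantive obstacle here; the entire argument reduces to the reduction-by-scaling trick for (1) and a routine substitution for (2)--(3). The only point one needs to pay attention to is ensuring that $d_0$ induces the Euclidean topology (so the relevant sets are truly compact) and that the one-parameter group $\Phi_\lambda$ distributes over the Heisenberg product --- both of which are built into the standing assumptions on $\hn$.
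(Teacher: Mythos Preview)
The paper states this lemma without proof, treating it as a known structural fact about homogeneous norms (it is the standard pseudo-triangle inequality, see e.g.\ \cite{Bonfig2007}). Your argument is correct and is the usual one: compactness plus scaling for (1), then the algebraic substitutions $\xi=(\xi\circ\eta)\circ\eta^{-1}$ and $d_0(\eta^{-1})\leq c_1 d_0(\eta)$ for (2) and (3). You are also right that the equalities in the displayed statement are typographical and should be read as $\leq$ in (1) and $\geq$ in (2), (3).

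One small remark: your compactness argument tacitly uses continuity of $d_0$, which is not listed among the two defining properties in the paper's definition of a homogeneous norm. This is harmless here --- continuity is part of the standard definition in the reference the paper relies on, and the equivalence of all homogeneous norms (which the paper also invokes) already presupposes it --- but it is worth being aware that the compactness step is where this hypothesis enters.
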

	\begin{remark}
		It is known that for the standard homogeneous norm on $\hn$, the constant $c$ above is $1$ so we may assume that $\hn$ is a metric space with metric given by $|\cdot^{-1}\circ\cdot|$.
	\end{remark}
	We also note that the Haar measure on $\hn$ is the same as the Lebesgue measure on $\mathbb{R}^{2N+1}$ and it satisfies the measure doubling property, viz.,
	\begin{align*}
		\left|B(\xi_0,2R)\right|\leq C \left|B(\xi_0,R)\right|
	\end{align*} for any $\xi_0\in\hn$ and $R>0$. These properties make $(\hn,|\cdot^{-1}\circ\cdot|,\mathcal{L}^{2N+1})$ into a $Q$-regular measure metric space with a doubling measure.

For $z_0=(\xi_0,t_0)\in\hn\times\mathbb{R}$, let $\mathcal{Q}_{r}(z_0)$ denote the cylinder $\mathcal{Q}_r(z_0):=B_r(\xi_0)\times (t_0-\rho^{sp},t_0)$.

    \subsection{Definitions and Main Results}

    We define the fractional Sobolev space $W^{s,p}(\Omega)$ as
	\begin{align*}
		W^{s,p}(\Omega):=\left\{u\in L^p(\Omega):\int_\Omega\int_\Omega\frac{|u(\xi)-u(\eta)|^p}{|\eta^{-1}\circ\xi|^{Q+sp}}\,d\xi\,d\eta<\infty\right\}.
	\end{align*}
Now we define a tail space $L^m_\gamma(\hn)$ on Heisenberg Group as
\begin{align*}
    L^m_\gamma(\hn):=\Bigg\{v\in L^m_{\textup{loc}}(\hn):\int_\hn\frac{|v|^m}{1+|x|^{Q+\gamma}}dx<+\infty\Bigg\},m>0,\gamma>0
\end{align*}
In the following, we define the nonlocal parabolic tail as
\begin{align*}
    \mbox{Tail}_{p-1}(v;\mathcal{Q}_\tau)=\mbox{Tail}_{p-1}(v;\xi_0,r,\tau)=\Bigg(\fint_{t_0-\tau}^{t_0}r^{sp}\int_{\hn\setminus B_r(\xi_0)}\frac{|v(\eta,t)|^{p-1}}{|\eta-\xi_0|^{Q+sp}}d\eta dt\Bigg)^\frac{1}{p-1}
\end{align*}.
In particular, we write
\begin{align*}
   \mbox{\~{Tail}}_{p-1}(v:\mathcal{Q}_\tau)=\Bigg(\int_{t_0-\tau}^{t_0}\int_{\hn\setminus B_r(\xi_0)}\frac{|v(\eta,t)|^{p-1}}{|\eta-\xi_0|^{Q+sp}}d\eta dt\Bigg)^\frac{1}{p-1}
\end{align*}
We also use $\textup{Tail}^{p-1}(v(t);\rho,\xi_0):=\left( r^{sp}\int_{\hn\setminus B_r(\xi_0)}\frac{|v(\eta,t)|^{p-1}}{|\eta-\xi_0|^{Q+sp}}d\eta\right)^{\frac{1}{p-1}}$ to denote a time-dependent tail. Additionally we also note that a function $v\in L^{p-1}(I;L^{p-1}_{sp}(\hn)$ if
\begin{align*}
    \mbox{Tail}_{p-1}(v;\xi_0,r,I)<+\infty
\end{align*}
We will now remind the definition of a weak solution to the problem (\ref{maineq})
\begin{definition}
    (Weak solution) A function $u\in \hn\times [0,T]\rightarrow  \mathbb{R}$ satisfying
    \begin{equation*}
        u\in L^{p-1}_{\textup{loc}}(0,T;L^{p-1}_{sp}(\hn))\cap L^p_{\textup{loc}}(0,T;W^{s,p}_{\textup{loc}}(\Omega))\cap C_{\textup{loc}}(0,T;L^2_{\textup{loc}}(\Omega))
     \end{equation*}
     is a weak sub(super) solution to the problem (\ref{maineq}) if it satisfies
     \begin{align*}
         &-\int_{T_1}^{T_2}\int_\Omega u\partial_t\varphi\,dt\\
         &\quad+\int_{T_1}^{T_2}\iint_{\hn\times\hn}|u(\xi,t)-u(\eta,t)|^{p-2}(u(\xi,t)-u(\eta,t))(\varphi(\xi,t)-\varphi(\eta,t))\\
         &\qquad\qquad\qquad\qquad\times K(\xi,\eta,t)d\xi\,d\eta\,dt\leq (\geq)\,0
     \end{align*}
     for any $\varphi \in W^{1,2}(T_1,T_2;L^2(\Omega)\cap L^p(T_1,T_2;W^{s,p}(\Omega))$ with spatial compact support contained in $\Omega$ and $[T_1,T_2]\Subset [0,T]$.
\end{definition}
Now we are going to present our main result i.e. local boundedness with optimal tail
\begin{theorem}\label{mainresult}
    Let $p\in \left(\frac{2Q}{Q+2s},\infty\right)$. Let $u$ be a weak subsolution of the problem (\ref{maineq}). Further, we also assume that $u\in L^{p-1}_{\textup{loc}}(0,T;L^{p-1}_{sp}(\hn))$. Then the following estimate holds true.
    \begin{equation}\label{eq1.4*}
\begin{aligned}
    \sup_{\mathcal{Q}_{\sigma\rho}(z_0)} u_+(\xi,t) &\leq \frac{c}{(1-\sigma)^{c_1}}\Bigg[\left(\fiint_{\mathcal{Q}_{\rho}(z_0)} u_+^{p_2}\,d\xi\,dt\right)^{\frac{1}{p_1}} \\
    &\qquad+ \Big[ \fint_{t_0-\rho^{sp}}^{t_0} \left( \fint_{B_\rho(\xi_0)} u_+^{p-1}(\eta,t) \,d\eta\right)^{\frac{p}{p-1}}\,dt \Big]^\frac{\mathcal{B}'(p-1)}{pp_1} + 1 \Bigg]\\
    &\qquad\qquad+ \mathfrak{C}\rho^{-sp}\int_{t_0-\rho^{sp}}^{t_0}\textup{Tail}^{p-1}(u_+(t);\rho,\xi_0)\,dt.
\end{aligned}
\end{equation}
    for any $\mathcal{Q}_\rho(\xi_0)\Subset \Omega_T$ and $\sigma\in (0,1)$ with constants $c=c(\verb|data|)$, $c_1$ chosen in the proof and $\mathfrak{C}$ chosen in the proof of \cref{L6.1} and
    \begin{align}\label{defp1p2B}
        &p_1=\min \Bigg\{2,\frac{p(Q+2s)-2Q}{sp}\Bigg\},\,p_2=\max\{2,p\}\nonumber\\
        &\mathcal{B}=\frac{p+\frac{2s}{Q}-\frac{p-1}{p}(p-2)}{1+\frac{s}{Q}}
    \end{align}
\end{theorem}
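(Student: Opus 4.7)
The plan is to perform a parabolic De Giorgi iteration on a sequence of nested sub-cylinders $\mathcal{Q}_{\rho_j}(z_0)$ with radii $\rho_j = \sigma\rho + 2^{-j}(1-\sigma)\rho$ decreasing to $\sigma\rho$, following the hybrid strategy of \cite{Kassweid2024,BK24}: the nonlocal tail enters both as an additive shift of the truncation level (so that the genuinely non-local Caccioppoli contribution can be absorbed) and, in residual form, as a source term that is handled locally in time. For $k>0$ to be chosen, I set $k_j := (1-2^{-j})k$ and introduce a time-dependent shift $g(t)$ proportional to the pointwise-in-time tail $\textup{Tail}^{p-1}(u_+(t);\rho,\xi_0)^{1/(p-1)}$; the iterated truncation is $w_j := (u - k_j - g(t))_+$. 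The admissible test function is a suitable power of $w_{j+1}$ multiplied by a spatial cutoff $\eta_j$ between $B_{\rho_{j+1}}(\xi_0)$ and $B_{\rho_j}(\xi_0)$, mollified in time using the regularization framework developed in \cref{SectionRegularize}.

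The first main ingredient is the Caccioppoli inequality with a good term from \cref{Caccioppolli}, applied to $w_j$ on $\mathcal{Q}_{\rho_j}(z_0)$. Including $g(t)$ inside the truncation level means that the far-field integral $\int_{\hn\setminus B_{\rho_j}}(u(\eta,t)-k_j-g(t))_+^{p-1}/|\xi_0^{-1}\circ\eta|^{Q+sp}\,d\eta$ arising in the Caccioppoli tail splits into a part absorbed by the definition of $g(t)$ and a residual averaged source, which is precisely the term carrying the averaged local quantity in \eqref{eq1.4*}. Combining this energy estimate with a Gagliardo-Nirenberg interpolation inequality on Heisenberg balls from the appendix, together with the $L^\infty_t L^2_\xi$ control supplied by the parabolic part of Caccioppoli, yields an $L^{p_2}$-bound of $w_{j+1}$ on $\mathcal{Q}_{\rho_{j+1}}$ in terms of a super-linear power of the same quantity on $\mathcal{Q}_{\rho_j}$. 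The exponents in \eqref{defp1p2B} encode exactly this interpolation: $p_1 = \min\{2,(p(Q+2s)-2Q)/(sp)\}$ is the integrability gain (meaningful precisely under the assumption $p>2Q/(Q+2s)$), $p_2$ is the integrability on which the iteration is run, and $\mathcal{B}$ records the scaling exponent produced by the embedding. Writing $Y_j := \fiint_{\mathcal{Q}_{\rho_j}} w_j^{p_2}\,d\xi\,dt$, I expect the natural recursion
\begin{align*}
Y_{j+1} \,\leq\, C\, b^{\,j}\, k^{-\delta}\, Y_j^{\,1+\mu}
\end{align*}
for some $b>1$ and $\delta,\mu>0$.

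The standard geometric-convergence lemma then forces $Y_j \to 0$ once $k$ is chosen large enough that $Y_0 \leq C^{-1/\mu}b^{-1/\mu^2}k^{\delta/\mu}$; solving for the admissible $k$ produces the first two terms of the bracket in \eqref{eq1.4*}, while the $+1$ reflects the non-scale-invariance of the iteration under the shift $k_j + g(t)$. Since $w_j \to (u - k - g(t))_+$ pointwise, the vanishing of $Y_j$ gives $u \leq k + g(t)$ throughout $\mathcal{Q}_{\sigma\rho}(z_0)$, and reinstating the shift $g(t)$ inserts the additive term $\mathfrak{C}\rho^{-sp}\int_{t_0-\rho^{sp}}^{t_0}\textup{Tail}^{p-1}(u_+(t);\rho,\xi_0)\,dt$ with $\mathfrak{C}$ being precisely the constant isolated in \cref{L6.1}.

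The principal technical difficulties will be twofold. First, because $g(t)$ is only as regular in $t$ as a rough tail average allows, the time-mollified test-function procedure from \cref{SectionRegularize} must be carried through carefully before one can pass to the limit and quote the Caccioppoli inequality with good term in the form of \cref{Caccioppolli}; keeping track of the shift under mollification is the most delicate point of the energy step. Second, every application of Gagliardo-Nirenberg and the parabolic embedding must be carried out using the homogeneous norm $|\eta^{-1}\circ\xi|$, $\hn$-balls and the appropriate interpolation range, not their Euclidean analogues, which is why the full-strength interpolation inequalities proved in the appendix are indispensable. Once these two ingredients are secured, the De Giorgi iteration itself and the bookkeeping of the powers of $(1-\sigma)^{-1}$ producing the exponent $c_1$ follow by standard arguments.
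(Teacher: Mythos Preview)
Your overall architecture is the paper's: De Giorgi iteration on shrinking cylinders, with the far-field tail absorbed into a time-dependent shift $g(t)$ of the truncation level so that the nonlocal Caccioppoli contribution cancels and only a local-average source survives. Two points, however, need correction.

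First, your description of $g(t)$ as ``proportional to the pointwise-in-time tail $\textup{Tail}^{p-1}(u_+(t);\rho,\xi_0)^{1/(p-1)}$'' is not what is needed. The paper takes $g(t)=\mathfrak{C}R^{-sp}\int_{t_0-\theta_2}^{t}\textup{Tail}^{p-1}(u_+(s);R,\xi_0)\,ds$, i.e.\ the \emph{primitive} of the tail. The cancellation in the Caccioppoli estimate (the $D_3$ term in \cref{L4.6}) comes from $\partial_t g(t)$ matching the tail; a pointwise shift would not produce this derivative. Your final paragraph does recover the correct integral, so this may be loose wording, but the mechanism depends on it.

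Second, and more seriously, the single-sequence recursion $Y_{j+1}\le Cb^{j}k^{-\delta}Y_j^{1+\mu}$ you expect does not follow from the Caccioppoli inequality you invoke. The refined estimate \cref{L6.1} contains, in addition to the usual $L^p$ and $L^2$ terms, a source of the form
\[
k^{p_1}\Bigl(\tfrac{R}{R-\rho}\Bigr)^{\mathcal{B}'(Q+sp)}R^{-s}\Bigl(\int_{t_0-R^{sp}}^{t_0}\bigl|\mathcal{A}^g_+(t;\xi_0,R,k)\bigr|^p\,dt\Bigr)^{1/p},
\]
an $L^p$-in-time norm of the super-level-set measure. Chebyshev in space controls only the $L^1$-in-time quantity $\Pi_j$ by $Y_j$ (cf.\ \eqref{eq6.13}); the $L^p$-in-time quantity forces a second sequence $Z_j=\bigl(\fint(|\mathcal{A}^g_+|/|B_{\rho_j}|)^p\,dt\bigr)^{1/(p(1+\kappa))}$. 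The paper therefore runs the \emph{coupled} iteration
\[
Y_{j+1}\lesssim Y_j^{1+\delta}+Z_j^{1+\kappa}Y_j^{\delta},\qquad Z_{j+1}\lesssim Y_j+Z_j^{1+\kappa},
\]
closed via the two-sequence lemma \cref{iterlemma}. The bound on $Z_{j+1}$ comes from applying \cref{T4.10} with $(r,m)=(\hat q,p\hat q)$ as in \eqref{eq6.2}--\eqref{eq6.3}; this is where $\mathcal{B}$ genuinely enters, and separate initial estimates for $Y_0$ and $Z_0$ are required. Without the second sequence the source term cannot be absorbed and your iteration does not close.
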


    \subsection{Auxilliary lemmas}

    	We collect here some important estimates which will be used multiple times in the remaining article.

	\begin{lemma}\cite{AdiMali2018}
    \label{lem:annul}
		Let $\gamma>0$ and let $|\cdot|_{\hn}$ be the standard homogeneous norm on $\hn$. Then
		\[\int_{\hn\setminus B(\xi_0,r)}\frac{1}{\dhn{\xi}{\xi_0}^{Q+\gamma}}\,d\xi\leq c(N,\gamma)r^{-\gamma}.\]
	\end{lemma}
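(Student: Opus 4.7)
The plan is to reduce to the case $\xi_0=0$ using left-invariance of the Haar (Lebesgue) measure on $\hn$, and then to exploit the homogeneous structure via polar coordinates adapted to the dilations $\Phi_\lambda$.

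First, I would perform the change of variables $\xi = \xi_0\circ\zeta$. Since Lebesgue measure is left Haar on $\hn$, one has $d\xi = d\zeta$, and
\[
\dhn{\xi}{\xi_0} = |\xi_0^{-1}\circ(\xi_0\circ\zeta)| = |\zeta|,
\]
so the region $\hn\setminus B(\xi_0,r)$ corresponds exactly to $\hn\setminus B(0,r)$. It therefore suffices to bound $\int_{\hn\setminus B(0,r)}|\zeta|^{-(Q+\gamma)}\,d\zeta$.

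Next, I would invoke the polar decomposition on the homogeneous group $\hn$: since the dilation $\Phi_\lambda$ scales Lebesgue measure by $\lambda^Q$, a standard argument (Folland--Stein) produces a finite Radon measure $\mu$ on the unit sphere $\Sigma=\{\omega\in\hn:|\omega|=1\}$ such that for every nonnegative measurable $f$,
\[
\int_{\hn}f(\zeta)\,d\zeta = \int_0^\infty\int_{\Sigma}f(\Phi_\rho(\omega))\,\rho^{Q-1}\,d\mu(\omega)\,d\rho.
\]
Applying this to $f(\zeta)=\chi_{\{|\zeta|>r\}}|\zeta|^{-(Q+\gamma)}$ and using $|\Phi_\rho(\omega)|=\rho$ on $\Sigma$ gives
\[
\int_{\hn\setminus B(0,r)}\frac{d\zeta}{|\zeta|^{Q+\gamma}} = \mu(\Sigma)\int_r^\infty\rho^{-1-\gamma}\,d\rho = \frac{\mu(\Sigma)}{\gamma}\,r^{-\gamma},
\]
which is the claim with $c(N,\gamma)=\mu(\Sigma)/\gamma$.

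There is no real obstacle here; the only nontrivial input is the polar decomposition, and the finiteness of $\mu(\Sigma)$ follows from the fact that $\{\omega:1\leq|\omega|\leq 2\}$ has finite measure together with the homogeneity. If one preferred a proof not invoking polar coordinates, a dyadic annular decomposition works equally well: writing $\hn\setminus B(0,r)=\bigsqcup_{k\geq 0}\bigl(B(0,2^{k+1}r)\setminus B(0,2^{k}r)\bigr)$, using $|\zeta|^{-(Q+\gamma)}\leq (2^{k}r)^{-(Q+\gamma)}$ on the $k$-th annulus together with the doubling bound $|B(0,2^{k+1}r)|\leq C(2^{k}r)^{Q}$, and summing the geometric series $\sum_k 2^{-k\gamma}=(1-2^{-\gamma})^{-1}$, again yields the estimate with a constant of the form $c(N,\gamma)=C(N)/(1-2^{-\gamma})$.
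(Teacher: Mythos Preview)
Your proof is correct. The paper does not supply its own proof of this lemma; it simply cites \cite{AdiMali2018} and moves on. Your argument---reducing to $\xi_0=0$ by left-invariance of the Haar measure and then evaluating the radial integral via the polar decomposition for homogeneous groups (or, alternatively, your dyadic annular sum)---is exactly the standard route and is what the cited reference uses as well (see also \cite[Proposition~5.4.4]{Bonfig2007}, which the present paper invokes elsewhere for the same polar-coordinate computation). There is nothing to add.
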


   We include below a version of Sobolev-Poincar\'e inequality on the Heisenberg group. We omit the proof since it is entirely similar to that in \cite{Mingione2003}.
    \begin{theorem}
        \label{sobolpoinc}
        Let $B\subset \hn$ be a ball and let $1\leq p<\infty$ and $0<s<1$. Then for all $u\in W^{s,p}(B)$,
        \begin{align*}
            \int_B|u(\xi)-u_B|^p\,d\xi \leq |\textup{diam}(B)|^{sp}\intgaglifrac{B},
        \end{align*} where $f_B:=\fint_B f(\xi)\,d\xi$.
    \end{theorem}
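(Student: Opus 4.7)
The plan is to give a direct proof along the classical lines for fractional Poincar\'e-type inequalities, exploiting only Jensen's inequality and the observation that the homogeneous norm on $\hn$ is bounded on the ball $B$ by $\textup{diam}(B)$. The key structural fact I will use is that the Haar measure on $\hn$ scales as the $Q$-th power of the radius, so $|B| \sim \textup{diam}(B)^Q$ with a dimensional constant.

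First, I rewrite the mean-oscillation via the definition of $u_B$ and apply Jensen's inequality: for each $\xi \in B$,
\[
|u(\xi)-u_B|^p = \Bigl|\fint_B (u(\xi)-u(\eta))\,d\eta\Bigr|^p \leq \fint_B |u(\xi)-u(\eta)|^p\,d\eta.
\]
Integrating over $\xi \in B$ yields
\[
\int_B |u(\xi)-u_B|^p\,d\xi \leq \frac{1}{|B|}\int_B\int_B |u(\xi)-u(\eta)|^p\,d\eta\,d\xi.
\]

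Second, I introduce the Gagliardo kernel artificially: since $\xi,\eta \in B$ implies $|\eta^{-1}\circ\xi| \leq \textup{diam}(B)$ (by definition of diameter with respect to the left-invariant homogeneous norm), we have
\[
|u(\xi)-u(\eta)|^p \leq |\textup{diam}(B)|^{Q+sp}\,\gaglifrac.
\]
Plugging this into the previous display and using $|B| \geq c(Q)\,|\textup{diam}(B)|^Q$ coming from the doubling/homogeneity of Haar measure gives
\[
\int_B |u(\xi)-u_B|^p\,d\xi \leq c(Q)\,|\textup{diam}(B)|^{sp}\intgaglifrac{B},
\]
which is the claim (the dimensional constant being absorbed in the statement as in \cite{Mingione2003}).

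There is really no hard step here; everything reduces to Jensen plus the pointwise bound on the pseudo-distance on $B$. The only mild subtlety compared to the Euclidean proof is to make sure the left-invariant nature of $|\eta^{-1}\circ\xi|$ is compatible with the definition of $\textup{diam}(B)$ we are using; this follows from the fact that, under the standard homogeneous norm, $\hn$ is a metric space (as noted in the remark following the pseudo-triangle lemma), so $|\eta^{-1}\circ\xi|\leq |\eta^{-1}\circ\xi_0|+|\xi_0^{-1}\circ\xi|\leq 2R$ on a ball $B_R(\xi_0)$, giving the uniform bound required. This is why the authors skip the proof and only cite \cite{Mingione2003}.
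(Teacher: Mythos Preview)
Your argument is correct and is exactly the standard two-line proof the paper has in mind when it writes ``We omit the proof since it is entirely similar to that in \cite{Mingione2003}.'' The paper gives no proof of its own, so there is nothing further to compare; your observation that the stated inequality carries an implicit dimensional constant $c(Q)$ (coming from $|B|\sim|\textup{diam}(B)|^Q$) is also accurate and matches how such inequalities are recorded in the cited reference.
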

    \begin{theorem}(\cite[Theorem 1.1]{AdiMali2018})\label{sobolheisen}
        Let $s\in (0,1)$ and $sp<Q$. Then there exists a constant $C=C(Q,p,s)$ such that
        \begin{align}
            \|u\|_{L^{p^*}(\hn)}\leq C[u]_{W^{s,p}(\hn)} 
        \end{align} for all $u\in W^{s,p}(\hn)$ and $p^*=\frac{Qp}{Q-sp}$.
    \end{theorem}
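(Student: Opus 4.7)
My approach is to adapt the standard Maz'ya-style truncation proof of the fractional Sobolev embedding to the Heisenberg group, leveraging only the $Q$-Ahlfors regularity of the Haar measure and the doubling property, both of which hold on $\hn$. By density of $C_c^\infty(\hn)$ in $W^{s,p}(\hn)$ and the pointwise inequality $\bigl||u(\xi)|-|u(\eta)|\bigr|\leq |u(\xi)-u(\eta)|$, which bounds the Gagliardo seminorm of $|u|$ by that of $u$, it suffices to establish the estimate for smooth, nonnegative, compactly supported $u$.

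For each $k\in\mathbb{Z}$ set $A_k:=\{\xi:u(\xi)>2^k\}$ and $a_k:=|A_k|$. The layer-cake formula gives
\[
\|u\|_{L^{p^*}(\hn)}^{p^*} \leq c\sum_{k\in\mathbb{Z}} 2^{kp^*}(a_k - a_{k+1}).
\]
The key observation is that whenever $\xi\in A_{k+1}$ and $\eta\in\hn\setminus A_k$, one has $|u(\xi)-u(\eta)|\geq 2^k$, producing the pointwise bilinear estimate
\[
2^{kp}\int_{A_{k+1}}\int_{\hn\setminus A_k}\frac{d\xi\,d\eta}{|\eta^{-1}\circ\xi|^{Q+sp}}\;\leq\;\int_{A_{k+1}}\int_{\hn\setminus A_k}\gaglifrac\, d\xi\, d\eta.
\]

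The heart of the argument is an isoperimetric-type lower bound for the kernel integral on the left. For each $\xi\in A_{k+1}$, choose a radius $r_\xi$ so that $B(\xi,r_\xi)$ meets $\hn\setminus A_k$ in a definite fraction of its measure; \cref{lem:annul} together with doubling then gives
\[
\int_{\hn\setminus A_k}\frac{d\eta}{|\eta^{-1}\circ\xi|^{Q+sp}}\;\geq\;c\,r_\xi^{-sp},
\]
and optimizing $r_\xi$ in terms of $a_{k+1}$ and $a_k-a_{k+1}$ yields, after integration in $\xi\in A_{k+1}$, a Chebyshev-type bound of the form $2^k a_{k+1}^{1/p^*}\leq C\,[u]_{W^{s,p}(\hn)}$. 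Applied to the dyadic truncations $u_j:=\min((u-2^j)_+,2^j)$ and summed in $j$, using the bounded-overlap identity $\sum_j [u_j]_{W^{s,p}(\hn)}^p\leq C\,[u]_{W^{s,p}(\hn)}^p$ (which comes from a direct pointwise computation of $|u_j(\xi)-u_j(\eta)|$ separating the strips in which $u(\xi)$ and $u(\eta)$ lie), this weak-type estimate upgrades to the strong $L^{p^*}$ bound via the power-mean inequality.

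The main obstacle is the isoperimetric lower bound above. In the Euclidean setting it is routinely obtained from Riesz rearrangement, a tool that is not available on $\hn$; instead one must argue exclusively from the $Q$-Ahlfors regularity and the doubling property of Haar measure, implemented through a careful ball-covering argument that tracks both level-set measures $a_k$ and $a_{k+1}$ simultaneously. The remaining ingredients — the density reduction, the layer-cake decomposition, and the Maz'ya summation — are insensitive to the underlying group structure and transfer essentially verbatim from the Euclidean setting.
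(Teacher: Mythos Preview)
The paper does not supply a proof of this theorem; it is quoted from \cite{AdiMali2018} and used as a black box. There is therefore nothing in the paper to compare your proposal against.

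For what it is worth, your outline is the standard Maz'ya truncation route, and it does go through on $\hn$ using only the $Q$-Ahlfors regularity of the Haar measure --- precisely the structural input you isolate. One imprecision is worth flagging: the ball-choice step you describe (pick $r_\xi$ so that $B(\xi,r_\xi)$ meets $\hn\setminus A_k$ in a definite fraction of its measure) naturally produces a bound of the shape
\[
2^{kp}\,a_{k+1}\,a_k^{-sp/Q}\;\leq\;C\,[u_k]_{W^{s,p}(\hn)}^p,
\]
with $a_k$ rather than $a_{k+1}$ in the exponent, not the clean weak-type estimate $2^k a_{k+1}^{1/p^*}\leq C[u]_{W^{s,p}}$ that you state. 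The discrepancy between $a_k$ and $a_{k+1}$ is not removed by ``optimizing $r_\xi$''; it is absorbed later in the dyadic summation, either via the resummation trick of Di~Nezza--Palatucci--Valdinoci or by splitting on whether $a_{k+1}\leq\tfrac12 a_k$. With that adjustment, the argument is sound.
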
    

    The following theorem is proved in the appendix as \cref{gaginter}. 
    
    \begin{theorem}
        Let $B\subset\hn$ be a ball of radius $1$. Let $1\leq p\leq\infty$, and $r> q\geq1$ satisfy 
        \begin{align*}
            s-\frac{Q}p > -\frac{Q}{r},
        \end{align*}
        let $\theta\in (0,1)$ be such that
        \begin{align*}
            \theta\left(\frac1p-\frac{s}{Q}\right)+\frac{1-\theta}{q}=\frac1r.
        \end{align*}
        Then there exists a constant $c=c(Q,s,p,q,\theta)>0$ such that 
        \begin{align*}
            \|u\|_{L^r(B)}\leq c\|u\|^{1-\theta}_{L^q(B)}\|u\|^\theta_{W^{s,p}(B)}
        \end{align*} for every $u\in L^q(B)\cap W^{s,p}(B)$.
    \end{theorem}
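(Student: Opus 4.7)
The strategy is to reduce the ball-version of the inequality to an analogous inequality on the whole group $\hn$ via extension, and then to interpolate the $L^r$ norm between $L^q$ and the critical Sobolev exponent $L^{p^\ast}$ using H\"older plus \cref{sobolheisen}.

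\emph{Step 1: Extension to $\hn$.} By the extension theorem for fractional Sobolev spaces on domains in $\hn$ (announced in the introduction as a contribution of this paper), produce $\tilde u\in W^{s,p}(\hn)\cap L^q(\hn)$ with $\tilde u|_B=u$ and
\[
\|\tilde u\|_{L^q(\hn)}\le c\,\|u\|_{L^q(B)},\qquad \|\tilde u\|_{W^{s,p}(\hn)}\le c\,\|u\|_{W^{s,p}(B)}.
\]
It then suffices to prove the interpolation inequality on all of $\hn$.

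\emph{Step 2: Subcritical case $sp<Q$.} In this range the hypothesis $s-Q/p>-Q/r$ reads $r<p^\ast:=Qp/(Q-sp)$, and the identity fixing $\theta$ becomes exactly the convex interpolation relation $\tfrac1r=\tfrac{\theta}{p^\ast}+\tfrac{1-\theta}{q}$ with $q<r<p^\ast$. H\"older's inequality then gives
\[
\|\tilde u\|_{L^r(\hn)}\le \|\tilde u\|_{L^{p^\ast}(\hn)}^{\theta}\,\|\tilde u\|_{L^q(\hn)}^{1-\theta},
\]
and \cref{sobolheisen} bounds $\|\tilde u\|_{L^{p^\ast}(\hn)}\le C[\tilde u]_{W^{s,p}(\hn)}\le C\|u\|_{W^{s,p}(B)}$. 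Combining this with the extension bounds for $\|\tilde u\|_{L^q(\hn)}$ and restricting back to $B$ yields the claim.

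\emph{Step 3: Critical and supercritical cases $sp\ge Q$.} The exponent $p^\ast$ is no longer available, so choose $s'<s$ with $s'p<Q$ and $r<Qp/(Q-s'p)$; such $s'$ exists for any finite admissible $r$. Apply Step 2 with $s'$ in place of $s$; the seminorm $[\tilde u]_{W^{s',p}(\hn)}$ is then controlled in terms of $\|\tilde u\|_{L^p(\hn)}+[\tilde u]_{W^{s,p}(\hn)}$ by splitting the Gagliardo double integral across $|\eta^{-1}\circ\xi|\lessgtr 1$ and applying \cref{lem:annul} on the far region. Tracking the resulting exponents through the balance identity then produces the desired inequality (with $s$, not $s'$), provided one adjusts $\theta$ accordingly and exploits that $\|u\|_{W^{s,p}(B)}$ dominates both the $L^p$ and the $W^{s,p}$ seminorm of the extension.

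\emph{Main obstacle.} The principal difficulty is Step 1: constructing an extension operator that simultaneously controls the $L^q$ norm and the Gagliardo $W^{s,p}$ seminorm when the domain is a Heisenberg ball, since classical reflection across a flat boundary is not available in this non-abelian setting and the standard Whitney decomposition must be adapted to the Carnot--Carath\'eodory geometry. A secondary but real issue is the bookkeeping in Step 3 when $sp$ is at or past the critical threshold: one must verify that the dependence on the auxiliary parameter $s'$ does not degenerate in the constant so that the balance exponent $\theta$ in the final estimate matches the one dictated by $s$ itself.
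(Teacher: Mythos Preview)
Your Steps 1 and 2 are correct and match the paper's approach: the paper also proves the full-space inequality first and then transfers it to the ball via the extension theorem (\cref{extendthm}), and in the subcritical range $sp<Q$ the argument is precisely H\"older between $L^q$ and $L^{p^\ast}$ followed by \cref{sobolheisen}.

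Step 3, however, has a genuine gap. If you replace $s$ by $s'<s$ with $s'p<Q$ and run Step 2, the balance identity forces the exponent
\[
\theta'=\frac{\tfrac1q-\tfrac1r}{\tfrac1q-\tfrac1p+\tfrac{s'}{Q}},
\]
which is strictly larger than the target $\theta$ (since $s'<s$ shrinks the denominator). Bounding $[\tilde u]_{W^{s',p}}$ by $\|\tilde u\|_{W^{s,p}}$ then yields
\[
\|u\|_{L^r(B)}\le c\,\|u\|_{L^q(B)}^{1-\theta'}\|u\|_{W^{s,p}(B)}^{\theta'},
\]
and converting this to the exponent $\theta$ would require $\|u\|_{W^{s,p}(B)}\lesssim\|u\|_{L^q(B)}$, which is false in general. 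Your suggestion to optimize over a splitting scale $R$ introduces an $L^p$ term rather than $L^q$, so it does not close the gap either; the ``bookkeeping'' you flag as secondary is in fact the obstruction.

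The paper handles the two remaining regimes by entirely different means, each of which lands on the correct $\theta$ directly. For $sp=Q$ it proves a BMO interpolation inequality (\cref{bmointerpol}, via John--Nirenberg and Calder\'on--Zygmund decomposition) giving $\|u\|_{L^r}\lesssim\|u\|_{L^q}^{q/r}[u]_{\mathrm{BMO}}^{1-q/r}$, and then uses the embedding $W^{Q/p,p}\hookrightarrow\mathrm{BMO}$ (\cref{bmoembedsobol}); since $1/p-s/Q=0$ here, the balance identity indeed gives $\theta=1-q/r$. For $sp>Q$ it interpolates $L^r$ between $L^q$ and $C^{0,\alpha}$ with $\alpha=s-Q/p$ (\cref{interlope1}) and invokes the Morrey embedding (\cref{morreyheisen}); one checks that the resulting exponent again matches the balance identity.
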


	\subsubsection{Parabolic Inequalities} In this section, we introduce the parabolic solution space $V_s^{p,q}(B_R\times I):=L^p(I;W^{s,p}(B_R))\cap L^\infty(I;L^q(B_R))$ for $q>1$ and we define 
    \begin{align*}
        \|f\|_{V^{p,q}_s(B_R\times I)}:=\left[\int_I[f(\cdot,t)]^p_{W^{s,p}(B_R)}\,dt+\sup_{t\in I}\|f(\cdot,t)\|_{L^q(B_R)}^q\right]^{\frac1p}
    \end{align*}

  We will prove the following parabolic Sobolev inequality.
    \begin{theorem}\label{T4.10}
        Suppose $f\in V_s^{p,q}(B_R\times I)$ with $1<q\leq \max\{p,2\}$. Suppose $r$ and $m$ are positive numbers satisfying 
        \begin{align}\label{indexsetup2}
            \frac1r + \frac1m\left(\frac{sp}{Q}+\frac{p}{q}-1\right)=\frac1q
        \end{align} with admissible ranges
        \begin{align}
            \begin{cases}
                 r\in \left[q,\frac{Qp}{Q-sp}\right] ,& m\in [p,\infty),\mbox{ if }Q>sp,\\
                 r\in [q,\infty) ,& m\in \left(\frac{qsp}{Q}+p-q,\infty\right) \mbox{ if }Q=sp,\\
                 r\in [q,\infty) ,& m\in \left[ \frac{qsp}{Q}+p-q,\infty \right) \mbox{ if }Q<sp.
            \end{cases} 
        \end{align}Then, there is a constant $c$ depending on the data such that
        \begin{align}\label{parabolsobol}
        	\|f\|^{\frac{Qrm}{spr+Qm}}_{L^{r,m}(B_R\times I)} \leq c\left( \int_I[f(\cdot,t)]^p_{W^{s,p}(B_R)}\,dt + R^{-sp} \|f\|^p_{L^p(B_R)} +\sup_{t\in I} \|u(\cdot,t)\|^q_{L^q(B_R)}\right),
        \end{align} where
        \begin{align*}
        	\|f\|_{L^{r,m}(B_R\times I)}:=\left(\int_I \|f(\cdot,t)\|^m_{L^r(B_R)}\right)^{\frac{1}{m}}.
        \end{align*}
    \end{theorem}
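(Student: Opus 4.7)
The strategy is to apply the Gagliardo--Nirenberg interpolation inequality of \cref{gaginter} pointwise in time with the special choice $\theta = p/m$, then to raise to the power $m$ and integrate in time, finally bundling the three pieces of the right-hand side together via an elementary Young-type bound. With $\theta = p/m$, which lies in $[0,1]$ precisely because $m \geq p$ holds throughout the three admissible ranges in the hypothesis, the interpolation relation $\theta(\tfrac{1}{p} - \tfrac{s}{Q}) + \tfrac{1-\theta}{q} = \tfrac{1}{r}$ from \cref{gaginter} becomes exactly the assumed identity \eqref{indexsetup2}, so the admissible ranges listed in the hypothesis are precisely those that make the interpolation legitimate.

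Since \cref{gaginter} is stated for the unit ball, the first technical step is to rescale it to $B_R$ via the Heisenberg dilation $\Phi_R$. Setting $\tilde f(\xi,t) := f(\Phi_R\xi,t)$, a change of variables together with the homomorphism property $|(\Phi_R\eta)^{-1}\circ(\Phi_R\xi)| = R|\eta^{-1}\circ\xi|$ gives $\|\tilde f(\cdot,t)\|_{L^\alpha(B_1)} = R^{-Q/\alpha}\|f(\cdot,t)\|_{L^\alpha(B_R)}$ for $\alpha\in\{p,q,r\}$ and $[\tilde f(\cdot,t)]_{W^{s,p}(B_1)} = R^{s-Q/p}[f(\cdot,t)]_{W^{s,p}(B_R)}$. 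Applying \cref{gaginter} to $\tilde f(\cdot,t)$, splitting the full Sobolev norm as $\|\tilde f\|_{W^{s,p}(B_1)}\leq c([\tilde f]_{W^{s,p}(B_1)} + \|\tilde f\|_{L^p(B_1)})$, and translating back, the power of $R$ accompanying the seminorm term cancels as a direct consequence of the interpolation identity, while the $L^p$ term produces a factor $R^{-\theta s}$. Raising to the $m$-th power and using $m\theta = p$ then yields the pointwise-in-time estimate
\begin{equation*}
    \|f(\cdot,t)\|^m_{L^r(B_R)} \leq c\,\|f(\cdot,t)\|^{m-p}_{L^q(B_R)}\Bigl([f(\cdot,t)]^p_{W^{s,p}(B_R)} + R^{-sp}\|f(\cdot,t)\|^p_{L^p(B_R)}\Bigr).
\end{equation*}

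Integrating over $I$, pulling the $L^q$ factor outside in $\sup$-form, and denoting the full right-hand side of \eqref{parabolsobol} by $X$, one has both $\int_I[f(\cdot,t)]^p_{W^{s,p}(B_R)}\,dt + R^{-sp}\int_I\|f(\cdot,t)\|^p_{L^p(B_R)}\,dt \leq X$ and $\sup_t\|f(\cdot,t)\|^{m-p}_{L^q(B_R)}\leq X^{(m-p)/q}$, the latter using $m\geq p$. Combining these gives $\|f\|^m_{L^{r,m}(B_R\times I)} \leq c X^{(m-p+q)/q}$, and raising to the power $q/(m-p+q)$ produces $\|f\|^{mq/(m-p+q)}_{L^{r,m}(B_R\times I)} \leq c X$; a short algebraic manipulation starting from \eqref{indexsetup2} verifies $\tfrac{mq}{m-p+q} = \tfrac{Qrm}{spr+Qm}$, which is the exponent asserted in \eqref{parabolsobol}. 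I expect the main obstacle to be the careful bookkeeping in the scaling step, ensuring that the $R$-dependence in the seminorm term cancels exactly while the $L^p$ term picks up the correct factor $R^{-sp}$, together with verifying that the endpoint cases ($\theta=0$, i.e.\ $r=q$, where the inequality is trivial, and $\theta=1$, i.e.\ $m=p$ with $r=Qp/(Q-sp)$ when $sp<Q$, which reduces to the Sobolev embedding \cref{sobolheisen}) are covered by the same scheme.
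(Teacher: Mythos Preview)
Your proposal is correct and follows essentially the same approach as the paper: both choose $\theta = p/m$, apply a rescaled version of \cref{gaginter} pointwise in time, integrate, and then combine the factors. The only cosmetic difference is that the paper uses Young's inequality to split the product $A^{\theta/p}B^{(1-\theta)/q}$ into a sum, whereas you bound each factor directly by a power of the right-hand side $X$; these are equivalent maneuvers leading to the same exponent $\tfrac{mq}{m-p+q} = \tfrac{Qrm}{spr+Qm}$.
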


    \begin{proof}
    	If $(r,m)=\left(\frac{Qp}{Q-sp},p\right)$ for $Q>sp$ then the result follows from \cref{sobolheisen2}. We now take $p<m<\infty$. Observe that $\theta=p/m\in (0,1)$ satisfies \cref{indexsetup1} due to \cref{indexsetup2}. By a scaled version of \cref{gaginter} and Young's inequality, we have
    	\begin{align*}
    		&\left(\int_I\|f(\cdot,t)\|_{L^r(B_R)}^m\,dt\right)^{\frac1m}\\
    		 &\qquad \leq c\left(\int_I\{\left([f(\cdot,t)]_{W^{s,p}(B_R)}+R^{-s}\|f(\cdot,t)\|_{L^p(B_R)}\right)^\theta\|f(\cdot,t)\|_{L^q(B_R)}^{1-\theta}\}^{m}\,dt\right)^{\frac1m}\\
    		&\qquad\leq c\left(\int_I[f(\cdot,t)]^p_{W^{s,p}(B_R)}+R^{-sp}\|f(\cdot,t)\|^p_{L^p(B_R)}\,dt\right)^{\frac1m}\left(\sup_{t\in I}\int_{B_R}|f(\xi,t)|^q\,d\xi\right)^{\frac{1-\theta}{q}}\\
    		&\qquad\leq c \left(\int_I[f(\cdot,t)]^p_{W^{s,p}(B_R)}+R^{-sp}\|f(\cdot,t)\|^p_{L^p(B_R)}\,dt\right)^{\frac{\theta}{p}+\frac{1-\theta}{q}} + c \left(\sup_{t\in I}\int_{B_R}|f(\xi,t)|^q\,d\xi\right)^{\frac{\theta}{p}+\frac{1-\theta}{q}}
    	\end{align*}
   	Since $\frac{\theta}{p} + \frac{1-\theta}{q} = \frac1r + \frac{\theta s}{Q} = \frac1r + \frac{sp}{mQ} = \frac{spr+Qm}{Qrm}$, we obtain \cref{parabolsobol} after raising to the power $\frac{Qrm}{spr+Qm}$ in the last display.
    \end{proof}
We are now going to state a lemma regarding the fractional seminorm of truncated functions.
\begin{lemma}
    Let $v\in W^{s,p}(B_R)$. Then for any $0\leq b\leq a$
    \begin{align*}
        [(v-b)_-]^p_{W^{s,p}(B_R)}\leq [(v-a)_-]^p_{W^{s,p}(B_R)}
    \end{align*}
\end{lemma}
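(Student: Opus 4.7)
The plan is to reduce the statement to a pointwise inequality on the integrand of the Gagliardo seminorm and then integrate. Concretely, I would prove that for every $\xi,\eta\in B_R$ and every $0\le b\le a$,
\[
|(v-b)_-(\xi) - (v-b)_-(\eta)| \le |(v-a)_-(\xi) - (v-a)_-(\eta)|.
\]
Once this pointwise bound is available, raising to the $p$-th power, dividing by $|\eta^{-1}\circ\xi|^{Q+sp}$, and integrating over $B_R\times B_R$ (which is permissible since $v\in W^{s,p}(B_R)$ and truncations do not increase the integrand) yields the claim.

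To prove the pointwise bound, fix $\xi,\eta$ and set $\alpha:=v(\xi)$, $\beta:=v(\eta)$. By the symmetry of the absolute value I may assume $\alpha\le\beta$. Writing $(w)_-=\max(-w,0)$, one has $(v-c)_-(\xi)=\max(c-\alpha,0)$ and $(v-c)_-(\eta)=\max(c-\beta,0)$, so since $c\mapsto\max(c-x,0)$ is nondecreasing in $c$, the quantity
\[
F(c) \;:=\; (v-c)_-(\xi) - (v-c)_-(\eta) \;=\; \max(c-\alpha,0) - \max(c-\beta,0)
\]
is nonnegative for $c\ge 0$. The task therefore reduces to verifying that $F$ is nondecreasing in $c$. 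A direct case analysis in the three regions $c\le\alpha$, $\alpha\le c\le\beta$, $c\ge\beta$ gives
\[
F(c)=\begin{cases} 0, & c\le\alpha,\\ c-\alpha, & \alpha\le c\le\beta,\\ \beta-\alpha, & c\ge\beta,\end{cases}
\]
which is clearly nondecreasing and continuous, so $F(b)\le F(a)$ whenever $0\le b\le a$. This is the pointwise inequality above.

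There is essentially no main obstacle: the lemma is an elementary monotonicity fact, and the only care needed is to carry out the three-case analysis cleanly so as to cover all relative orderings of $v(\xi)$ and $v(\eta)$ with respect to $b$ and $a$. The $p$-power and the kernel $|\eta^{-1}\circ\xi|^{-(Q+sp)}$ play no role in the argument beyond being preserved under the pointwise bound.
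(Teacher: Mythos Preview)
Your argument is correct: the pointwise monotonicity of $c\mapsto |(v-c)_-(\xi)-(v-c)_-(\eta)|$ via the three-case computation is clean and complete, and integrating against the kernel gives the seminorm inequality immediately. The paper does not actually supply its own proof here---it simply cites \cite[Lemma~2.3]{BK24}---so your self-contained elementary argument is a genuine addition rather than a variation on anything in the text.
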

\begin{proof}
    For proof, see \cite[Lemma 2.3]{BK24}
\end{proof}
We end this subsection with a technical lemma which will be used in a sequel to deal with iterative inequalities
\begin{lemma} \label{iterlemma}
    \cite[Chapter 1, Lemma 4.2]{DiB12} Let $M,b>1$ and $\kappa,\delta>0$ be given. For each $n\in\mathbb{N}$, we assume that
    \begin{align*}
        Y_{n+1}\leq Mb^n\big(Y_n^{1+\delta}+Z_n^{1+\kappa}Y_h^\delta\big)\,\mbox{ and }\, Z_{n+1}\leq Mb^n\big(Y_n+Z_n^{1+\kappa}\big)
    \end{align*}
    Further assume that $Y_0+Z_0^{1+\kappa}\leq (2M)^{-\frac{1+\kappa}{\zeta}}b^{-\frac{1+\kappa}{\zeta^2}}$ for $\zeta=\min\{\kappa,\delta\}$. Then we have
    \begin{align*}
        \lim_{n\rightarrow\infty}Y_n=\lim_{n\rightarrow\infty}Z_n=0\\
    \end{align*}
\end{lemma}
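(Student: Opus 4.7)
The plan is to prove by strong induction on $n$ the dual geometric bound
\[ Y_n \leq C\sigma^n, \qquad Z_n^{1+\kappa} \leq C\sigma^n, \]
for a single constant $C>0$ and a single decay rate $\sigma\in(0,1)$. Once this is established the conclusion $Y_n,Z_n\to 0$ is immediate. The base case $n=0$ will follow from the hypothesis on $Y_0+Z_0^{1+\kappa}$ provided $C$ is at least this quantity; the bulk of the work is to exhibit a compatible choice of $(\sigma,C)$ for the inductive step.

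To identify $\sigma$ and $C$, insert the induction hypothesis into each recursion. The first one gives (interpreting the typo $Y_h^\delta$ as $Y_n^\delta$)
\[ Y_{n+1}\leq Mb^n\bigl((C\sigma^n)^{1+\delta}+C\sigma^n(C\sigma^n)^\delta\bigr)=2MC^{1+\delta}(b\sigma^\delta)^n\sigma^n, \]
which fits into $C\sigma^{n+1}$ provided $b\sigma^\delta\leq 1$ and $2MC^\delta\leq\sigma$. The second recursion yields $Z_{n+1}\leq 2MC(b\sigma)^n$, so
\[ Z_{n+1}^{1+\kappa}\leq (2MC)^{1+\kappa}(b\sigma)^{n(1+\kappa)}, \]
and this fits into $C\sigma^{n+1}$ provided $b^{1+\kappa}\sigma^\kappa\leq 1$ and $(2M)^{1+\kappa}C^\kappa\leq\sigma$. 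I would then take $\sigma:=b^{-(1+\kappa)/\zeta}$, which satisfies both $b$-conditions because $(1+\kappa)/\zeta\geq 1/\delta$ and $(1+\kappa)/\zeta\geq(1+\kappa)/\kappa$ by the definition $\zeta=\min\{\kappa,\delta\}$.

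With this $\sigma$ fixed, the two smallness requirements on $C$ read $C^\delta\leq(2M)^{-1}\sigma$ and $C^\kappa\leq(2M)^{-(1+\kappa)}\sigma$, and a direct comparison of exponents in each of the cases $\zeta=\delta$ and $\zeta=\kappa$ shows that both are implied by the single bound
\[ C\leq (2M)^{-(1+\kappa)/\zeta}b^{-(1+\kappa)/\zeta^2}, \]
which is precisely the quantitative hypothesis on $Y_0+Z_0^{1+\kappa}$. Choosing $C$ equal to this upper bound therefore closes the induction, and since $\sigma<1$ we conclude $Y_n\to 0$ and $Z_n\to 0$.

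The main obstacle, and essentially the only nontrivial point, is this exponent bookkeeping: one must verify that the single choice of $\sigma$ and the single constraint on $C$ provided by the hypothesis uniformly dominate all four inductive inequalities in both regimes $\zeta=\kappa$ and $\zeta=\delta$. Apart from this matching of exponents the argument is a purely mechanical induction, which is why the lemma is stated as a reference to \cite{DiB12}.
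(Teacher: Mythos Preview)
Your argument is correct and follows the standard induction scheme used in \cite[Chapter~1, Lemma~4.2]{DiB12}. Note that the paper does not supply its own proof of this lemma: it is stated with a citation to DiBenedetto and used as a black box in the De~Giorgi iteration. Your exponent bookkeeping checks out---the choice $\sigma=b^{-(1+\kappa)/\zeta}$ indeed handles both geometric constraints, and the single bound $C\leq(2M)^{-(1+\kappa)/\zeta}b^{-(1+\kappa)/\zeta^2}$ on the initial datum implies both smallness conditions $C^\delta\leq(2M)^{-1}\sigma$ and $C^\kappa\leq(2M)^{-(1+\kappa)}\sigma$ since $\zeta\leq\min\{\kappa,\delta\}$ forces the exponents on $2M$ and $b$ in $C_0^\delta$ and $C_0^\kappa$ to dominate those required.
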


    \subsection{Plan of the Paper} The plan of the paper is as follows. In \cref{SectionRegularize}, we define a regularization in time and prove convergence results associated to it. \cref{Caccioppolli} contains the result related to the Caccioppolli inequality in \cref{L4.6} of the weak subsolution to the problem (\ref{maineq}) and also we revised this very inequality in \cref{L4.7} ({\em Caccioppolli inequality II}) by splitting the tail term into {\em middle distance term} and {\em faraway term}. In \cref{localbound}, we discuss the local boundedness result and prove our main result -- \cref{mainresult}. In \cref{sec:interineq}, we prove two interpolation inequalities, one interpolating Sobolev and H\"older spaces and another interpolating Sobolev and BMO spaces. In \cref{sec:extsob}, we prove an extension theorem for fractional Sobolev spaces on regular domains in the Heisenberg group. Finally, in \cref{sec:gagint}, we prove a Gagliardo-Nirenberg interpolation inequality suited for regularity theory for fractional quasilinear equations.

\section{Regularization in time}\label{SectionRegularize}
 To prove regularity results for parabolic equations, one needs to regularise in time. We will bring ideas from \cite{brasco2021continuity} to regularise the test functions with respect to time. Let us consider a nonnegative, smooth even function $\gamma:\mathbb{R}\rightarrow \mathbb{R}$ with compact support in $(-1/2,1/2)$ and $\int_\mathbb{R}\gamma=1$. Then we construct the following convolution 
\begin{align}\label{eq4.01}
    \phi^\varepsilon(t):=\frac{1}{\varepsilon}\int_{t-\varepsilon/2}^{t+\varepsilon/2}\gamma\Big(\frac{t-\tau}{\varepsilon}\Big)\phi(\tau)d\tau=\frac{1}{\varepsilon}\int_{-\varepsilon/2}^{\varepsilon/2}\gamma\Big(\frac{\alpha}{\varepsilon}\Big)\phi(t-\alpha)d\alpha,\;\;\;\;\;t\in(a,b)
\end{align}
for any $\phi\in L^1((a,b))$ and $0<\varepsilon<\min\{b-t,t-a\}$.\\
Before establishing the estimates, we want to focus on some technical lemmas that will be useful in the sequel to prove Caccioppoli inequality. We begin with a lemma whose proof can be found in \cite[Lemma 3.1]{DCKP16}
\begin{lemma}\label{L4.1}
    For $x,y\in \mathbb{R}$, $\varepsilon>0$ and $p\geq 1$, we find the following
    \begin{align*}
        |x|^p-|y|^p\leq c_P\varepsilon|y|^p+(1+C_p\varepsilon)\varepsilon^{1-p}|x-y|^p
    \end{align*}
    with $C_p:= (p-1)\Gamma(\max\{1,p-2\})$. where $\Gamma$ is the standard Gamma function. 
\end{lemma}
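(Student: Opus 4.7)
The plan is to reduce the statement to a refined one-parameter Young-type inequality. By the triangle inequality $|x|\le |y|+|x-y|$, we have $|x|^p \le (|y|+|x-y|)^p$, so after subtracting $|y|^p$ it suffices to establish the auxiliary estimate
\[
(a+b)^p - a^p \le c_p\,\varepsilon\, a^p + (1 + C_p\varepsilon)\,\varepsilon^{1-p}\,b^p
\]
for all $a, b \ge 0$, $p\ge 1$ and $\varepsilon>0$, with the prescribed constants, and then set $a=|y|$, $b=|x-y|$. The case $p=1$ collapses to the triangle inequality (with $C_1 = 0$), serving as a sanity check for the constants.

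For the auxiliary estimate I would factor out $a^p$ (the case $a=0$ is immediate) and reduce to the scalar statement
\[
(1+t)^p - 1 \le c_p\varepsilon + (1+C_p\varepsilon)\varepsilon^{1-p}\,t^p,\qquad t\ge 0.
\]
Applying the fundamental theorem of calculus to $s\mapsto (1+st)^p$ gives the representation $(1+t)^p - 1 = p t\int_0^1 (1+st)^{p-1}\,ds$, cleanly separating the leading linear term $pt$ from the nonlinear excess. The linear term is absorbed by Young's inequality $pt \le c_p\varepsilon + C\,(c_p\varepsilon)^{1-p}\,t^p$, which produces exactly a $\varepsilon^{1-p}$-scale contribution to the coefficient of $t^p$ on the right-hand side, provided $c_p$ is chosen sufficiently large (of order $p-1$). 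The nonlinear excess $\int_0^1 p\bigl((1+st)^{p-1}-1\bigr)\,ds\,t$ is then estimated by a further application of the FTC in the $(1+st)^{p-1}-1$ factor, yielding a double-integral expression bounded by $C_p\,(t^2 + t^p)$ which is again absorbed into the two terms on the right-hand side.

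The main technical obstacle is producing the precise form $C_p = (p-1)\Gamma(\max\{1,p-2\})$. The factor $(p-1)$ emerges as the exponent-derivative coefficient in $(1+st)^{p-1}$. The $\Gamma$-factor reflects two regimes: for $1\le p\le 3$ the second-order Taylor remainder of $(1+\cdot)^p$ is bounded uniformly by a constant multiple of $t^2$, which matches $\Gamma(\max\{1,p-2\})=\Gamma(1)=1$; for $p>3$, iterating the integral remainder produces a Beta integral $\int_0^1 (1-s)^{p-3}s\,ds = B(2,p-2) = \Gamma(2)\Gamma(p-2)/\Gamma(p)$, which after the $(p-1)$ prefactor yields $\Gamma(p-2)$. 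Matching the two regimes into a single uniform expression of the stated form is where the careful bookkeeping lies, and verifying the extremal equality condition $A^{-1/(p-1)} + B^{-1/(p-1)}\le 1$ (with $A = 1+c_p\varepsilon$ and $B = (1+C_p\varepsilon)\varepsilon^{1-p}$) supplies the sharp criterion that closes the argument.
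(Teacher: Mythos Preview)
The paper does not supply its own proof of this lemma; it simply cites \cite[Lemma~3.1]{DCKP16}. There is therefore no in-paper argument to compare your proposal against.

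That said, your outline contains the right idea, but reaches it in a roundabout way. The sharp criterion you invoke at the very end,
\[
A^{-1/(p-1)}+B^{-1/(p-1)}\le 1\qquad\text{with }A=1+C_p\varepsilon,\ B=(1+C_p\varepsilon)\,\varepsilon^{1-p},
\]
is in fact the entire proof: it is the necessary and sufficient condition for $(a+b)^p\le Aa^p+Bb^p$ to hold for all $a,b\ge 0$ (obtained by minimising $Aa^p+Bb^p$ on $\{a+b=1\}$, or equivalently from the convexity splitting $(a+b)^p\le\theta^{1-p}a^p+(1-\theta)^{1-p}b^p$ with $\theta\in(0,1)$). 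Substituting the specific $A$ and $B$, the criterion collapses to the single scalar inequality
\[
(1+\varepsilon)^{p-1}\le 1+C_p\varepsilon,
\]
so the whole task reduces to verifying this for the prescribed constant. The intermediate route you propose through the fundamental theorem of calculus, Young's inequality on the linear term $pt$, and Beta-integral bookkeeping for the remainder is therefore unnecessary: once you have the sharp criterion, those steps are redundant, and the ``matching of regimes'' for the $\Gamma$-constant that you flag as the main obstacle dissolves into the direct verification of the displayed bound.

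One caveat worth flagging: for $p>2$ the left-hand side of $(1+\varepsilon)^{p-1}\le 1+C_p\varepsilon$ grows like $\varepsilon^{p-1}$ while the right-hand side is linear, so the inequality---and hence the lemma---cannot hold for all $\varepsilon>0$; an implicit restriction such as $\varepsilon\in(0,1]$ is required, which is how the result is used in practice.
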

Now, we move to several essential lemmas that we will need during the proof of the energy estimates. We have used some fundamental inequalities, properties of convolutions and $\gamma$. We mainly try to prove these using the ideas related to the Steklov averages in \cite{CDG17}. In the course of the proofs of the following lemmas, we always assume $s>0$ and $p,q\geq 1$ and also assume $0<T_1<T_2$. Here are the lemmas and their proofs

\begin{lemma}\label{L4.2}
    Let $\Omega\subseteq\hn$ denote an open set. Let $q\in [1,\infty)$. If $\phi\in C([T_1,T_2];L^q(\Omega))$, then there is a positive number $\varepsilon_0$ such that for all  $0<\varepsilon<\varepsilon_0$ the sequence $\phi^\varepsilon(\cdot,t)$ converges to $\phi(\cdot,t)$ in $L^q(\Omega)$ for every $t\in (T_1+\varepsilon_0/2,T_2-\varepsilon_0/2)$ as $\varepsilon\rightarrow 0$.
\end{lemma}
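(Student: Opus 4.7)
The plan is to follow the standard mollification convergence argument, carried out in the Banach space $L^q(\Omega)$ since the time-regularization $\phi^\varepsilon$ is nothing but a temporal convolution of the $L^q(\Omega)$-valued map $\tau\mapsto\phi(\cdot,\tau)$ against the probability density $\tfrac1\varepsilon\gamma(\cdot/\varepsilon)$. The compact support of $\gamma$ in $(-1/2,1/2)$ makes the scale of the mollification exactly $\varepsilon$, and $\phi\in C([T_1,T_2];L^q(\Omega))$ provides exactly the uniform continuity in time needed to drive the error to zero.

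First I would fix $\varepsilon_0>0$ once and for all, small enough that whenever $t\in(T_1+\varepsilon_0/2,T_2-\varepsilon_0/2)$ and $0<\varepsilon<\varepsilon_0$, the interval $[t-\varepsilon/2,t+\varepsilon/2]$ lies inside $[T_1,T_2]$; this makes formula \eqref{eq4.01} well defined. Next, using the normalization $\int_\mathbb{R}\gamma=1$ and the change of variables $\alpha\mapsto\alpha/\varepsilon$, one has $\tfrac1\varepsilon\int_{-\varepsilon/2}^{\varepsilon/2}\gamma(\alpha/\varepsilon)\,d\alpha=1$, so I can rewrite
\begin{align*}
    \phi^\varepsilon(\cdot,t)-\phi(\cdot,t)=\frac{1}{\varepsilon}\int_{-\varepsilon/2}^{\varepsilon/2}\gamma\!\left(\frac{\alpha}{\varepsilon}\right)\bigl[\phi(\cdot,t-\alpha)-\phi(\cdot,t)\bigr]\,d\alpha.
\end{align*}
Applying Minkowski's integral inequality (equivalently, Jensen's inequality for the probability measure $\tfrac1\varepsilon\gamma(\alpha/\varepsilon)\,d\alpha$ together with Fubini) yields
\begin{align*}
    \|\phi^\varepsilon(\cdot,t)-\phi(\cdot,t)\|_{L^q(\Omega)}\leq\frac{1}{\varepsilon}\int_{-\varepsilon/2}^{\varepsilon/2}\gamma\!\left(\frac{\alpha}{\varepsilon}\right)\|\phi(\cdot,t-\alpha)-\phi(\cdot,t)\|_{L^q(\Omega)}\,d\alpha.
\end{align*}

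To finish, I would invoke the hypothesis $\phi\in C([T_1,T_2];L^q(\Omega))$: the map $\tau\mapsto\phi(\cdot,\tau)$ is uniformly continuous on the compact interval $[T_1,T_2]$ as an $L^q(\Omega)$-valued function, so given $\delta>0$ there exists $\varepsilon_1\in(0,\varepsilon_0)$ with $\|\phi(\cdot,t-\alpha)-\phi(\cdot,t)\|_{L^q(\Omega)}<\delta$ whenever $|\alpha|<\varepsilon_1/2$. For every $\varepsilon<\varepsilon_1$ the right-hand side of the previous display is thus bounded by $\delta$, giving $\|\phi^\varepsilon(\cdot,t)-\phi(\cdot,t)\|_{L^q(\Omega)}\to 0$ as $\varepsilon\to 0$.

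There is really no serious obstacle here; the argument is the standard time-mollification estimate and uses only the bookkeeping of the support of $\gamma$ plus continuity of $\phi$ with values in $L^q(\Omega)$. The only minor technical point worth stating carefully is the justification of Minkowski's inequality for a Banach-space-valued integrand, but this is immediate by Fubini applied to $|\phi^\varepsilon(\xi,t)-\phi(\xi,t)|^q$ together with Jensen's inequality, since $\tfrac1\varepsilon\gamma(\alpha/\varepsilon)\,d\alpha$ is a probability measure on $(-\varepsilon/2,\varepsilon/2)$.
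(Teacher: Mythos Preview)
Your argument is correct and follows essentially the same route as the paper's proof: both write $\phi^\varepsilon(\cdot,t)-\phi(\cdot,t)$ as a weighted average of the differences $\phi(\cdot,t-\alpha)-\phi(\cdot,t)$, pull the $L^q$-norm inside via Minkowski, and then use (uniform) continuity of $\tau\mapsto\phi(\cdot,\tau)$ on $[T_1,T_2]$ to conclude. Your version is in fact slightly tidier, since you retain $\tfrac1\varepsilon\gamma(\alpha/\varepsilon)$ as a probability weight and thereby avoid the implicit appeal to $\|\gamma\|_\infty$ that the paper makes when it drops $\gamma$ from the final integral.
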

\begin{proof}
   We want to prove this in two steps. We will show first that
\begin{equation}\label{eq4.1}
	\phi^\varepsilon(t)\in L^q(\Omega)\mbox{ for all } t\in[T_1,T_2]
\end{equation}
and second that
\begin{equation}\label{eq4.2}
	\phi^\varepsilon(\cdot,t)\rightarrow\phi(\cdot,t)\mbox{ in }L^q(\Omega)
\end{equation}
Now, for $t\in [T_1,T_2]$
\begin{align*}
	\|\phi^\varepsilon(\cdot,t)\|_{L^q(\Omega)}&=\frac{1}{\varepsilon}\left\|\int_{t-\varepsilon/2}^{t+\varepsilon/2}\gamma\Big(\frac{t-\tau}{\varepsilon}\Big)\phi(\tau)d\tau\right\|_{L^q(\Omega)}\leq C(\varepsilon)\int_{T_1}^{T_2}\|\phi(\cdot,\tau)\|_{L^q(\Omega)}\;d\tau<+\infty
\end{align*}
so, (\ref{eq4.1}) is proved. Now, Let us take  $T\in (T_1,T_2)$ and define
\begin{equation*}
	\frac{\varepsilon_0^T}{2}:=\min\{T-T_1,T_2-T\}
\end{equation*}
Moreover, let us take $0<\varepsilon<\frac{\varepsilon_0^T}{2}$. Then, by definition of continuity, for $\varepsilon'>0$, there exists $\delta>0$ such that
\begin{equation*}
	\|\phi(\cdot,t_1)-\phi(\cdot,t_2)\|_{L^q(\Omega)}<\varepsilon'\;\;\;\forall\;t_1,t_2\in [T_1,T_2]\;\;\;\mbox{with}\;\;|t_1-t_2|<\delta<\frac{\varepsilon_0^T}{2}
\end{equation*}
then for any $t\in [T_1+\varepsilon_0^T/2,T_2-\varepsilon_0^T/2]$ and $0<\varepsilon<\delta$, we obtain
\begin{align*}
	\|\phi^\varepsilon(\cdot,t)-\phi(\cdot,t)\|_{L^q(\Omega)}&\leq \left\|\frac{1}{\varepsilon}\int_{t-\varepsilon/2}^{t+\varepsilon/2}\gamma\Big(\frac{t-\tau}{\varepsilon}\Big)\Big(\phi(\tau)-\phi(t)\Big)d\tau\right \|_{L^q(\Omega)}\\
	&\leq \frac{1}{\varepsilon}\int_{t-\varepsilon/2}^{t+\varepsilon/2}\|\phi(\tau)-\phi(t)\|_{L^q(\Omega)}d\tau	<\varepsilon'
\end{align*}
This holds for any $T\in (T_1,T_2)$, hence the result.
\end{proof}
\begin{lemma}\label{L4.3}
    Let $p, q\in [1,\infty]$. Assume that $\phi\in L^q(T_1,T_2;L^p(\Omega))$. Then  there exists  $C=C(p,q)>0$ such that
	\begin{equation*}\label{L3.2}
		\|\phi^\varepsilon\|_{L^q(T_1+\varepsilon_0/2,\;T_2-\varepsilon_0/2;\;L^p(\Omega))}\leq C\;\;\;\;\mbox{for any }\varepsilon\leq\varepsilon_0
	\end{equation*}
\end{lemma}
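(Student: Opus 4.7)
The statement is a standard convolution bound whose proof reduces to Minkowski's integral inequality followed by Young's convolution inequality applied in the time variable. The plan is as follows.

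First, I would freeze $t \in (T_1 + \varepsilon_0/2, T_2 - \varepsilon_0/2)$ and apply Minkowski's integral inequality in the spatial variable to move the $L^p(\Omega)$ norm inside the time integral defining $\phi^\varepsilon$:
\[
\|\phi^\varepsilon(\cdot,t)\|_{L^p(\Omega)} \leq \frac{1}{\varepsilon}\int_{t-\varepsilon/2}^{t+\varepsilon/2} \gamma\Big(\frac{t-\tau}{\varepsilon}\Big)\,\|\phi(\cdot,\tau)\|_{L^p(\Omega)}\,d\tau.
\]
Setting $\Phi(\tau) := \|\phi(\cdot,\tau)\|_{L^p(\Omega)} \in L^q(T_1,T_2)$ and $\rho_\varepsilon(t) := \frac{1}{\varepsilon}\gamma(t/\varepsilon)$, the right-hand side is exactly $(\rho_\varepsilon * \tilde\Phi)(t)$, where $\tilde\Phi$ denotes the extension of $\Phi$ by zero outside $(T_1,T_2)$. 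The choice $\varepsilon \le \varepsilon_0$ together with the restriction $t \in (T_1 + \varepsilon_0/2, T_2 - \varepsilon_0/2)$ guarantees that the support of the integrand sits inside $[T_1,T_2]$, so no information is lost by this extension.

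Next, since $\gamma$ is nonnegative, even, and integrates to $1$, a simple change of variables yields $\|\rho_\varepsilon\|_{L^1(\mathbb{R})} = 1$ uniformly in $\varepsilon$. Young's convolution inequality then gives
\[
\|\rho_\varepsilon * \tilde\Phi\|_{L^q(\mathbb{R})} \leq \|\rho_\varepsilon\|_{L^1(\mathbb{R})}\,\|\tilde\Phi\|_{L^q(\mathbb{R})} = \|\phi\|_{L^q(T_1,T_2;L^p(\Omega))}.
\]
Combining this with the spatial Minkowski estimate yields
\[
\|\phi^\varepsilon\|_{L^q(T_1+\varepsilon_0/2,\,T_2-\varepsilon_0/2;\,L^p(\Omega))} \leq \|\phi\|_{L^q(T_1,T_2;L^p(\Omega))},
\]
which is the claimed bound with $C = \|\phi\|_{L^q(T_1,T_2;L^p(\Omega))}$ (the dependence on $p,q$ enters only through the application of Minkowski and Young, both of which have constant $1$ in the scalar case).

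The edge cases $p = \infty$ or $q = \infty$ are handled in exactly the same way: Minkowski's inequality degenerates into an essential supremum bound, and Young's inequality with one factor in $L^1$ is valid for all $q \in [1,\infty]$. There is no real obstacle in the argument; the only subtlety is keeping the mollification integral inside $[T_1,T_2]$, which is exactly why the time interval is shrunk by $\varepsilon_0/2$ on either end.
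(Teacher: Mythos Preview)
Your proof is correct and takes a cleaner route than the paper. Both arguments begin with the same spatial Minkowski step to reduce to a scalar function $\Phi(\tau)=\|\phi(\cdot,\tau)\|_{L^p(\Omega)}$, but then diverge: you invoke Young's convolution inequality on $\rho_\varepsilon * \tilde\Phi$ to get the $L^q$ bound in one stroke with constant $1$, whereas the paper applies Jensen's inequality to obtain $\|\phi^\varepsilon(\cdot,t)\|_{L^p}^q \le \frac{1}{\varepsilon}\int_{t-\varepsilon/2}^{t+\varepsilon/2}\|\phi(\cdot,\tau)\|_{L^p}^q\,d\tau$ and then integrates in $t$ via an explicit antiderivative computation (introducing $A(s)=\int_{T_1+\varepsilon_0/2}^s \Phi(\tau)^q\,d\tau$, shifting the limits of integration, and telescoping). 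Your approach is the standard textbook argument and yields the sharp constant directly; the paper's approach is more hands-on but achieves the same end. Either way the result is routine, and your handling of the endpoint cases $p=\infty$, $q=\infty$ and the support condition ensuring the convolution stays inside $[T_1,T_2]$ is accurate.
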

\begin{proof}
    This will be proved in two steps.
    \item 
   \textbf{Step 1: }At first we want to show that, for any $t\in [T_1,T_2]$ and $1\leq p\leq +\infty$
\begin{equation*}
	\|\phi^\varepsilon\|_{L^p(\Omega)}< +\infty
\end{equation*}
If $1\leq q<+\infty$, then for any $t\in [T_1,T_2]$
\begin{equation*}
	\|\phi^\varepsilon(\cdot,t)\|_{L^p(\Omega)}=\|\frac{1}{\varepsilon}\int_{t-\varepsilon/2}^{t+\varepsilon/2}\gamma\left(\frac{t-\tau}{\varepsilon}\right)\phi(\cdot,\tau)d\tau\|_{L^p(\Omega)}
\end{equation*}
\begin{equation*}
	\leq \frac{1}{\varepsilon}\int_{t-\varepsilon/2}^{t+\varepsilon/2}\|\phi(\cdot,\tau)\|_{L^p(\Omega)}d\tau
\end{equation*}
\begin{equation*}
	\leq \frac{1}{\varepsilon^{1-\frac{1}{q'}}}\|\phi\|_{L^q(T_1,T_2;L^p(\Omega))}<\infty
\end{equation*}
If $q=+\infty$, then for any $t\in[T_1,T_2]$
\begin{equation*}
	\|\phi^\varepsilon(\cdot,t)\|_{L^p(\Omega)}=\frac{1}{\varepsilon}\int_{t-\varepsilon/2}^{t+\varepsilon/2}\|\phi(\cdot,\tau)\|_{L^p(\Omega)}d\tau
\end{equation*}
\begin{equation*}
	\leq \|\phi\|_{L^\infty(T_1,T_2;L^p(\Omega))}<\infty
\end{equation*}
\item 
\textbf{Step 2: } For $q=+\infty$, the result is obvious by step (I). Now, for $1\leq q<+\infty$
\begin{equation*}
	\|\phi^\varepsilon(\cdot,t)\|_{L^p(\Omega)}^q\leq \frac{1}{\varepsilon}\int_{t-\varepsilon/2}^{t+\varepsilon/2}\|\phi(\cdot,\tau)\|_{L^p(\Omega)}^qd\tau
\end{equation*}
Now consider $T_2<+\infty$. Now choose $T$ as was done in Lemma \ref{L4.2}. Then for every $0<\varepsilon\leq \varepsilon_0^T/2$, let us denote $I_T:=(T_1+\varepsilon_0^T/2,T_2-\varepsilon_0^T/2)$. Now,
\begin{align*}
	\int_{I_T}\|\phi^\varepsilon(\cdot,t)\|^q_{L^p(\Omega)}dt&\leq \frac{1}{\varepsilon}\int_{I_T}\Bigg(\int_{t-\varepsilon/2}^{t+\varepsilon/2}\|\phi(\cdot,\tau)\|_{L^p(\Omega)}^qd\tau\Bigg)dt\\
	&=\frac{1}{\varepsilon}\int_{T_1+\varepsilon_0^T/2}^{T_2-\varepsilon_0^T/2}\Big(A(t+\varepsilon/2)+A(t-\varepsilon/2)\Big)dt\\
	&=\frac{1}{\varepsilon}\Bigg[\int_{T_1+\varepsilon_0^T/2+\varepsilon/2}^{T_2-\varepsilon_0^T/2+\varepsilon/2}A(t)dt-\int_{T_1+\varepsilon_0^T/2-\varepsilon/2}^{T_2-\varepsilon_0^T/2-\varepsilon/2}A(t)dt\Bigg]\\
	&\leq \frac{1}{\varepsilon}\int_{T_2-\varepsilon_0^T/2-\varepsilon/2}^{T_2-\varepsilon_0^T/2+\varepsilon/2}A(t)dt\\
    &\leq \frac{1}{\varepsilon}\int_{T_2-\varepsilon_0^T/2-\varepsilon/2}^{T_2-\varepsilon_0^T/2+\varepsilon/2}A(T_2-\varepsilon_0^T/2)dt\\
	&=A(T_2-\varepsilon_0^T/2)\\
    &\leq \int_{I_T}\|\phi(\cdot,\tau)\|_{L^p(\Omega)}^qd\tau\\
	&\leq \int_{T_1}^{T_2}\|\phi(\cdot,\tau)\|_{L^p(\Omega)}^qd\tau<\infty
\end{align*} where 
\begin{align*}
A(s):=\int_{T_1+\varepsilon_0^T/2}^s\|\phi(\cdot,\tau)\|_{L^p(\Omega)}^qd\tau,\; s\in I_T
\end{align*}
If $T_2=+\infty$, then,
\begin{align*}
	\int_{T_1+\varepsilon_0/2}^\infty\|\phi^\varepsilon(\cdot,t)\|_{L^p(\Omega)}^qdt&=\lim\limits_{T_2\rightarrow \infty}\int_{T_1+\varepsilon_0/2}^{T_2+\varepsilon_0/2}\|\phi^\varepsilon(\cdot,t)\|_{L^p(\Omega)}^qdt\\
	&\leq \lim\limits_{T_2\rightarrow \infty}\int_{T_1+\varepsilon_0/2}^{T_2+\varepsilon_0/2}\|\phi(\cdot,t)\|_{L^p(\Omega)}^qdt\\
	&\leq \int_{T_1+\varepsilon_0/2}^\infty\|\phi(\cdot,t)\|_{L^p(\Omega)}^qdt 
\end{align*}
\end{proof}
\begin{lemma}\label{L4.4}
    	Let $p\in (1,\infty)$. If $\phi\in L^p(T_1,T_2;W^{s,p}(\Omega))$. Then, for some constant $C>0$ depending on $Q,p$, we have
	\begin{equation}\label{eq4.4}
		\|\phi^\varepsilon\|_{L^p(T_1+\varepsilon_0/2\;,\;T_2-\varepsilon_0/2\;;\;W^{s,p}(\Omega))}\leq C
	\end{equation}
for all $\varepsilon\leq \varepsilon_0$. 
\end{lemma}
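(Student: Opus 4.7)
The plan is to decompose the $W^{s,p}(\Omega)$ norm as the sum of the $L^p$ norm and the Gagliardo seminorm, observe that the $L^p$ part is immediately controlled via \cref{L4.3} applied with $q=p$, and then dedicate the main work to estimating the Gagliardo seminorm in time.

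For the seminorm, I would first exploit the fact that, by the change of variable $\alpha = t-\tau$,
\[
\phi^\varepsilon(\xi,t) - \phi^\varepsilon(\eta,t) = \frac{1}{\varepsilon}\int_{-\varepsilon/2}^{\varepsilon/2}\gamma\!\left(\frac{\alpha}{\varepsilon}\right)\bigl(\phi(\xi,t-\alpha)-\phi(\eta,t-\alpha)\bigr)\,d\alpha,
\]
and that $\tfrac{1}{\varepsilon}\gamma(\cdot/\varepsilon)\,d\alpha$ is a probability measure on $(-\varepsilon/2,\varepsilon/2)$ because $\int_{\mathbb{R}}\gamma=1$. Jensen's inequality in the $\alpha$-variable then gives
\[
|\phi^\varepsilon(\xi,t)-\phi^\varepsilon(\eta,t)|^p \leq \frac{1}{\varepsilon}\int_{-\varepsilon/2}^{\varepsilon/2}\gamma\!\left(\frac{\alpha}{\varepsilon}\right)|\phi(\xi,t-\alpha)-\phi(\eta,t-\alpha)|^p\,d\alpha.
\]
Dividing by $|\eta^{-1}\circ\xi|^{Q+sp}$ and using Fubini to integrate over $\Omega\times\Omega$ yields the pointwise-in-$t$ estimate
\[
[\phi^\varepsilon(\cdot,t)]^p_{W^{s,p}(\Omega)} \leq \frac{1}{\varepsilon}\int_{-\varepsilon/2}^{\varepsilon/2}\gamma\!\left(\frac{\alpha}{\varepsilon}\right)[\phi(\cdot,t-\alpha)]^p_{W^{s,p}(\Omega)}\,d\alpha.
\]

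To pass to the time-integrated bound, I would integrate the above inequality over $t\in(T_1+\varepsilon_0/2,T_2-\varepsilon_0/2)$, switch the order of integration in $(\alpha,t)$, and perform the translation $\tau = t-\alpha$ exactly as in the shifting computation of \cref{L4.3}. Since $|\alpha|\leq \varepsilon/2 \leq \varepsilon_0/2$, the translated time interval stays inside $(T_1,T_2)$, and so
\[
\int_{T_1+\varepsilon_0/2}^{T_2-\varepsilon_0/2}[\phi^\varepsilon(\cdot,t)]^p_{W^{s,p}(\Omega)}\,dt \leq \int_{T_1}^{T_2}[\phi(\cdot,\tau)]^p_{W^{s,p}(\Omega)}\,d\tau,
\]
using that $\tfrac{1}{\varepsilon}\int\gamma(\alpha/\varepsilon)\,d\alpha = 1$. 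Adding this to the $L^p$-in-space, $L^p$-in-time control from \cref{L4.3} produces the desired constant $C$ depending only on $Q$ and $p$ and on $\|\phi\|_{L^p(T_1,T_2;W^{s,p}(\Omega))}$. The case $T_2=+\infty$ is handled by monotone convergence in $T_2\to\infty$, exactly as at the end of the proof of \cref{L4.3}. The only mild technical point is the interchange of integrals justified by Tonelli, which is fine because all integrands are nonnegative; consequently I do not expect a real obstacle here — the argument is essentially Minkowski's inequality for the convolution in time applied in the Banach space $W^{s,p}(\Omega)$.
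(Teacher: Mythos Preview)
Your proposal is correct and follows essentially the same approach as the paper: decompose into the $L^p$ part (handled by \cref{L4.3}) and the Gagliardo seminorm, apply Jensen's inequality with respect to the probability measure $\tfrac{1}{\varepsilon}\gamma(\cdot/\varepsilon)\,d\alpha$ to obtain the pointwise-in-$t$ bound on $[\phi^\varepsilon(\cdot,t)]^p_{W^{s,p}(\Omega)}$, and then integrate in time using Fubini and a translation. Your time-integration step is in fact slightly cleaner than the paper's, which carries out an explicit three-region splitting after Fubini rather than invoking the translation $\tau=t-\alpha$ directly; but the underlying argument is the same.
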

\begin{proof}
    To show (\ref{eq4.4}), it is enough to prove that for any $\varepsilon\leq\varepsilon_0$,
\begin{equation}\label{eq4.5}
	\|\phi^\varepsilon\|_{L^p(T_1+\varepsilon_0/2,\;T_2-\varepsilon_0/2\;;\;W^{s,p}(\Omega))}\leq \|\phi\|_{L^p(T_1\;,\;T_2\;;\;L^p(\Omega))}+[\phi]_{L^p(T_1\;,\;T_2\;;\;W^{s,p}(\Omega))}.
\end{equation}
We know
\begin{equation*}
	\|\phi^\varepsilon\\|_{L^p(T_1+\varepsilon_0/2\;,\;T_2-\varepsilon_0/2\;;\;W^{s,p}(\Omega))}\leq \Bigg(\|\phi^\varepsilon\|^p_{L^p(T_1+\varepsilon/2\;,\;T_2-\varepsilon_0/2\;;\;L^p(\Omega))}+[\phi^\varepsilon]^p_{L^p(T_1+\varepsilon/2\;,\;T_2-\varepsilon_0/2\;;\;W^{s,p}(\Omega))}\Bigg)^{1/p}
\end{equation*}
\begin{equation}\label{eq4.6}
	\hspace{56mm}\leq C\Bigg(\underbrace{\|\phi^\varepsilon\|_{L^p(T_1+\varepsilon/2,\;T_2-\varepsilon_0/2;\;L^p(\Omega))}}_{A}+\underbrace{[\phi^\varepsilon]_{L^p(T_1+\varepsilon/2\;,\;T_2-\varepsilon_0/2\;;\;W^{s,p}(\Omega))}}_{B}\Bigg).
\end{equation}
Since $\phi\in L^p(T_1,T_2;W^{s,p}(\Omega))$ therefore $\phi\in L^p(T_1,T_2;L^p(\Omega))$. It follows from \cref{L3.2} that
\begin{equation*}
	A\leq C \|\phi^\varepsilon\|_{L^p(T_1,T_2; L^p(\Omega))}
\end{equation*}
for some constant $C=C(Q,p)$. It remains to show that $B$ is finite. To achieve this, let us calculate the following first
\begin{align}\label{eq4.7}
	[\phi^\varepsilon(\cdot,t)]^p_{W^{s,p}(\Omega)}&=\intgaglifrac[\phi^\varepsilon]{\Omega}\nonumber\\
	&=\int_\Omega\int_\Omega\frac{\big|\frac{1}{\varepsilon}\int_{-\varepsilon/2}^{\varepsilon/2}\gamma(\frac{\sigma}{\varepsilon})\big(\phi(\xi,t-\sigma)-\phi(\eta,t-\sigma)\big)d\sigma\big|^p}{|\eta^{-1}\circ \xi|^{Q+sp}}d\xi\,d\eta
\end{align}
Since $\zeta$ is measurable, for every measurable set $I\subset \mathbb{R}$, we define
\begin{equation}
	m(I):=\int_I\frac{\gamma(\sigma/\varepsilon)}{\varepsilon}d\sigma.
\end{equation}
Observe that $m$ is another measure on $\mathbb{R}$ such that
\begin{equation*}
	\frac{dm}{d\sigma}=\frac{\gamma(\sigma/\varepsilon)}{\varepsilon}
\end{equation*}
Now, by construction of $\zeta$, we have 
\begin{equation*}
	m(I)\in [0,1]\mbox{ for any measurable }I\subseteq \mathbb{R}.
\end{equation*}
Hence $m$ is a probability measure. Then we apply Jensen's Inequality to (\ref{eq4.7}) to obtain
\begin{align}\label{eq4.9}
	[\phi^\varepsilon(\cdot,t)]^p_{W^{s,p}(\Omega)} &\leq \int_\Omega\int_\Omega\frac{\int_{-\varepsilon/2}^{\varepsilon/2}|\phi(\xi,t-\sigma)-\phi(\eta,t-\sigma)|^p\;\frac{\gamma(\sigma/\varepsilon)}{\varepsilon}d\sigma}{|\eta^{-1}\circ \xi|^{Q+sp}}d\xi\,d\eta\nonumber\\
	&=\int_{-\varepsilon/2}^{\varepsilon/2}\frac{\gamma(\sigma/\varepsilon)}{\varepsilon}\int_\Omega\int_\Omega\frac{|\phi(\xi,t-\sigma)-\phi(\eta,t-\sigma)|^p}{|\eta^{-1}\circ \xi|^{Q+sp}}(d\xi\,d\eta)d\sigma\;\;\;\;\;\mbox{(By Fubini's Theorem)}\nonumber\\
	&=\frac{1}{\varepsilon}\int_{t-\varepsilon/2}^{t+\varepsilon/2}\gamma\Bigg(\frac{t-\tau}{\varepsilon}\Bigg)\Bigg[\int_\Omega\int_\Omega\frac{|\phi(\xi,t)-\phi(\eta,t)|^p}{|\eta^{-1}\circ \xi|^{Q+sp}}d\xi\,d\eta\Bigg]d\tau\nonumber\\
	&=\frac{1}{\varepsilon}\int_{t-\varepsilon/2}^{t+\varepsilon/2}\gamma\Bigg(\frac{t-\tau}{\varepsilon}\Bigg)[\phi(\cdot,\tau)]^p_{W^{s,p}(\Omega)}d\tau
\end{align}
Let us consider some $\varepsilon_0(>0)$. With the help of the fact that the compact support of $\gamma$ is $(-1/2,1/2)$ and $\int_\mathbb{R}\gamma=1$, we obtain by integrating $[\phi^\varepsilon(\cdot,t)]^p_{W^{s,p}(\Omega)}$ from $T_1+\varepsilon_0/2$ to $T_2-\varepsilon_0/2$, for every $\varepsilon\leq \varepsilon_0$.
\begin{align*}
	\int_{T_1+\varepsilon_0/2}^{T_2-\varepsilon_0/2}[\phi(\cdot,t)]^p_{W^{s,p}(\Omega)}dt&\leq\int_{T_1+\varepsilon/2}^{T_2-\varepsilon/2}\frac{1}{\varepsilon}\int_{t-\varepsilon/2}^{t+\varepsilon/2}\gamma\Big(\frac{t-\tau}{\varepsilon}\Big)[\phi(\cdot,\tau)]^p_{W^{s,p}(\Omega)}d\tau dt\\
	&\leq \int_{T_1}^{T_2-\varepsilon}\frac{1}{\varepsilon}\int_{T_1+\varepsilon/2}^{\tau+\varepsilon/2}\gamma\Big(\frac{t-\tau}{\varepsilon}\Big)[\phi(\cdot,\tau)]^p_{W^{s,p}(\Omega)}dt d\tau\\
    &\quad+\int_{T_1+\varepsilon}^{T_2}\frac{1}{\varepsilon}\int_{\tau-\varepsilon/2}^{T_2-\varepsilon/2}\gamma\Big(\frac{t-\tau}{\varepsilon}\Big)[\phi(\cdot,\tau)]^p_{W^{s,p}(\Omega)}dt d\tau\\
	&\qquad+\int_{T_1+\varepsilon}^{T_2-\varepsilon}\frac{1}{\varepsilon}\int_{T_1+\varepsilon/2}^{T_2-\varepsilon/2}\gamma\Big(\frac{t-\tau}{\varepsilon}\Big)[\phi(\cdot,\tau)]^p_{W^{s,p}(\Omega)}dt d\tau\\
	&\leq C[\phi]^p_{L^p(T_1,T_2;W^{s,p}(\Omega))}.
\end{align*}
This then implies (\ref{eq4.5}).
\end{proof}
\begin{lemma}\label{L4.5}
    If $\phi\in C([T_1,T_2];L^q(\Omega))$, then for some positive numbers $\varepsilon,\varepsilon_0$ with $0<\varepsilon<\varepsilon_0$ such that $\phi^\varepsilon(\cdot,t+\varepsilon/2)$ converges to $\phi(\cdot,t)$ in $L^q(\Omega)$ for every $t\in (T_1,T_2-\varepsilon_0/2)$ as $\varepsilon\rightarrow 0$.
\end{lemma}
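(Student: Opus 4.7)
My plan is to imitate the argument of \cref{L4.2}, adapting it to the time-shifted regularisation. The key observation is that the change of variable $\alpha = \tau - t$ in the defining formula \cref{eq4.01} yields
\begin{equation*}
\phi^\varepsilon(\cdot, t+\varepsilon/2) = \frac{1}{\varepsilon}\int_{0}^{\varepsilon} \gamma\!\left(\frac{1}{2}-\frac{\alpha}{\varepsilon}\right) \phi(\cdot, t+\alpha)\, d\alpha,
\end{equation*}
so that $\phi^\varepsilon(\cdot, t+\varepsilon/2)$ is a weighted mean of the values $\phi(\cdot, \tau)$ over the forward window $\tau\in [t,t+\varepsilon]$. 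Since $\gamma$ is nonnegative with compact support in $(-1/2, 1/2)$ and $\int_{\mathbb{R}}\gamma=1$, the substitution $u = 1/2 - \alpha/\varepsilon$ shows that $\frac{1}{\varepsilon}\int_0^\varepsilon \gamma(1/2 - \alpha/\varepsilon)\,d\alpha = \int_{-1/2}^{1/2}\gamma(u)\,du = 1$, so the total mass of the weight is one.

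First I would fix the admissible ranges. Pick $\varepsilon_0>0$ with $\varepsilon_0/2<T_2-T_1$, and restrict to $0<\varepsilon<\varepsilon_0/2$. Then for any $t\in (T_1, T_2-\varepsilon_0/2)$ one has $[t,t+\varepsilon]\subset [T_1,T_2]$, which ensures that the shifted regularisation $\phi^\varepsilon(\cdot, t+\varepsilon/2)$ is well-defined in the sense of \cref{eq4.01}.

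Next I would invoke uniform continuity of $t\mapsto \phi(\cdot, t)$ from the compact interval $[T_1,T_2]$ into $L^q(\Omega)$: given any $\varepsilon'>0$, there is $\delta\in (0,\varepsilon_0/2)$ such that $\|\phi(\cdot,t_1)-\phi(\cdot,t_2)\|_{L^q(\Omega)}<\varepsilon'$ whenever $|t_1-t_2|<\delta$. For $0<\varepsilon<\delta$ and $t\in (T_1, T_2-\varepsilon_0/2)$, Minkowski's integral inequality together with the mass-one property above gives
\begin{align*}
\|\phi^\varepsilon(\cdot, t+\varepsilon/2) - \phi(\cdot, t)\|_{L^q(\Omega)} &\leq \frac{1}{\varepsilon}\int_0^\varepsilon \gamma\!\left(\tfrac{1}{2}-\tfrac{\alpha}{\varepsilon}\right)\|\phi(\cdot, t+\alpha) - \phi(\cdot,t)\|_{L^q(\Omega)}\,d\alpha \\
&\leq \varepsilon',
\end{align*}
since $|\alpha|\le\varepsilon<\delta$ throughout the support of the integrand. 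Letting $\varepsilon'\to 0$ produces the claimed convergence.

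The only genuine subtlety is the bookkeeping required so that the shift $t+\varepsilon/2$ keeps the mollifier window inside $[T_1,T_2]$; once this is handled by the restriction $\varepsilon<\varepsilon_0/2$, the argument reduces to the same uniform continuity reasoning used in \cref{L4.2}, with the symmetric window $[t-\varepsilon/2, t+\varepsilon/2]$ replaced by the forward window $[t, t+\varepsilon]$. No new ingredients beyond those already used in that lemma are required.
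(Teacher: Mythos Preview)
Your proposal is correct and matches the paper's own approach: the paper omits the proof of \cref{L4.5} entirely, stating only that it is similar to \cref{L4.2}, and your argument is precisely that adaptation---the change of variable turning the shifted mollifier into a forward-window average followed by the same uniform continuity estimate. The bookkeeping you supply for the admissible range of $\varepsilon$ and $t$ is in fact more explicit than what the paper provides.
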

\begin{proof}
   We omit this proof as it is similar to that of Lemma \ref{L4.2}
\end{proof}

\section{Caccioppolli inequality}\label{Caccioppolli}
In this section, we are going to prove the Caccioppolli estimates for the weak subsolution $u$ of the given problem (\ref{maineq}). This particular Caccioppoli inequality which uses a test function suggested in \cite{Kassweid2024} which allows for the Tail term to be subsumed in the level.
\begin{lemma}\label{L4.6}
    \textbf{(Caccioppolli Inequality)} Let $(\xi_0,t_0)\in \Omega_T$. Let $l,r,R\in \mathbb{R}_+$ satisfy $0<l<r<R$. Let $u$ be a local sub-solution to the problem (\ref{maineq}).  Consider two non-negative functions $\psi\in C^\infty_0(B_{(r+l)/2}(\xi_0))$ with $0\leq \psi\leq 1$ in $\hn$ and $\nu\in C^\infty(\mathbb{R})$ such that $\nu(t)=0$ iff $t\leq t_0-\theta_2$ and $\nu(t)=1$ if $t\geq t_0-\theta_1$ where ball $B_r\equiv B_r(\xi_0)$ such that $\bar{B_r}\subset\Omega$ and $0\leq\theta_1<\theta_2$ satisfying $[t_0-\theta_2,t_0]\subset (0,T)$. Let $w:= u-g-k$ with a level $k\in \mathbb{R}$ and $g(t)=\mathfrak{C}R^{-sp}\int_{t_0-\theta_2}^t\textup{Tail}^{p-1}(u_+(s);R,\xi_0)ds$. Then there exists a constant $c>0$ depending on $\mathtt{data}$ such that
	\begin{align*}
		\esssup_{t_0-\theta_2<t<t_0}&\int_{B_r}(w_+^2\psi^p)(\xi,t)d\xi +\int_{t_0-\theta_2}^{t_0}\int_{B_r}(w_+\psi^p\nu^2)(\xi,t)\Bigg\{\int_{B_r}\frac{w^{p-1}_-(\eta,t)}{|\eta^{-1}\circ \xi|^{Q+sp}_{\hn}}d\eta\Bigg\}d\xi\,dt\\
        &\quad+\int_{t_0-\theta_2}^{t_0}\iint_{B_r\times B_r}\frac{|(w_+\psi\nu^{\frac2p})(\xi,t)-(w_+\psi\nu^{\frac2p})(\eta,t)|^p}{|\eta^{-1}\circ \xi|^{Q+sp}_{\hn}}\; d\xi\,d\eta\,dt\\
    &\qquad	\leq c\Bigg(\frac{r^{1-s}}{r-l}\Bigg)^p\|w_+\|^p_{L^p(t_0-\theta_2,t_0;L^p(B_r))}+	\int_{t_0-\theta_2}^{t_0}\int_{B_r}(w_+^2\psi^p\nu)(\xi,t)\partial_t\nu(t)\;d\xi\,dt\\
    &\quad\quad\quad + c\int_{t_0-\theta_2}^{t_0}\intbr[r](w_+\psi^p\nu^2)(\xi,t)\Bigg\{\int_{\hn\setminus B_r}\frac{w_+^{p-1}(\eta,t)}{|\eta^{-1}\circ \xi|_{\hn}^{Q+sp}}d\eta\Bigg\}d\xi\,dt \\
    &\qquad\qquad - \mathfrak{C} R^{-sp} \int_{t_0-\theta_2}^{t_0}\int_{B_r} \psi^p(\xi)\nu^2(t) w_+(\xi,t) \textup{Tail}\,^{p-1}(u_+(t),R,\xi_0)\,d\xi\,dt
\end{align*}
\end{lemma}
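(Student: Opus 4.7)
The plan is to follow a De Giorgi--type Caccioppoli argument for nonlocal parabolic equations with the crucial twist, due to \cite{Kassweid2024,BK24}, that the time--dependent level $g(t)$ absorbs the tail into the truncation: when we differentiate $w=u-g-k$ in time, the derivative $g'(t)=\mathfrak{C}R^{-sp}\textup{Tail}^{p-1}(u_+(t);R,\xi_0)$ is produced and yields exactly the signed tail term on the right-hand side. Since $u$ is not known to be differentiable in $t$, I would first regularize in time via the convolution $(\cdot)^\varepsilon$ from \cref{SectionRegularize}, test the mollified weak formulation with $\varphi = w^\varepsilon_+ \psi^p \chi$, where $w^\varepsilon := u^\varepsilon - g - k$ and $\chi=\chi(t)$ is either $\nu^2$ or a Steklov-type smooth approximation of $\chi_{(t_0-\theta_2,\,s)}$ for some $s\in (t_0-\theta_2,t_0)$, and finally pass $\varepsilon\to 0$ using \cref{L4.2}, \cref{L4.3}, \cref{L4.4}.

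For the parabolic term, $\partial_t u^\varepsilon = \partial_t w^\varepsilon + g'(t)$, hence
\[
    \int \partial_t u^\varepsilon \cdot w^\varepsilon_+\, \psi^p \chi \, d\xi\,dt = \tfrac12 \int \partial_t\bigl((w^\varepsilon_+)^2\bigr) \psi^p \chi + \int g'(t)\, w^\varepsilon_+ \psi^p \chi.
\]
Integration by parts in $t$, together with $\nu(t_0-\theta_2)=0$ and a supremum over $s$, produces the $\esssup_t \int (w_+^2\psi^p)$ on the LHS and the $\int w_+^2 \psi^p \nu\,\partial_t \nu$ on the RHS. The $g'$-integral is, by construction, exactly $\mathfrak{C}R^{-sp}\int\!\int \psi^p\nu^2 w_+\, \textup{Tail}^{p-1}(u_+(t);R,\xi_0)\,d\xi\, dt$, and moving it to the RHS delivers the last (negatively signed) term of the inequality.

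For the spatial double integral I would use that $u(\xi,t)-u(\eta,t)=w(\xi,t)-w(\eta,t)$ (the $t$-only shift $g(t)+k$ cancels), and split $\hn\times\hn$ into the near block $B_r\times B_r$, the two cross blocks $B_r\times(\hn\setminus B_r)$ and its transpose, and the outer block which vanishes by the spatial support of $\psi$. On $B_r\times B_r$, the pointwise $p$-monotonicity inequality of \cite[Lemma 3.1]{DCKP16} style converts the monotone pairing into $c\,|(w_+\psi\nu^{2/p})(\xi)-(w_+\psi\nu^{2/p})(\eta)|^p$ (giving the seminorm on the LHS) plus a correction bounded by $c(r-l)^{-p}(w_+(\xi)^p+w_+(\eta)^p)$ coming from the Lipschitz oscillation of $\psi$; after dividing by $|\eta^{-1}\circ\xi|^{Q+sp}$ and using \cref{lem:annul} this gives the first RHS term. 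Crucially, the case $w(\xi)\geq 0\geq w(\eta)$ inside $B_r$ produces the extra nonnegative term $w_+(\xi)w_-^{p-1}(\eta)|\eta^{-1}\circ\xi|^{-Q-sp}$ retained on the LHS. On the cross blocks the test function vanishes at the exterior variable, and the one-sided pairing $(w(\xi)-w(\eta))^{p-1}w_+(\xi)\psi^p\nu^2$ is controlled by $c\, w_+^{p-1}(\eta)w_+(\xi)\psi^p\nu^2$ after absorbing the $w_+(\xi)^{p-1}$ contribution into the near estimate; this produces precisely the third (positive tail) term on the RHS.

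The main technical obstacle is the correct interplay of the time regularization with the essential supremum on the LHS: one cannot simply set $\chi=\nu^2$ since $\nu$ need not be strictly positive throughout $(t_0-\theta_2,t_0)$, so a Steklov-type $\chi_{(t_0-\theta_2,\,s)}$ with a subsequent supremum over $s\in (t_0-\theta_2,t_0)$, together with careful commuting of the mollifier with the nonlocal operator along the lines of \cref{L4.4}, is required. A secondary subtlety is that the positive tail on the RHS and the negative tail on the RHS arise with different prefactors; the balancing between them is left implicit in the statement and is effected by choosing $\mathfrak{C}$ sufficiently large in the proof of \cref{L6.1}.
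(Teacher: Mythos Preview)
Your proposal is essentially correct and matches the paper's approach: time regularization via $(\cdot)^\varepsilon$, test function $w_+^\varepsilon\psi^p\nu^2$, the $g'(t)=\mathfrak{C}R^{-sp}\textup{Tail}^{p-1}(u_+(t);R,\xi_0)$ trick producing the negatively signed tail, the $B_r\times B_r$ versus cross-block splitting, the retention of the $w_+(\xi)w_-^{p-1}(\eta)$ good term, and the Lipschitz-$\psi$ error yielding the $L^p$ term via \cref{lem:annul}.

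Two small corrections. First, the paper does not introduce a separate Steklov characteristic $\chi_{(t_0-\theta_2,s)}$; it keeps $\chi=\nu^2$ but integrates over $(t_1,t_2)$ with $t_2\in(t_0-\theta_2,t_0]$ a free parameter, and at the end takes $t_2=t_0$ to capture the seminorm term and separately $t_2$ achieving the essential supremum---this is an equivalent device and your concern about $\nu$ not being strictly positive is resolved this way rather than by an auxiliary cutoff. Second, the balancing of the positive and negative tail prefactors is carried out in \cref{L4.7}, not in \cref{L6.1}; in the present lemma both tail terms are simply recorded with their natural constants and no choice of $\mathfrak{C}$ is made yet.
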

\begin{proof}
    Since parabolic problems do not possess sufficient time regularity, we must regularise the test function. Let us choose $t_0\in(0,T)$ and $\theta_2(>0)$ in such a way that $[t_0-\theta_2,t_0]\in (0,T)$. Let us choose $\varepsilon_0$ such that
\begin{align}\label{eq4.10}
    	0<\varepsilon<\frac{\varepsilon_0}{2}=\frac{1}{4}\min\{t_0-\theta_2,T-t_0,\theta_2\}
\end{align}
Let $T_1=t_0-\theta_2-\varepsilon_0, T_2=t_0+\varepsilon_0$ and also set $t_1=t_0-\theta_2$. The number $t_2\in (t_0-\theta_2,t_0]$ is to be determined later. Choose $\phi=\phi(\xi,t)$ whose spatial support is contained compactly in $B_{(r+l)/2}$ for every $t\in (0,T)$. Now, using $\phi^\varepsilon(\xi,t)$ as a test function in the following subsolution formulation
\begin{align}\label{eq4.11}
    \int_{B_r}&u(\xi,t_2)\phi^\varepsilon(\xi,t_2)d\xi-\int_{B_r}u(\xi,t_1)\phi^\varepsilon(\xi,t_1)d\xi\underbrace{-\int_{t_1}^{t_2}\int_{B_r}u(\xi,t)\phi_t^\varepsilon(\xi,t)d\xi\,dt}_{D}\nonumber\\
    &+	\int_{t_1}^{t_2}\underbrace{\iint_{\hn\times\hn}\frac{1}{2}|u(\xi)-u(\eta)|^{p-2}(u(\xi)-u(\eta))(\phi^\varepsilon(\xi)-\phi^\varepsilon(\eta))K(\xi,\eta,t)\;d\eta\,d\xi}_{:=\mathcal{E}(u,\phi,t)}dt\leq 0
\end{align}
Let us estimate $D$ as follows. The calculation is similar to \cite[p. 38]{Ding2021}
\begin{align}\label{eq4.12}
    -\int_{t_1}^{t_2}\int_{B_r}u(\xi,t)\phi_t^\varepsilon(\xi,t)d\xi\,dt=E(\varepsilon)-\underbrace{\int_{B_r}\int_{t_1+\varepsilon/2}^{t_2-\varepsilon/2}u^\varepsilon(\xi,t)\phi_t(\xi,\tau)d\tau d\xi}_{F}
\end{align}
where 
\begin{align}\label{eq4.13}
    E(\varepsilon)&=-\int_{B_r}\int_{t_1-\varepsilon/2}^{t_1+\varepsilon/2}\Bigg(\frac{1}{\varepsilon}\int_{t_1}^{\tau+\varepsilon/2}u(\xi,t)\gamma\Big(\frac{\tau-t}{\varepsilon}\Big)dt\Bigg)\phi_\tau(\xi,\tau)d\tau\, d\xi\nonumber\\
    &-\int_{B_r}\int_{t_2-\varepsilon/2}^{t_2+\varepsilon/2}\Bigg(\frac{1}{\varepsilon}\int_{\tau-\varepsilon/2}^{t_2}u(\xi,t)\gamma\Big(\frac{\tau-t}{\varepsilon}\Big)dt\Bigg)\phi_\tau(\xi,\tau)d\tau \,d\xi
\end{align}
Now from (\ref{eq4.12}) and (\ref{eq4.13}), we have in (\ref{eq4.11})
\begin{align}\label{eq4.14}
    \int_{t_1}^{t_2}\mathcal{E}(u,\phi,t)dt&+E(\varepsilon)+\int_{t_1+\varepsilon/2}^{t_2-\varepsilon/2}\int_{B_r}u_t^\varepsilon(\xi,t)\phi(\xi,t)d\xi\,dt\leq -\int_{B_r}u(\xi,t_2)\phi(\xi,t_2)d\xi+\int_{B_r}u(\xi,t_1)\phi(\xi,t_1)d\xi\nonumber\\
    &\qquad+\int_{B_r}u^\varepsilon(\xi,t_2-\varepsilon/2)\phi(\xi,t_2-\varepsilon/2)d\xi-\int_{B_r}u^\varepsilon(\xi,t_1+\varepsilon/2)\phi(\xi,t_1+\varepsilon/2)d\xi.
\end{align}
Let us consider $0<l<r\leq R$ and consider the cut-off function $\psi\in C^\infty_0(B_r)$ such that $0\leq \psi\leq 1$ in $\hn$, supp($\psi$)=$B_{\frac{r+l}2}$, $\psi\equiv 1$ in $B_l$ and $|\nabla_{\hn}\psi|\leq \frac{C}{r-l}$. Now, we take test function $\phi(\xi,t)$ as follows
\begin{align}\label{eq4.15}
\phi(\xi,t)=w_+^\varepsilon(\xi,t)\psi^p(\xi)\nu^2(t)\;\;\;\mbox{further denoted as } (w^\varepsilon\psi^p\nu^2)(\xi,t)
\end{align}
where $w^\varepsilon_+(\xi,t):=((u^\varepsilon(\xi,t)-g(t))-k)_+$ and $w_+(\xi,t):=((u(\xi,t)-g(t))-k)_+$, where $g(t)=CR^{-sp}\int_{t_0-r^{sp}}^t\textup{Tail}^{p-1}(u_+(s);R,\xi_0)ds$. Also, we denote 
\begin{align*}
    \psi^i(\xi)\eta^j(t) \mbox{  as }(\psi^i\eta^j)(\xi,t)\mbox{ for some non-negative real numbers } i,j
\end{align*}
Then (\ref{eq4.14}) becomes
\begin{align}\label{eq4.16}
    \frac{1}{2}&\underbrace{\int_{t_1}^{t_2}\iint_{B_r\times B_r}|u(\xi,t)-u(\eta,t)|^{p-2}(u(\xi,t)-u(\eta,t))\Big((w_+^\varepsilon\psi^p\nu^2)^\varepsilon(\xi,t)-(w_+^\varepsilon\psi^p\nu^2)^\varepsilon(\eta,t)\Big)K(\xi,\eta,t)d\xi\,d\eta\,dt}_{D_1^\varepsilon}\nonumber\\
    &\quad+\underbrace{\int_{t_1}^{t_2}\int_{\hn\setminus B_r}\int_{B_r}|u(\xi,t)-u(\eta,t)|^{p-2}(u(\xi,t)-u(\eta,t))(w_+^\varepsilon\psi^p\nu^2)^\varepsilon(\xi,t)K(\xi,\eta,t)d\xi\,d\eta\,dt}_{D_2^\varepsilon}\nonumber\\
    &\quad\quad+\underbrace{\int_{t_1+\varepsilon/2}^{t_2+\varepsilon/2}\int_{B_r}u^\varepsilon_t(\xi,t)(w_+^\varepsilon\psi^p\nu^2)(\xi,t)d\xi\,dt}_{D_3^\varepsilon}+\overline{E}(\varepsilon)\nonumber\\
    &\quad\quad\quad\leq \underbrace{\int_{B_r}\Big\{u^\varepsilon(\xi,t_2-\varepsilon/2)(w_+^\varepsilon\psi^p\nu^2)(\xi,t_2-\varepsilon/2)-u(\xi,t_2)(w_+^\varepsilon\psi^p\nu^2)^\varepsilon(\xi,t_2)\Big\}d\xi}_{D_4^\varepsilon}\nonumber\\
    &\quad\quad\quad\quad+\underbrace{\int_{B_r}\Big\{u(\xi,t_1)(w_+^\varepsilon\psi^p\nu^2)^\varepsilon(\xi,t_1)-u^\varepsilon(\xi,t_1+\varepsilon/2)(w_+^\varepsilon\psi^p\nu^2)(\xi,t_1+\varepsilon/2)\Big\}d\xi}_{D_5^\varepsilon}
\end{align}
where $E(\varepsilon)$ is now denoted as $\bar{E}(\varepsilon)$ with $\phi$ replaced with the expression in \eqref{eq4.15}. 

\paragraph{\textbf{Passage to limit as $\varepsilon\to 0$:}}

Now, at first, we need to estimate $D_i^\varepsilon$ separately for $i=1,2,3,4,5$. We begin with $D_3^\varepsilon$. Here, we have via integration by parts
\begin{align}\label{eq4.17}
    \int^{t_2-\varepsilon/2}_{t_1+\varepsilon/2}\int_{B_r}u_t^\varepsilon(\xi,t)(w_+^\varepsilon\psi^p\nu^2)(\xi,t)d\xi\,dt &= \int^{t_2-\varepsilon/2}_{t_1+\varepsilon/2}\int_{B_r}\partial_tw^\varepsilon_+w^\varepsilon_+\psi^p(\xi)\nu^2(t)d\xi\,dt\nonumber\\
    &\quad+\mathfrak{C}{\int_{t_1+\epsilon/2}^{t_2-\epsilon/2}\int_{B_r}R^{-sp}\;\textup{Tail}^{p-1}(u_+(t);R,x_0)(w^\epsilon_+\psi^p\nu^2)(\xi,t)d\xi\,dt}\nonumber\\
    &=\frac{1}{2}\int^{t_2-\varepsilon/2}_{t_1+\varepsilon/2}\int_{B_r}\partial_t[w^\varepsilon(\xi,t)^2_+]\psi^p(\xi)\nu^2(t)d\xi\,dt\nonumber\\
    &\quad+\mathfrak{C}{\int_{t_1+\epsilon/2}^{t_2-\epsilon/2}\int_{B_r}R^{-sp}\;\textup{Tail}^{p-1}(u_+(t);R,x_0)(w^\epsilon_+\psi^p\nu^2)(\xi,t)d\xi\,dt}\nonumber\\
    &= \frac{1}{2}\int_{B_r}w^\varepsilon_+(\xi,t_2-\varepsilon/2)^2\psi^p(\xi)\nu^2(t_2-\varepsilon/2)d\xi\nonumber\\
    &\quad-\frac{1}{2}\int_{B_r}w^\varepsilon_+(\xi,t_1+\varepsilon/2)^2\psi^p(\xi)\nu^2(t_1+\varepsilon/2)d\xi\nonumber\\
    &\quad- \int^{t_2-\varepsilon/2}_{t_1+\varepsilon/2}\int_{B_r} w^\varepsilon_+(\xi,t)^2\psi^p(\xi)\nu(t)d_t\nu(t)d\xi\,dt\nonumber\\
    &\quad+\mathfrak{C}\underbrace{\int_{t_1+\epsilon/2}^{t_2-\epsilon/2}\int_{B_r}R^{-sp}\;\textup{Tail}^{p-1}(u_+(t);R,x_0)(w^\epsilon_+\psi^p\nu^2)(\xi,t)d\xi\,dt}_{M}.
\end{align}
Let us estimate $M$ as follows
\begin{align*}
    M=& R^{-sp}\int_{t_1+\epsilon/2}^{t_2-\epsilon/2}{\textup{Tail}^{p-1}(u_+(t);R,\xi_0)}{\int_{B_r}(w^\epsilon_+\psi^p\nu^2)(\xi,t)d\xi}\;dt\\
    =&  R^{-sp}\int_{t_1}^{t_2}\underbrace{\mathbbm{1}_{[t_1+\epsilon/2,t_2-\epsilon/2]}{\textup{Tail}^{p-1}(u_+(t);R,\xi_0)}{\int_{B_r}(w^\epsilon_+\psi^p\nu^2)(\xi,t)d\xi}}_{G(t)}\;dt\\
    \end{align*}
    
Since $u\in L^\infty(T_1,T_2;L^{2}(B_r))$,  by \cref{L4.3}, $u^\varepsilon\in L^\infty(T_1,T_2;L^{2}(B_r))$ with $$\|u^\varepsilon\|_{L^\infty(T_1,T_2;L^{2}(B_r))}\leq \|u\|_{L^\infty(T_1,T_2;L^{2}(B_r))}.$$ 

Since $t\mapsto g(t)$ is a continuous function, $w^\varepsilon_+ \in L^\infty(T_1,T_2;L^{2}(B_r))$ and 
$$\|w_+^\varepsilon\|_{L^\infty(T_1,T_2;L^{2}(B_r))}\leq \|w_+\|_{L^\infty(T_1,T_2;L^{2}(B_r))}.$$ 

Consequently $$\|w_+^\varepsilon\|_{L^\infty(T_1,T_2;L^{1}(B_r))}\leq C \|w_+^\varepsilon\|_{L^\infty(T_1,T_2;L^{2}(B_r))}\leq C\|w_+\|_{L^\infty(T_1,T_2;L^{2}(B_r))}.$$ 

As a result, the integrand $G(t)$ is bounded a.e. in $t$.

\begin{align*}
|G(t)|&=\left|\mathbbm{1}_{[t_1+\epsilon/2,t_2-\epsilon/2]}{\textup{Tail}^{p-1}(u_+(t);R,\xi_0)}{\int_{B_r}(w^\epsilon_+\psi^p\nu^2)(\xi,t)d\xi}\right|\\
&\lesssim |\textup{Tail}^{p-1}(u_+(t);R,\xi_0)|\|w_+\|_{L^\infty(T_1,T_2;L^{2}(B_r))}.
\end{align*}

Moreover, we have by an argument similar to the proof of Lemma \ref{L4.2},   as $\varepsilon\rightarrow 0$ for every $t$, $\{w^\epsilon_+\psi^p\nu^2\}$ converges to  $w_+\psi^p\nu^2$ in $L^1(B_r)$. Therefore, by an application of dominated convergence theorem, we have
 \begin{align*}
     M\rightarrow \int_{t_1}^{t_2}\int_{B_r}R^{-sp}\;\textup{Tail}^{p-1}(u_+(t);R,\xi_0)(w_+\psi^p\nu^2)(\xi,t)d\xi\,dt\;\;\;\mbox{as}\;\varepsilon\rightarrow 0
 \end{align*}
 The arguments for convergences of the remaining terms in $D_3^\varepsilon$ are made similarly. Therefore, finally we have
\begin{align}\label{eq4.18}
    D_3^\varepsilon \rightarrow D_3 & :=  \frac{1}{2}\int_{B_r}w_+^2(\xi,t_2)\psi^p(\xi)\nu^2(t_2)d\xi-\frac{1}{2}\int_{B_r}w_+^2(\xi,t_1)\psi^p(\xi)\nu^2(t_1)d\xi\nonumber\\
    &\quad -\int^{t_2}_{t_1}\int_{B_r} w_+^2(\xi,t)\psi^p(\xi)\nu(t)d_t\nu(t)d\xi\,dt\nonumber\\
    &\qquad+\mathfrak{C}\int_{t_1}^{t_2}\int_{B_r}R^{-sp}\;\textup{Tail}^{p-1}(u_+(t);R,\xi_0)(w_+\psi^p\nu^2)(\xi,t)d\xi\,dt.
\end{align}

Now, let us consider $D_1^\varepsilon$
\begin{align}\label{eq4.19}
    D_1^\varepsilon &=\int_{t_1}^{t_2}\iint_{B_r\times B_r}{|u(\xi,t)-u(\eta,t)|^{p-2}(u(\xi,t)-u(\eta,t))}((w_+^\varepsilon\psi^p\nu^2)^\varepsilon(\xi,t)-(w_+^\varepsilon\psi^p\nu^2)^\varepsilon(\eta,t))d\mu\,dt\nonumber\\
    &=\underbrace{\int_{t_1}^{t_2}\iint_{B_r\times B_r}{|u(\xi,t)-u(\eta,t)|^{p-2}(u(\xi,t)-u(\eta,t))}((w_+^\varepsilon\psi^p\nu^2)^\varepsilon(\xi,t)-(w_+\psi^p\nu^2)(\xi,t))d\mu\,dt}_{J_1}\nonumber\\
    &+ \underbrace{\int_{t_1}^{t_2}\iint_{B_r\times B_r}{|u(\xi,t)-u(\eta,t)|^{p-2}(u(\xi,t)-u(\eta,t))}((w_+\psi^p\nu^2)(\eta,t)-(w_+^\varepsilon\psi^p\nu^2)^\varepsilon(\eta,t))d\mu\,dt}_{J_2}\nonumber\\
    &+ \underbrace{\int_{t_1}^{t_2}\iint_{B_r\times B_r}{|u(\xi,t)-u(\eta,t)|^{p-2}(u(\xi,t)-u(\eta,t))}((w_+\psi^p\nu^2)(\xi,t)-(w_+\psi^p\nu^2)(\eta,t))d\mu\,dt}_{D_1},
\end{align} where $d\mu = K(\xi,\eta,t)\,d\xi\,d\eta$. 

Now, let us consider $J_1$,
\begin{align}\label{eq4.20}
    J_1\lesssim \int_{t_1}^{t_2}\iint_{B_r\times B_r}\frac{|u(\xi,t)-u(\eta,t)|^{p-1}}{|\eta^{-1}\circ \xi|_{\hn}^{(\frac{Q}{p}+s)(p-1)}} \;\frac{|(w_+^\varepsilon\psi^p\nu^2)^\varepsilon(\xi,t)-(w_+\psi^p\nu^2)(\xi,t)|}{|\eta^{-1}\circ \xi|_{\hn}^{\frac{Q}{p}+s}}d\xi\,d\eta\,dt
\end{align}
Now, from Lemma \ref{L4.4}, we have $\{(w_+^\varepsilon \psi^p\nu^2)^\varepsilon\}_{\varepsilon\in (0,\varepsilon_0/2)}$ has an  $\varepsilon$-independent bound in the space $L^p((t_1,t_2);W^{s,p}(B_r))$ i.e. there exists some constant $c=c(Q,p)$ 
\begin{align}\label{eq4.21}
\Bigg|\Bigg|\frac{(w_+^\varepsilon\psi^p\nu^2)^\varepsilon(\xi,t)}{|\eta^{-1}\circ \xi|_\hn^{\frac{Q}{p}+s}}\Bigg|\Bigg|_{L^p(t_1,t_2;L^p(B_r\times B_r))}\leq c
\end{align}
and also considering pointwise convergence of $(w_+^\varepsilon\psi^p\nu^2)^\varepsilon\rightarrow (w_+\psi^p\nu^2)$ a.e. in $B_r\times (t_1,t_2)$. Then we have
\begin{align}\label{eq4.22}
    	\frac{(w_+^\varepsilon\psi^p\nu^2)^\varepsilon(\xi,t)}{|\eta^{-1}\circ \xi|_{\hn}^{\frac{Q}{p}+s}} \rightharpoonup \frac{(w_+\psi^p\nu^2)(\xi,t)}{|\eta^{-1}\circ \xi|_{\hn}^{\frac{Q}{p}+s}}\;\;\;\;\;\mbox{in }\;L^p(t_1,t_2;L^p(B_r\times B_r))
\end{align}
On the other hand, it is easy to check
\begin{align}\label{eq4.23}
    \frac{|u(\xi,t)-u(\eta,t)|^{p-2}(u(\xi,t)-u(\eta,t))}{|\eta^{-1}\circ \xi|_{\hn}^{(\frac{Q}{p}+s)(p-1)}}\in L^\frac{p}{p-1}(t_1,t_2;L^\frac{p}{p-1}(B_r\times B_r))
\end{align}
Combining (\ref{eq4.21}), (\ref{eq4.22}) and (\ref{eq4.23}), we have in (\ref{eq4.20})
\begin{align}\label{eq4.24}
    J_1\rightarrow 0\;\;\;\;\;\;\;\mbox{as }\varepsilon\rightarrow 0
\end{align}
By similar arguments, we have
\begin{align}\label{eq4.25}
    J_2\rightarrow 0\;\;\;\;\;\;\;\mbox{as }\varepsilon\rightarrow 0
\end{align}
Hence from (\ref{eq4.24}) and (\ref{eq4.25}), we have
from (\ref{eq4.19}) as $\varepsilon\rightarrow 0$
\begin{align}\label{eq4.26}
    D_1^\varepsilon \rightarrow D_1:= \int_{t_1}^{t_2}\iint_{B_r\times B_r}{|u(\xi,t)-u(\eta,t)|^{p-2}(u(\xi,t)-u(\eta,t))}((w_+\psi^p\nu^2)(\xi,t)-(w_+\psi^p\nu^2)(\eta,t))d\mu\,dt.
\end{align}
Consider $D_4^\varepsilon$
\begin{align}\label{eq4.27}
    D_4^\varepsilon&=\int_{B_r}\Big\{u^\varepsilon(\xi,t_2-\varepsilon/2)(w_+^\varepsilon\psi^p\nu^2)(\xi,t_2-\varepsilon/2)-u(\xi,t_2)(w_+^\varepsilon\psi^p\nu^2)^\varepsilon(\xi,t_2)\Big\}d\xi\nonumber\\
    &=\int_{B_r}\Big(u^\varepsilon(\xi,t_2-\varepsilon/2)-u(\xi,t_2)\Big)(w_+^\varepsilon\psi^p\nu^2)(\xi,t_2-\varepsilon/2)d\xi\nonumber\\
    & \quad+\int_{B_r}u(\xi,t_2)\Big\{(w_+^\varepsilon\psi^p\nu^2)(\xi,t_2-\varepsilon/2)-(w_+\psi^p\nu^2)(\xi,t_2)\Big\}d\xi\nonumber\\
    &\qquad+\int_{B_r}u(\xi,t_2)\Big\{(w_+\psi^p\nu^2)(\xi,t_2)-(w_+^\varepsilon\psi^p\nu^2)^\varepsilon(\xi,t_2)\Big\}d\xi
\end{align}
Since $u\in C(t_1,t_2;L^2(B_r))$, we use \cref{L4.3} and \cref{L4.5} successively to get
\begin{align}\label{eq4.28}
    D_4^\varepsilon\rightarrow 0\;\;\;\;\;\;\mbox{as }\;\;\; \varepsilon\rightarrow 0
\end{align}
By a similar argument, we can conclude the following
\begin{align}\label{eq4.29}
    D_5^\varepsilon\rightarrow 0\;\;\;\;\;\; \mbox{as }\;\;\;\varepsilon\rightarrow 0
\end{align}
We will now pass to the limit in the expression $\bar{E}(\varepsilon)$. Integrating by parts, we can rewrite $\bar{E}(\varepsilon)$ as follows, where $\phi(\xi,\tau)=(w^\varepsilon\psi^p\nu^2)(\xi,\tau)$.
\begin{align*}
    \bar{E}(\varepsilon)&= -\int_{B_r}\int_{t_1-\varepsilon/2}^{t_1+\varepsilon/2}\Bigg(\frac{1}{\varepsilon}\int_{t_1}^{\tau+\varepsilon/2}u(\xi,t)\gamma\Big(\frac{\tau-t}{\varepsilon}\Big)dt\Bigg)\phi_\tau(\xi,\tau)d\tau d\xi  \\
    &\quad-\int_{B_r}\int_{t_2-\varepsilon/2}^{t_2+\varepsilon/2}\Bigg(\frac{1}{\varepsilon}\int_{\tau-\varepsilon/2}^{t_2}u(\xi,t)\gamma\Big(\frac{\tau-t}{\varepsilon}\Big)dt\Bigg)\phi_\tau(\xi,\tau)d\tau d\xi\\
    & =-\intbr[r]\Bigg(\frac{1}{\varepsilon}\int_{t_1}^{t_1+\varepsilon}u(\xi,t)\;\gamma\Big(\frac{t_1-t}{\varepsilon}+\frac{1}{2}\Big)dt\Bigg)(w^\varepsilon\psi^p\nu^2)(\xi,t_1+\varepsilon/2)d\xi \\
    &\quad+ \intbr[r]\int_{t_1-\varepsilon/2}^{t_1+\varepsilon/2}\Bigg(\frac{1}{\varepsilon^2}\int_{t_1}^{\tau+\varepsilon/2}u(\xi,t)\gamma_\tau\Big(\frac{\tau-t}{\varepsilon}\Big)dt\Bigg)(w^\varepsilon\psi^p\nu^2)(\xi,\tau)d\tau d\xi\\
    &\qquad+\intbr[r]\Bigg(\frac{1}{\varepsilon}\int_{t_2-\varepsilon}^{t_2}u(\xi,t)\;\gamma\Big(\frac{t_2-t}{\varepsilon}-\frac{1}{2}\Big)dt\Bigg)(w^\varepsilon\psi^p\nu^2)(\xi,t_2-\varepsilon/2)d\xi \\
    &\quad\qquad- \intbr[r]\int_{t_2-\varepsilon/2}^{t_2+\varepsilon/2}\Bigg(\frac{1}{\varepsilon^2}\int_{\tau-\varepsilon/2}^{t_2}u(\xi,t)\gamma_\tau\Big(\frac{\tau-t}{\varepsilon}\Big)dt\Bigg)(w^\varepsilon\psi^p\nu^2)(\xi,\tau)d\tau d\xi.
    \end{align*}
    Further, by a change of variables, we can rewrite $\bar{E}(\varepsilon)$ as follows:
    \begin{align*}
    \bar{E}(\varepsilon)& =-\intbr[r]\Bigg(\int_{-1/2}^{1/2}u(\xi,t_1-\varepsilon\alpha+\varepsilon/2)\;\gamma(\alpha)d\alpha\Bigg)(w^\varepsilon\psi^p\nu^2)(\xi,t_1+\varepsilon/2)d\xi\\
    &\quad+\intbr[r]\int_{-1/2}^{1/2}\Bigg(\int_{-1/2}^\alpha u(\xi,\varepsilon\alpha+t_1-\varepsilon\beta)\;\gamma_\tau(\beta)d\beta\Bigg)\phi(\xi,\varepsilon\alpha+t_1)d\alpha d\xi\\
    &\qquad+\intbr[r]\Bigg(\int_{-1/2}^{1/2}u(\xi,t_2-\varepsilon\alpha-\varepsilon/2)\;\gamma(\alpha)d\alpha\Bigg)(w^\varepsilon\psi^p\nu^2)(\xi,t_2-\varepsilon/2)d\xi\\
    &\qquad\quad-\intbr[r]\int_{-1/2}^{1/2}\Bigg(\int_{\alpha}^{1/2} u(\xi,\varepsilon\alpha+t_2-\varepsilon\beta)\;\gamma_\tau(\beta)d\beta\Bigg)(w^\varepsilon\psi^p\nu^2)(\xi,\varepsilon\alpha+t_2)d\alpha d\xi\\
    &=\underbrace{-\int_{-1/2}^{1/2}\gamma(\alpha)\Bigg\{\intbr[r]u(\xi,t_1-\varepsilon\alpha+\varepsilon/2)(w^\varepsilon\psi^p\nu^2)(\xi,t_1+\varepsilon/2)d\xi\Bigg\}d\alpha}_{J_1}\\
    &\quad+\underbrace{\int_{-1/2}^{1/2}\int_{-1/2}^\alpha \gamma_\tau(\beta)\Bigg\{\intbr[r]u(\xi,\varepsilon\alpha+t_1-\varepsilon\beta)(w^\varepsilon\psi^p\nu^2)(\xi,\varepsilon\alpha+t_1)\Bigg\}d\beta d\alpha}_{J_2}\\
    &\qquad+\underbrace{\int_{-1/2}^{1/2}\gamma(\alpha)\Bigg\{\intbr[r]u(\xi,t_2-\varepsilon\alpha-\varepsilon/2)(w^\varepsilon\psi^p\nu^2)(\xi,t_2-\varepsilon/2)d\xi\Bigg\}d\alpha}_{J_3}\\
    &\quad\qquad-\underbrace{\int_{-1/2}^{1/2}\int_{\alpha}^{1/2} \gamma_\tau(\beta)\Bigg\{\intbr[r]u(\xi,\varepsilon\alpha+t_2-\varepsilon\beta)(w^\varepsilon\psi^p\nu^2)(\xi,\varepsilon\alpha+t_2)\Bigg\}d\beta d\alpha}_{J_4}.
\end{align*}
In the last line, we have used Fubini's theorem. Now consider each part separately.
\begin{equation*}
	\begin{aligned}
    J_1& \stackrel{\phantom{Theorem 111}}{=} \int_{-1/2}^{1/2}\gamma(\alpha)\Bigg\{\intbr[r]u(\xi,t_1-\varepsilon\alpha+\varepsilon/2)w^\varepsilon_+(\xi,t_1+\varepsilon/2)\psi^p(\xi)d\xi\Bigg\}\nu^2(t_1+\varepsilon/2)d\alpha\\
    & \stackrel{\phantom{Theorem 111}}{\leq} \int_{-1/2}^{1/2}\gamma(\alpha)\Bigg\{\|u(\cdot,t_1-\varepsilon\alpha+\varepsilon/2)\|_{L^2(B_r)}\|w^\varepsilon_+(\cdot,t_1+\varepsilon/2)\|_{L^2(B_r)}\Bigg\}\nu^2(t_1+\varepsilon/2)d\alpha\\
    & \stackrel{\phantom{Theorem 111}}{\leq} \int_{-1/2}^{1/2}\gamma(\alpha)\Bigg\{\|u\|_{L^\infty_tL^2_\xi}\|w^\varepsilon_+\|_{L^\infty_tL^2_\xi}\Bigg\}\nu^2(t_1+\varepsilon/2)d\alpha\\
    & \stackrel{\cref{L4.3}}{\leq} \int_{-1/2}^{1/2}\gamma(\alpha)\Bigg\{\|u\|_{L^\infty_tL^2_\xi}\|w_+\|_{L^\infty_tL^2_\xi}\Bigg\}\nu^2(t_1+\varepsilon/2)d\alpha
    \end{aligned}
\end{equation*}
By pointwise convergence of $\nu(t_1+\varepsilon/2)\to\nu(t_1)=0$, we have as $\varepsilon\to 0$
\begin{align}\label{eq4.30}
    J_1\rightarrow 0.
\end{align}
By similar arguments, we can conclude the followings 
\begin{equation}\label{eq4.31}
    \mbox{ as}\;\;\;\varepsilon\rightarrow 0\;\;\;
    \begin{cases}
		& J_2\rightarrow 0,\\
		& J_3\rightarrow \int_{-1/2}^{1/2}\gamma(\alpha)\Bigg\{\intbr[r] u(\xi,t_2)(w_+\psi^p\nu^2)(\xi,t_2)d\xi\Bigg\}d\alpha,\\
		& J_4\rightarrow - \int_{-1/2}^{1/2}\int_{\alpha}^{1/2}\gamma_\tau(\beta)\Bigg\{\intbr[r]u(\xi,t_2)(w_+\psi^p\nu^2)(\xi,t_2)d\xi\Bigg\}d\beta d\alpha
	\end{cases}
\end{equation}
Hence, we have as $\varepsilon\rightarrow 0$ and $\int_{-1/2}^{1/2}\gamma(\xi)d\xi=1$
\begin{align}\label{eq4.325}
    \bar{E}(\varepsilon)&\rightarrow \int_{-1/2}^{1/2}\gamma(\alpha)d\alpha\intbr[r]u(\xi,t_2)(w_+\psi^p\nu^2)(\xi,t_2)d\xi- \int_{-1/2}^{1/2}\int_{\alpha}^{1/2}\gamma_\tau(\beta)d\beta d\alpha\intbr[r]u(\xi,t_2)(w_+\psi^p\nu^2)(\xi,t_2)d\xi\nonumber\\
    &=2\intbr[r]u(\xi,t_2)\phi(\xi,t_2)d\xi \nonumber\\
    &:= \mathbb{E}
\end{align}
Now consider $D_2^\varepsilon$. Before proceeding, let us denote
\begin{align*}
    D_2:= \int_{t_1}^{t_2}\int_{\hn\setminus B_r}\intbr[r]|u(\xi,t)-u(\eta,t)|^{p-2}(u(\xi,t)-u(\eta,t))(w_+\psi^p\nu^2)(\xi,t)d\mu\,dt
\end{align*}
Now consider
\begin{align}\label{eq4.33}
    D_2^\varepsilon-D_2&\lesssim \int_{t_1}^{t_2}\int_{\hn\setminus B_r}\intbr[r]\frac{|u(\xi,t)-u(\eta,t)|^{p-1}}{|\eta^{-1}\circ \xi|^{Q+sp}_{\hn}}\big|(w_+^\varepsilon\psi^p\nu^2)^\varepsilon(\xi,t)-(w_+\psi^p\nu^2)(\xi,t)\big|d\xi\,d\eta\,dt\nonumber\\
    &\leq c \int_{t_1}^{t_2}\intbr[r]\big|(w_+^\varepsilon\psi^p\nu^2)^\varepsilon(\xi,t)-(w_+\psi^p\nu^2)(\xi,t)\big|\Bigg\{\int_{\hn\setminus B_r}\frac{|u(\xi,t)|^{p-1}+|u(\eta,t)|^{p-1}}{|\eta^{-1}\circ \xi|_{\hn}^{Q+sp}}d\eta\Bigg\}d\xi\,dt\nonumber\\
    &\leq c\underbrace{\int_{t_1}^{t_2}\intbr[r]\big|(w_+^\varepsilon\psi^p\nu^2)^\varepsilon(\xi,t)-(w_+\psi^p\nu^2)(\xi,t)\big|\big|u(\xi,t)\big|^{p-1}\Bigg\{\int_{\hn\setminus B_r}\frac{d\eta}{|\eta^{-1}\circ \xi|_{\hn}^{Q+sp}}\Bigg\}d\xi\,dt}_{J_1}\nonumber\\
    &\qquad+c \underbrace{\int_{t_1}^{t_2}\intbr[r]\big|(w_+^\varepsilon\psi^p\nu^2)^\varepsilon(\xi,t)-(w_+\psi^p\nu^2)(\xi,t)\big|\Bigg\{\int_{\hn\setminus B_r}\frac{|u(\eta,t)|^{p-1}}{|\eta^{-1}\circ \xi|_{\hn}^{Q+sp}}\;d\eta\Bigg\}d\xi\,dt}_{J_2}
\end{align}
Now, let us calculate an inequality which will be useful to estimate the rest part of $D_2^\varepsilon$. Let $\xi\in \mbox{supp }(\psi)\subset B_r$. By triangle inequality in Heisenberg norm
\begin{align}\label{eq4.34}
    |\eta^{-1}\circ \xi|_{\hn}\geq |\eta^{-1}\circ \xi_0|_{\hn}-|\xi_0^{-1}\circ \xi|_{\hn}
\end{align}
If $\eta\in \hn\setminus B_r(\xi_0)$ and $\xi\in \mbox{supp }(\psi)\subset B_{\frac{r+l}2}(\xi_0)$. Then we obtain
\begin{align}\label{eq4.35}
    |\xi_0^{-1}\circ \xi|_{\hn}<\frac{r+l}{2}\;\;\;\;\mbox{and}\;\;\;\;|\eta^{-1}\circ \xi_0|_{\hn}>r
\end{align}
Then (\ref{eq4.35}) implies from (\ref{eq4.34})
\begin{align}\label{eq4.36}
    |\eta^{-1}\circ \xi|_{\hn}\geq \frac{r-l}{2}
\end{align}
From (\ref{eq4.34}) and (\ref{eq4.36})
\begin{align}\label{eq4.37}
    |\eta^{-1}\circ \xi|_{\hn}>\frac{r-l}{2r}\;|\eta^{-1}\circ \xi_0|_{\hn}
\end{align}
Using the inequality (\ref{eq4.37}) and \cite[Lemma 2.6]{MPPP23}
\begin{align*}
    J_1&\leq c\Bigg(\frac{r}{r-l}\Bigg)^{Q+sp} \int_{t_1}^{t_2}\Bigg[\intbr[r]\big|(w_+^\varepsilon\psi^p\nu^2)^\varepsilon(\xi,t)-(w_+\psi^p\nu^2)(\xi,t)\big|\big|u(\xi,t)\big|^{p-1}d\xi\int_{\hn\setminus B_r}\frac{d\eta}{|\xi_0^{-1}\circ \eta|^{Q+sp}_{\hn}}\Bigg]dt\\
    & \leq \Bigg(\frac{r}{r-l}\Bigg)^{Q+sp} r^{-sp}\int_{t_1}^{t_2}\Bigg(\int_{B_r}\big|(w_+^\varepsilon\psi^p\nu^2)^\varepsilon(\xi,t)-(w_+\psi^p\nu^2)(\xi,t)\big|^p d\xi\Bigg)^{1/p}\Bigg(\intbr[r]|u(\xi,t)|^pd\xi\Bigg)^\frac{p-1}{p}dt
\end{align*}
Since $u\in L^p(t_1,t_2; L^p(B_r))$. So using the Lemma \ref{L4.2}, we can conclude
\begin{align*}
    J_1\rightarrow 0\;\;\;\;\;\;\mbox{as}\;\;\;\;\varepsilon\rightarrow 0
\end{align*}
Using the inequality (\ref{eq4.37}), we get 
\begin{align}\label{eq4.38}
    	J_2&\leq c\Bigg(\frac{r}{r-l}\Bigg)^{Q+sp} \int_{t_1}^{t_2}\Bigg[\intbr[r]\big|(w_+^\varepsilon\psi^p\nu^2)^\varepsilon(\xi,t)-(w_+\psi^p\nu^2)(\xi,t)\big|d\xi\int_{\hn\setminus B_r}\frac{|u(\eta,t)|^{p-1}}{|\xi_0^{-1}\circ\eta|^{Q+sp}_{\hn}}d\eta\Bigg]dt.
\end{align}
By an argument similar to the passage to limit for the term $M$ in \eqref{eq4.17}, we obtain
\begin{align*}
    	J_2\rightarrow 0\;\;\;\;\;\mbox{as}\;\;\;\;\varepsilon\rightarrow 0
\end{align*}
Consequently, we have as $\varepsilon\rightarrow 0$
\begin{align}\label{eq4.40}
    D_2^\varepsilon\rightarrow D_2
\end{align}
Finally (\ref{eq4.18}), (\ref{eq4.26}), (\ref{eq4.28}), (\ref{eq4.29}), (\ref{eq4.325}) and (\ref{eq4.40}) implies (\ref{eq4.16}) as follows
\begin{align}\label{eq4.41}
    \frac{1}{2}D_1+\frac{1}{2}D_2+D_3+\mathbb{E}\leq 0
\end{align}
we shall estimate $D_1, D_2, D_3$ and $\mathbb{E}$ separately to have the required estimate.\\
Before moving to the estimates, let's construct a set as follows
\begin{align*}
    \mbox{For every }t\in I,\;\;A_+^g(k,r,t):=\{\xi\in B_r(\xi_0): u(\xi,t)-g(t)>k\}
\end{align*}
\textbf{Estimation of $D_3$ : } Since $\nu(t_1)=0$ and $\nu(t_2)=1$, we obtain
\begin{align}\label{eq4.32}
    D_3 &=\frac{1}{2}\intbr[r](w_+^2\psi^p)(\xi,t_2)d\xi-\int_{t_1}^{t_2}\intbr[r](w_+^2\psi^p\nu^2)(\xi,t)d_t\nu(t)\;d\xi\,dt\nonumber\\
    &\quad+\mathfrak{C}\int_{t_1}^{t_2}\int_{B_r}R^{-sp}\;\textup{Tail}^{p-1}(u_+(t);R,x_0)(w_+\psi^p\nu^2)(\xi,t)d\xi\,dt
\end{align}
\textbf{Estimation of $\mathbb{E}$: } Since $\nu(t_2)=1$, we get
\begin{align*}
    \mathbb{E}&=2\intbr[r]u(\xi,t_2)w_+(\xi,t_2)\psi^p(\xi)\nu^2(t_2)d\xi\\
    &=2\int_{A_+^g(k,r,t)}u(\xi,t_2) w_+(\xi,t_2)\psi^p(\xi)d\xi
\end{align*}
\begin{align}\label{eq4.43}
    \geq 2\int_{A_+^g(k,r,t)} w^2_+(\xi,t_2)\psi^p(\xi)d\xi
\end{align}
we now consider the contribution from the double integration over the domain $B_r\times B_r$.\\
\textbf{Estimation of $D_1$ : } Recall
\begin{align}\label{eq4.44}
    	D_1 \gtrsim \int_{t_1}^{t_2}\iint_{B_r\times B_r}\frac{|u(\xi,t)-u(\eta,t)|^{p-2}(u(\xi,t)-u(\eta,t))}{|\eta^{-1}\circ \xi|_{\hn}^{Q+sp}}\big((w_+\psi^p\nu^2)(\xi,t)-(w_+\psi^p\nu^2)(\eta,t)\big)\;d\xi\,d\eta\,dt
\end{align}
Initially, we want to consider the numerator part of the integrand of (\ref{eq4.44}). Let $I=(t_1,t_2)$. Now we will consider several conditions and their outcomes as far as the numerator of the integrand is concern.
\begin{itemize}
\item If $x,y \notin A_+^g(k,r,t)$, then $w_+(\xi,t),w_+(\eta,t)=0$. Hence
\begin{align}\label{eq5.45}
    |u(\xi,t)-u(\eta,t)|^{p-2}(u(\xi,t)-u(\eta,t))\big((w_+\psi^p\nu^2)(\xi,t)-(w_+\psi^p\nu^2)(\eta,t)\big)=0
\end{align}
\item If $x\in A_+^g(k,r,t)$ and $y\in B_r\setminus A_+^g(k,r,t)$, then
\begin{align}\label{eq5.46}
    |u(\xi,t)-u(\eta,t)|^{p-2}(u(\xi,t)-u(\eta,t))&\big((w_+\psi^p\nu^2)(\xi,t)-(w_+\psi^p\nu^2)(\eta,t)\big)\nonumber\\
    &=\big(w_+(\xi,t)+w_-(\eta,t)\big)^{p-1}(w_+\psi^p\nu^2)(\xi,t)
\end{align}
Here we consider two cases.
\begin{itemize}
\item[$\circ$] If $p\geq 2$, then using  \cite[Lemma 4.1]{C17} with $\theta=0$.
\begin{align*}
    \big(w_+(\xi,t)+w_-(\eta,t)\big)^{p-1}(w_+\psi^p\nu^2)(\xi,t)&\geq \big[w_+^{p-1}(\xi,t)+w_-^{p-1}(\eta,t)\big](w_+\psi^p\nu^2)(\xi,t)\\
    &=\big[|w_+(\xi,t)-w_+(\eta,t)|^p+w_-^{p-1}(\eta,t)w_+(\xi,t)\big]\psi^p(\xi)\nu^2(t)
\end{align*}
\item[$\circ$] If $p\in (1,2)$, then using Jensen's Lemma
\begin{align*}
    \big(w_+(\xi,t)+w_-(\eta,t)\big)^{p-1}(w_+\psi^p\nu^2)(\xi,t)\geq 2^{p-2}\big[|w_+(\xi,t)-w_+(\eta,t)|^p+w_-^{p-1}(\eta,t)w_+(\xi,t)\big]\psi^p(\xi)\nu^2(t)
\end{align*}
Considering both cases, we can conclude in (\ref{eq5.46})
\begin{align}\label{eq5.47}
    |u(\xi,t)-u(\eta,t)|^{p-2}&(u(\xi,t)-u(\eta,t))\big((w_+\psi^p\nu^2)(\xi,t)-(w_+\psi^p\nu^2)(\eta,t)\big)\nonumber\\
    &\geq \min\{1,2^{p-2}\}\big[|w_+(\xi,t)-w_+(\eta,t)|^p+w_-^{p-1}(\eta,t)w_+(\xi,t)\big]\psi^p(\xi)\nu^2(t)
\end{align}
\end{itemize}
\item If $x,y\in A_+^g(\xi,r,t)$, we have 
\begin{align*}
    &|u(\xi,t)-u(\eta,t)|^{p-2}(u(\xi,t)-u(\eta,t))\big((w_+\psi^p\nu^2)(\xi,t)-(w_+\psi^p\nu^2)(\eta,t)\big)\\
    	&\quad=
	\begin{cases}
		& \big(w_+(\xi,t)-w_+(\eta,t)\big)^{p-1}\big(w_+(\xi,t)\psi^p(\xi)-w_+(\eta,t)\psi^p(\eta)\big)\nu^2(t)\;\;\;\;\;\;\;\;\mbox{if}\;\;u(\xi,t)\geq u(\eta,t)\\
	   & \big(w_+(\eta,t)-w_+(\xi,t)\big)^{p-1}\big(w_+(\eta,t)\psi^p(\eta)-w_+(\xi,t)\psi^p(\xi)\big)\nu^2(t)\;\;\;\;\;\;\;\;\mbox{if}\;\;u(\eta,t)\geq u(\xi,t)
	\end{cases}
\end{align*}
For $u(\xi,t)\geq u(\eta,t)$, we have the following.
\begin{itemize}
\item[$\circ$] If $\psi(\xi)\geq \psi(\eta)$, then
\begin{align*}
    \big(w_+(\xi,t)-w_+(\eta,t)\big)^{p-1}\big(w_+(\xi,t)\psi^p(\xi)-w_+(\eta,t)\psi^p(\eta)\big)\nu^2(t)\geq \big(w_+(\xi,t)-w_+(\eta,t)\big)^p\psi^p(\xi)\nu^2(t)
\end{align*}
\item[$\circ$] If $\psi(\xi)\leq \psi(\eta)$, then 
\begin{align*}
    &\big(w_+(\xi,t)-w_+(\eta,t)\big)^{p-1}\big(w_+(\xi,t)\psi^p(\xi)-w_+(\eta,t)\psi^p(\eta)\big)\nu^2(t)\\
    &\quad=\Big[\big(w_+(\xi,t)-w_+(\eta,t)\big)^p\psi^p(\eta)-\underbrace{\big(w_+(\xi,t)-w_+(\eta,t)\big)^{p-1}w_+(\xi,t)\big(\psi^p(\eta)-\psi^p(\xi)\big)}_{M}\Big]\nu^2(t)
\end{align*}
\end{itemize}
Now, apply \cite[Lemma 4.3]{C17} with $a=\psi(\eta), b=\psi(\xi)$ and $\varepsilon=\frac{1}{2}\frac{w_+(\xi,t)-w_+(\eta,t)}{w_+(\xi,t)}$ to obtain
\begin{align*}
    M\leq \frac{1}{2}\big(w_+(\xi,t)-w_+(\eta,t)\big)^p\psi^p(\eta)+\big[2(p-1)\big]^{p-1}w_+^p(\xi,t)\big(\psi(\eta)-\psi(\xi)\big)^p
\end{align*}
\end{itemize}
Combining all the cases, we obtain
\begin{align}\label{eq5.48}
    &|u(\xi,t)-u(\eta,t)|^{p-2}(u(\xi,t)-u(\eta,t))\big((w_+\psi^p\nu^2)(\xi,t)-(w_+\psi^p\nu^2)(\eta,t)\big)\nonumber\\
    &\quad\geq \Big[c\big(w_+(\xi,t)-w_+(\eta,t)\big)^p\max\big\{\psi^p(\xi),\psi^p(\xi)\big\}-c(p)w_+^p(\xi,t)|\psi(\xi)-\psi(\eta)|^p\Big]\nu^2(t)
\end{align}
we have obtained (\ref{eq5.48}) for $u(\xi,t)\geq u(\eta,t)$. By changing the role of $\xi$ and $\eta$, we can handle the situation for $u(\xi,t)\leq u(\eta,t)$. Then finally we have
\begin{align}\label{eq5.49}
    &|u(\xi,t)-u(\eta,t)|^{p-2}(u(\xi,t)-u(\eta,t))\big((w_+\psi^p\nu^2)(\xi,t)-(w_+\psi^p\nu^2)(\eta,t)\big)\nonumber\\
    &\quad\geq \Big[c\big(w_+(\xi,t)-w_+(\eta,t)\big)^p\max\big\{\psi^p(\xi),\psi^p(\xi)\big\}-c(p)\max\big\{w_+^p(\xi,t),w_+^p(\eta,t)\big\}|\psi(\xi)-\psi(\eta)|^p\Big]\nu^2(t)
\end{align}
Consequently,
\begin{align}\label{eq5.50}
    	D_1&\geq c \Bigg[\int_{t_1}^{t_2}\iint_{B_r\times B_r}\frac{|w_+(\xi,t)-w_+(\eta,t)|^p}{|\eta^{-1}\circ \xi|^{Q+sp}_{\hn}}\max\big\{\psi^p(\xi),\psi^p(\eta)\big\}\nu^2(t)\; d\xi\,d\eta\,dt\nonumber\\
        &\quad+\int_{t_1}^{t_2}\intbr[r](w_+\psi^p\nu^2)(\xi,t)\Bigg\{\intbr[r]\frac{w^{p-1}_-(\eta,t)}{|\eta^{-1}\circ \xi|^{Q+sp}_{\hn}}d\eta\Bigg\}d\xi\,dt\nonumber\\
        &\qquad-\int_{t_1}^{t_2}\iint_{B_r\times B_r}\max\big\{w^p_+(\xi,t),w^p_+(\eta,t)\big\}\frac{|\psi(\xi)-\psi(\eta)|^p}{|\eta^{-1}\circ \xi|^{Q+sp}_{\hn}}\nu^2(t)\;d\xi\,d\eta\,dt\Bigg]
\end{align}
\textbf{Estimation of $D_2$: } Recall
\begin{align}\label{eq5.51}
    D_2&\gtrsim\int_{t_1}^{t_2}\int_{\hn\setminus B_r}\intbr[r]\gag[u](w_+\psi^p\nu^2)(\xi,t)\;d\xi\,d\eta\,dt\nonumber\\
    &= \int_{t_1}^{t_2}\intbr[r](w_+\psi^p\nu^2)(\xi,t)\Bigg\{\int_{\hn\setminus B_r}\gag[u]d\eta\Bigg\}d\xi\,dt\nonumber\\
    &\geq c\underbrace{\int_{t_1}^{t_2}\int_{A_+^g(k,r,t)}(w_+\psi^p\nu^2)(\xi,t)\Bigg\{\int_{(B_R(\xi)\cap \{u(\xi,t)>u(\eta,t)\})\setminus B_r}\frac{\big(u(\xi,t)-u(\eta,t)\big)^{p-1}}{|\eta^{-1}\circ \xi|_{\hn}^{Q+sp}}d\eta\Bigg\}d\xi\,dt}_{M_1}\nonumber\\
    &\quad-c\underbrace{\int_{t_1}^{t_2}\int_{A_+^g(k,\frac{r+l}{2},t)}(w_+\psi^p\nu^2)(\xi,t)\Bigg\{\int_{\{u(\eta,t)>u(\xi,t)\}\setminus B_r}\frac{\big(u(\eta,t)-u(\xi,t)\big)^{p-1}}{|\eta^{-1}\circ \xi|_{\hn}^{Q+sp}}d\eta\Bigg\}d\xi\,dt}_{M_2}
\end{align}
{\em Estimation of $M_1$:}
\begin{align*}
    M_1&\geq \int_{t_1}^{t_2}\intbr[r](w_+\psi^p\nu^2)(\xi,t)\Bigg[\int_{\big(B_R(\xi)\cap A_-^g(k,t)\big)\setminus B_r}\frac{\big(w_+(\xi,t)+w_-(\eta,t)\big)^{p-1}}{|\eta^{-1}\circ \xi|_{\hn}^{Q+sp}}d\eta\Bigg]d\xi\,dt\\
    &=\int_{t_1}^{t_2}\intbr[r](w_+\psi^p\nu^2)(\xi,t)\Bigg[\int_{B_R(\xi)\setminus B_r}\frac{w_-^{p-1}(\eta,t)}{|\eta^{-1}\circ \xi|_{\hn}^{Q+sp}}d\eta\Bigg]d\xi\,dt
\end{align*}
{\em Estimation of $M_2$:}
\begin{align*}
    M_2&\leq \int_{t_1}^{t_2}\int_{B_\frac{r+s}{2}}(w_+\psi^p\nu^2)(\xi,t)\Bigg\{\int_{\hn\setminus B_r}\frac{w_+^{p-1}(\eta,t)}{|\eta^{-1}\circ \xi|_{\hn}^{Q+sp}}d\eta\Bigg\}d\xi\,dt\\
    &\leq c\int_{t_1}^{t_2}\intbr[r](w_+\psi^p\nu^2)(\xi,t)\Bigg\{\int_{\hn\setminus B_r}\frac{w_+^{p-1}(\eta,t)}{|\eta^{-1}\circ \xi|_{\hn}^{Q+sp}}d\eta\Bigg\}d\xi\,dt
\end{align*}
combining all these estimates, we conclude in (\ref{eq5.51})
\begin{align*}
    D_2\geq \underbrace{c\int_{t_1}^{t_2}\intbr[r](w_+\psi^p\nu^2)(\xi,t)\Bigg[\int_{\big(B_R(\xi)\cap A_-^g(k,t)\big)\setminus B_r}\frac{\big(w_+(\xi,t)+w_-(\eta,t)\big)^{p-1}}{|\eta^{-1}\circ \xi|_{\hn}^{Q+sp}}d\eta\Bigg]d\xi\,dt}_{\mbox{\small (we can ignore this term which is on the left side)}}
\end{align*}
\begin{align}\label{eq5.52}
    -c \int_{t_1}^{t_2}\intbr[r](w_+\psi^p\nu^2)(\xi,t)\Bigg\{\int_{\hn\setminus B_r}\frac{w_+^{p-1}(\eta,t)}{|\eta^{-1}\circ \xi|_{\hn}^{Q+sp}}d\eta\Bigg\}d\xi\,dt
\end{align}
Now substituting estimations of $D_1,D_2,D_3$ and $\mathbb{E}$ in \cref{eq4.41}, we obtain
\begin{align}\label{eq5.53}
    \intbr[r]&(w_+^2\psi^p)(\xi,t_2)d\xi+\underbrace{\int_{t_0-\theta_2}^{t_0}\iint_{B_r\times B_r}\frac{|w_+(\xi,t)-w_+(\eta,t)|^p}{|\eta^{-1}\circ \xi|^{Q+sp}_{\hn}}\max\big\{\psi^p(\xi),\psi^p(\eta)\big\}\nu^2(t)\; d\xi\,d\eta\,dt}_{I}\nonumber\\
    &\qquad+\int_{t_0-\theta_2}^{t_0}\intbr[r](w_+\psi^p\nu^2)(\xi,t)\Bigg\{\intbr[r]\frac{w^{p-1}_-(\eta,t)}{|\eta^{-1}\circ \xi|^{Q+sp}_{\hn}}d\eta\Bigg\}d\xi\,dt\nonumber\\
    &\lesssim \underbrace{\int_{t_0-\theta_2}^{t_0}\iint_{B_r\times B_r}\max\big\{w^p_+(\xi,t),w^p_+(\eta,t)\big\}\frac{|\psi(\xi)-\psi(\eta)|^p}{|\eta^{-1}\circ \xi|^{Q+sp}_{\hn}}\nu^2(t)\;d\xi\,d\eta\,dt}_{II}\nonumber\\
    &\quad+\int_{t_0-\theta_2}^{t_0}\intbr[r](w_+\psi^p\nu^2)(\xi,t)d_t\nu(t)\;d\xi\,dt +\int_{t_0-\theta_2}^{t_0}\intbr[r](w_+\psi^p\nu^2)(\xi,t)\Bigg\{\int_{\hn\setminus B_r}\frac{w_+^{p-1}(\eta,t)}{|\eta^{-1}\circ \xi|_{\hn}^{Q+sp}}d\eta\Bigg\}d\xi\,dt\nonumber\\
    &\qquad -\mathfrak{C}\int_{t_0-\theta_2}^{t_0}\int_{B_r}R^{-sp}\;\textup{Tail}^{p-1}(u_+(t);R,x_0)(w_+\psi^p\nu^2)(\xi,t)d\xi\,dt
\end{align}
First, by the estimate for $I_2$ in \cite[Lemma 3.3]{Ding2021}, we get
\begin{align}
    I \geq \frac12\int_{t_0-\theta_2}^{t_0}\iint_{B_r\times B_r}\frac{|(w_+\psi\nu^{\frac2p})(\xi,t)-(w_+\psi\nu^{\frac2p})(\eta,t)|^p}{|\eta^{-1}\circ \xi|^{Q+sp}_{\hn}}\; d\xi\,d\eta\,dt - c\,II
\end{align}
By the properties of $\psi$, we get
\begin{align}\label{eq5.54}
    \frac{|\psi(\xi)-\psi(\eta)|}{|\eta^{-1}\circ \xi|_{\hn}}\leq \frac{c}{r-l}
\end{align}
Since $0\leq \eta\leq 1$, we have
\begin{align}\label{eq5.55}
    II&\leq \int_{t_0-\theta_2}^{t_0}\Bigg[\iint_{B_r\times B_r}\max\big\{w^p_+(\xi,t),w^p_+(\eta,t)\big\}\frac{1}{(r-s)^p}\frac{1}{|\eta^{-1}\circ \xi|^{-(1-s)p+Q}_{\hn}}\;d\xi\,d\eta\Bigg]dt\nonumber\\
    &\leq \frac{2c}{(r-l)^p}\int_{t_0-\theta_2}^{t_0}\Bigg[\iint_{B_r\times B_r}w_+^p(\xi,t)\frac{1}{|\eta^{-1}\circ \xi|^{-(1-s)p+Q}_{\hn}}\;d\xi\,d\eta\Bigg]dt\nonumber\\
    &= \frac{2c}{(r-l)^p}\int_{t_0-\theta_2}^{t_0}\intbr[r]w_+^p(\xi,t)\Bigg[\intbr[r]\frac{d\eta}{|\eta^{-1}\circ \xi|^{Q-(1-s)p}_{\hn}}\Bigg]d\xi\,dt\nonumber\\
    &\leq \frac{cr^{p(1-s)}}{(r-l)^p}\int_{t_0-\theta_2}^{t_0}\intbr[r]w_+^p(\xi,t)\Bigg[\intbr[r]\frac{d\eta}{|\eta^{-1}\circ \xi|^{Q}_{\hn}}\Bigg]d\xi\,dt\nonumber\\
    &=c\Bigg(\frac{r^{1-s}}{r-l}\Bigg)^p\|w_+\|^p_{L^p(t_0-\theta_2,t_0;L^p(B_r))}.
\end{align}
Lastly, we separately choose $t_2=t_0$ and $t_2\in (t_1,t_0]$ with the property
\begin{align}\label{eq5.57}
    \intbr[r](w_+^2\psi^p)(\xi,t_2)d\xi= \esssup_{t_1<t<t_0}\intbr[r](w_+^2\psi^p)(\xi,t)d\xi
\end{align}
\end{proof}

Now, we will do a further calculation by splitting the term on the right hand side (RHS) involving the tail into a middle distance term and a faraway tail term, which will cancel with the final term in the previous Caccioppoli inequality.

\begin{lemma}\label{L4.7}
    \textbf{(Caccioppolli Inequality II)} Let $(\xi_0,t_0)\in \Omega_T$. Let $\sigma, l,r,R\in \mathbb{R}_+$ satisfy $0<\sigma R\leq l<\frac{l+r}{2}\leq \frac{(5\sigma+3)R}{8}\leq R$. Let $u$ be a local sub-solution to the problem (\ref{maineq}).  Consider two non-negative functions $\psi\in C^\infty_0(B_{(r+l)/2}(\xi_0))$ with $0\leq \psi\leq 1$ in $\hn$ and $\nu\in C^\infty(\mathbb{R})$ such that $\nu(t)=0$ iff $t\leq t_0-\theta_2$ and $\nu(t)=1$ if $t\geq t_0-\theta_1$ where ball $B_r\equiv B_r(\xi_0)$ such that $\bar{B_r}\subset\Omega$ and $0\leq\theta_1<\theta_2$ satisfying $[t_0-\theta_2,t_0]\subset (0,T)$. Let $w:= u-g-k$ with a level $k\in \mathbb{R}$ and $g(t)=\mathfrak{C}R^{-sp}\int_{t_0-\theta_2}^t\textup{Tail}^{p-1}(u_+(s);R,\xi_0)ds$. Then there exist constants $c,\mathfrak{C}>0$ depending on $\mathtt{data}$ such that
	\begin{align*}
		\esssup_{t_0-\theta_2<t<t_0}&\int_{B_r}(w_+^2\psi^p)(\xi,t)d\xi\\
        &\quad+\int_{t_0-\theta_2}^{t_0}\iint_{B_r\times B_r}\frac{|w_+(\xi,t)-w_+(\eta,t)|^p}{|\eta^{-1}\circ \xi|^{Q+sp}_{\hn}}\max\big\{\psi^p(\xi),\psi^p(\eta)\big\}\nu^2(t)\; d\xi\,d\eta\,dt\\
    &\qquad	\leq c\Bigg(\frac{r^{1-s}}{r-l}\Bigg)^p\|w_+\|^p_{L^p(t_0-\theta_2,t_0;L^p(B_r))}+	\int_{t_0-\theta_2}^{t_0}\int_{B_r}(w_+^2\psi^p\nu^2)(\xi,t)d_t\nu(t)\;d\xi\,dt\\
    &\quad\quad\quad+cR^{-sp} \left(\frac{R}{r-l}\right)^{Q+sp}\int_{t_0-\theta_2}^{t_0}\int_{B_r}w_+(\xi,t)\psi^p(\xi)\nu^2(t)\Bigg(\fint_{B_R} u^{p-1}_+(\eta,t)\,d\eta\Bigg)d\xi\,dt.
    \end{align*}
\end{lemma}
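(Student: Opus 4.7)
The plan is to start from the Caccioppoli estimate of Lemma \ref{L4.6} and dissect its right-hand side by splitting the offending nonlocal integral $\int_{\hn\setminus B_r}w_+^{p-1}(\eta,t)/|\eta^{-1}\circ\xi|^{Q+sp}\,d\eta$ into an integral over $B_R\setminus B_r$ (a \emph{middle distance} piece) and one over $\hn\setminus B_R$ (a \emph{faraway} piece). The middle distance piece will produce the averaged term appearing in the statement, whereas the faraway piece will be absorbed by the negative tail correction $-\mathfrak{C}R^{-sp}\int_{t_0-\theta_2}^{t_0}\int_{B_r}\textup{Tail}^{p-1}(u_+(t);R,\xi_0)(w_+\psi^p\nu^2)\,d\xi\,dt$ already sitting on the right-hand side of Lemma \ref{L4.6}, provided $\mathfrak{C}$ is chosen large enough. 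I will also drop the nonnegative term involving $w_-^{p-1}$ over $B_r$ from the left-hand side, which only weakens the inequality and yields the compact form stated in Lemma \ref{L4.7}.

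For the middle distance piece, I recycle the geometric lower bound $|\eta^{-1}\circ\xi|\geq (r-l)/2$ for $\xi\in\textup{supp}\,\psi\subset B_{(r+l)/2}(\xi_0)$ and $\eta\notin B_r(\xi_0)$, derived in \eqref{eq4.36}, together with the pointwise domination $w_+\leq u_+$, which holds because $g\geq 0$ and the estimate will be applied at nonnegative level $k$. The inner integral is then bounded by $c(r-l)^{-(Q+sp)}\int_{B_R}u_+^{p-1}(\eta,t)\,d\eta$, and multiplying and dividing by $|B_R|\simeq R^Q$ rewrites this as $cR^{-sp}(R/(r-l))^{Q+sp}\fint_{B_R}u_+^{p-1}(\eta,t)\,d\eta$, which, after integration against $w_+\psi^p\nu^2$, is precisely the last term on the right-hand side of Lemma \ref{L4.7}.

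For the faraway piece, the geometric hypothesis $\frac{l+r}{2}\leq \frac{(5\sigma+3)R}{8}$ is essential: for $\xi\in B_{(r+l)/2}(\xi_0)$ and $\eta\in\hn\setminus B_R(\xi_0)$, the pseudo-triangle inequality gives $|\eta^{-1}\circ\xi|\geq |\eta^{-1}\circ\xi_0|-\frac{r+l}{2}$, and dividing through by $|\eta^{-1}\circ\xi_0|>R$ produces the crucial bound $|\eta^{-1}\circ\xi|\geq \frac{5(1-\sigma)}{8}|\eta^{-1}\circ\xi_0|$. Inserting this and again using $w_+\leq u_+$, the faraway integral is at most $c(1-\sigma)^{-(Q+sp)}R^{-sp}\textup{Tail}^{p-1}(u_+(t);R,\xi_0)$ by direct comparison with the definition of the tail. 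Selecting $\mathfrak{C}$ strictly larger than this constant kills the faraway contribution, leaving only the middle distance term on the right-hand side.

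The principal obstacle is the careful bookkeeping of constants, since $\mathfrak{C}$ must be fixed once and for all independently of the iteration parameters in the ensuing De Giorgi argument; the assumption chain $0<\sigma R\leq l<\frac{l+r}{2}\leq \frac{(5\sigma+3)R}{8}\leq R$ is precisely what forces the faraway constant to depend only on $\mathtt{data}$ and $\sigma$ and thereby makes a uniform choice of $\mathfrak{C}$ possible. A secondary technical point is the pointwise domination $w_+\leq u_+$, which follows from nonnegativity of $g$ (immediate from its definition as a time integral of a nonnegative tail) and from applying the resulting inequality at nonnegative levels $k$, both available in the local boundedness proof.
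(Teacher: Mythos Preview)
Your proposal is correct and follows essentially the same route as the paper: split the offending tail integral into a middle-distance piece over $B_R\setminus B_r$ (bounded via $|\eta^{-1}\circ\xi|\gtrsim r-l$ and $w_+\leq u_+$) and a faraway piece over $\hn\setminus B_R$ (bounded via $|\eta^{-1}\circ\xi|\geq \tfrac{5(1-\sigma)}{8}|\eta^{-1}\circ\xi_0|$), then choose $\mathfrak{C}=c\bigl(\tfrac{8}{5(1-\sigma)}\bigr)^{Q+sp}$ so that the faraway piece is exactly cancelled by the negative tail term from Lemma~\ref{L4.6}. Your observation that $w_+\leq u_+$ requires $g\geq 0$ and $k\geq 0$ is a welcome clarification that the paper leaves implicit; note also, as you correctly point out, that $\mathfrak{C}$ inherits dependence on $\sigma$ through this construction.
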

\begin{proof}
  We work on the following term in the RHS of the Caccioppoli inequality from \cref{L4.6}.
    \begin{align}\label{cacc2.1}
        A:=c\Biggl[\int_{t_0-\theta_2}^{t_0}\int_{B_r}(w_+\psi^p\nu^2)(\xi,t)\Bigg(\int_{\hn\setminus B_r}\frac{w^{p-1}_+(\eta,t)}{|\eta^{-1}\circ \xi|_{\hn}^{Q+sp}}d\eta\Bigg)d\xi\,dt\Biggr].
    \end{align}
    Let $\xi\in \mbox{supp }(\psi)\subset B_r$. By triangle inequality in Heisenberg norm
\begin{align}\label{cacc2eq4.34}
    |\eta^{-1}\circ \xi|_{\hn}\geq |\eta^{-1}\circ \xi_0|_{\hn}-|\xi_0^{-1}\circ \xi|_{\hn} = |\eta^{-1}\circ \xi_0|_{\hn}\left(1-\frac{|\xi_0^{-1}\circ \xi|_{\hn}}{|\eta^{-1}\circ \xi_0|_{\hn}}\right).
\end{align}
  We write
     \begin{align*}
        \sup_{\xi\in B_{\frac{r+l}{2}(\xi_0)}}&\int_{\hn\setminus B_r}\frac{w^{p-1}_+(\eta,t)}{|\eta^{-1}\circ \xi|_{\hn}^{Q+sp}}d\eta = \sup_{\xi\in B_{\frac{r+l}{2}(\xi_0)}}\int_{B_R\setminus B_r}\frac{w^{p-1}_+(\eta,t)}{|\eta^{-1}\circ \xi|_{\hn}^{Q+sp}}d\eta + \sup_{\xi\in B_{\frac{r+l}{2}(\xi_0)}}\int_{\hn\setminus B_R}\frac{w^{p-1}_+(\eta,t)}{|\eta^{-1}\circ \xi|_{\hn}^{Q+sp}}d\eta\\
        & \leq R^{-sp}\left(\frac{2R}{r-l}\right)^{Q+sp}\fint_{B_R} w^{p-1}_+(\eta,t)\,d\eta + \left(\frac{8}{5(1-\sigma)}\right)^{Q+sp}\int_{\hn\setminus B_R}\frac{w^{p-1}_+(\eta,t)}{|\eta^{-1}\circ \xi_0|_{\hn}^{Q+sp}}d\eta\\
        & = R^{-sp}\left(\frac{2R}{r-l}\right)^{Q+sp}\fint_{B_R} u^{p-1}_+(\eta,t)\,d\eta + \left(\frac{8}{5(1-\sigma)}\right)^{Q+sp}R^{-sp}\, \textup{Tail}^{p-1}(u_+(t),R,\xi_0),
    \end{align*} where in the second inequality, we have used \cref{cacc2eq4.34} and $\frac{r+l}{2}\leq \frac{(5\sigma + 3)R}{8}$.
    Substituting the last inequality into \cref{cacc2.1}, we get
    \begin{align*}
        A &\leq cR^{-sp} \Biggl[\int_{t_0-\theta_2}^{t_0}\int_{B_r}(w_+\psi^p\nu^2)(\xi,t)\Bigg( R^{-sp}\left(\frac{2R}{r-l}\right)^{Q+sp} \fint_{B_R} u^{p-1}_+(\eta,t)\,d\eta \\
        &\qquad+ \left(\frac{8}{5(1-\sigma)}\right)^{Q+sp}R^{-sp}\textup{Tail}^{p-1}(u_+(t),R,\xi_0)\Bigg)d\xi\,dt\Biggr]\\
        &=c R^{-sp} \frac{(2R)^{Q+sp}}{(r-l)^{Q+sp}} \int_{t_0-\theta_2}^{t_0}\int_{B_r}w_+(\xi,t)\psi^p(\xi)\nu^2(t)\Bigg(\fint_{B_R} u^{p-1}_+(\eta,t)\,d\eta\Bigg)d\xi\,dt\\
        &\qquad+ c\left(\frac{8}{5(1-\sigma)}\right)^{Q+sp}R^{-sp}\int_{t_0-\theta_2}^{t_0}\int_{B_r}w_+(\xi,t)\psi^p(\xi)\nu^2(t)\textup{Tail}^{p-1}(u_+(t),R,\xi_0)d\xi\,dt
    \end{align*}

    The last term in the previous estimate gets cancelled once we choose $\mathfrak{C} = c\left(\frac{8}{5(1-\sigma)}\right)^{Q+sp}$ in the term    
    \begin{align}
        B:= \mathfrak{C} R^{-sp} \int_{t_0-\theta_2}^{t_0}\int_{B_r} \psi^p(\xi)\nu^2(t) w_+(\xi,t) \textup{Tail}\,^{p-1}(u_+(t),R,\xi_0)\,d\xi\,dt.
    \end{align}
\end{proof}

 \section{Local boundedness estimate}\label{localbound}
  This section is dedicated to proving a local boundedness estimate for \cref{maineq}. Having obtained a version of Caccioppoli inequality where the tail term is only present in the De Giorgi levels, we are now in a position to use the methods in \cite{BK24} to obtain the local boundedness estimate. 
  
\subsection{Parameters} Our choice of parameters have been inspired from those in \cite{BK24}. Let us define
\begin{align}\label{eq6.1}
    \kappa=\frac{s}{Q}
\end{align}
and 
\begin{align}\label{eq6.2}
    \hat{q}=\mathcal{B}(1+\kappa)
\end{align}
Then we note that
\begin{align}\label{eq6.3}
    \frac{1}{\hat{q}}+\frac{1}{\hat{q}}\Bigg(\frac{s}{Q}+\frac{1}{2}-\frac1p\Bigg)=\frac{1}{2},\;\;\;\;\;\hat{q}\in (2,p_s)
\end{align}
and
\begin{align}\label{eq6.4}
    p_1\leq \mathcal{B}=\frac{p+2\kappa-\frac{(p-1)(p-2)}{q}}{1+\kappa} = 2 + \frac{1-\frac2p}{1+\kappa}\leq p_2
\end{align}
with the fact that
\begin{align*}
    \begin{cases}
        & 2\leq \mathcal{B}\leq p\;\;\;\;\;\mbox{if }p\geq 2\\
        & p\leq \mathcal{B}\leq 2\;\;\;\;\;\mbox{if }p< 2
    \end{cases}
\end{align*}
and also $p_s$ defined as
\begin{align}\label{eq6.5}
    p_s= \begin{cases}
        &\frac{Qp}{Q-sp}\;\;\;\; \mbox{if } Q>sp \\
          &\infty\;\;\;\;\;\;\;\;\;\mbox{if } Q\leq sp
    \end{cases}
\end{align}
Now we define the following set. For $x_0\in \Omega$, $\rho>0$ and $k\geq 0$ with $B_\rho=B_\rho(x_0)\subset \Omega$ and $t\in (0,T)$ , we set
\begin{align}\label{eq6.6}
    \mathcal{A}^g_\pm (t;x_0,\rho,k)=\{x\in B_\rho| (u(\xi,t)-g(t)-k)_\pm>0\}
\end{align}
and also additionally we set
\begin{align}\label{eq6.8}
    \zeta=\frac{Q+2s}{Q+sp}
\end{align}
This $\zeta$ is introduced to handle singular and degenerate cases at the same time. 

In the next lemma, we refine the energy estimate of weak sub-solution u to (\ref{maineq}) in a suitable form which will be useful later.
\begin{lemma}\label{L6.1}
    Let $u$ be a weak sub-solution to the problem (\ref{maineq}). Then the following inequality holds for any $\mathcal{Q}_R(\xi_0,t_0)\Subset \Omega_T$ with $\rho\in (\sigma R,R)$ and $0<\rho<\frac{R+\rho}{2}\leq \frac{(5\sigma+3)R}{8}$.
\begin{align*}
    \int_{t_0-\rho^{sp}}^{t_0}&\iint_{B_\rho(x_0)\times B_\rho(\xi_0)} \frac{|w_+(\xi,t)-w_+(\eta ,t)|^p}{|\eta^{-1}\circ \xi|^{Q+sp}}d\xi d\eta dt+ \esssup_{t\in [t_0-\rho^{sp},t_0]}\int_{B_{\rho}(x_0)} w^2_+(\xi,t)d\xi\\
    &\leq c\frac{R^{p(1-s)}}{(R-\rho)^p}\iint_{\mathcal{Q}_{R}(z_0)}w^p_+(\xi,t)d\xi dt+\frac{c}{R^{sp}-\rho^{sp}}\iint_{\mathcal{Q}_{R}(z_0)}w^2_+(\xi,t)d\xi dt\\
    &\qquad+ck^{p_1}\Big(\frac{R}{R-\rho}\Big)^{\mathcal{B}'(Q+sp)}R^{-s}\Bigg(\int_{t_0-R^{sp}}^{t_0}|\mathcal{A}^g_+(t;\xi_0,R,k)|^p dt\Bigg)^\frac{1}{p},
\end{align*}
with $k$ satisfying
\begin{align*}
    k\geq \Big[ \fint_{t_0-R^{sp}}^{t_0} \left( \fint_{B_R(\xi_0)} u_+^{p-1}(\eta,t) \,d\eta\right)^{\frac{p}{p-1}}\,dt \Big]^\frac{\mathcal{B}'(p-1)}{pp_1}.
\end{align*}
\end{lemma}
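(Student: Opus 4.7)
The starting point will be the Caccioppoli inequality of \cref{L4.7}. I will apply it with inner radius $l = \rho$, outer radius $r = R$, temporal thresholds $\theta_1 = \rho^{sp}$ and $\theta_2 = R^{sp}$, and standard cutoffs $\psi \in C_c^\infty(B_{(R+\rho)/2}(\xi_0))$ with $\psi \equiv 1$ on $B_\rho$ and $\|\nabla_{\hn}\psi\|_\infty \lesssim (R-\rho)^{-1}$, and $\nu \in C^\infty(\mathbb{R})$ with $\nu \equiv 1$ on $[t_0-\rho^{sp}, t_0]$, $\nu \equiv 0$ for $t \leq t_0 - R^{sp}$, and $\|\partial_t \nu\|_\infty \lesssim (R^{sp}-\rho^{sp})^{-1}$. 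Because $\psi \equiv 1$ on $B_\rho$ and $\nu \equiv 1$ on the inner time interval, the left-hand side of \cref{L4.7} will dominate the quantity on the left of the present statement, after dropping the positive term $\int w_+ \int w_-^{p-1}/|\eta^{-1}\circ\xi|^{Q+sp}$ appearing in \cref{L4.7}. The first two right-hand side terms of \cref{L4.7} will become, after plugging in the bounds on $\partial_t \nu$ and on the radii, precisely the first two right-hand side terms of the present inequality.

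The core of the plan is to bound the middle-distance tail term
\begin{align*}
A := c R^{-sp}\left(\tfrac{R}{R-\rho}\right)^{Q+sp}\int_{t_0-R^{sp}}^{t_0}\int_{B_R} w_+(\xi,t)\,\psi^p(\xi)\,\nu^2(t)\,F(t)\,d\xi\,dt,
\end{align*}
where $F(t) := \fint_{B_R} u_+^{p-1}(\eta,t)\,d\eta$. The hypothesis on $k$ is tailored precisely so that $\|F\|_{L^{p/(p-1)}(t_0-R^{sp}, t_0)}^{p/(p-1)} \leq R^{sp}\, k^{pp_1/[\mathcal{B}'(p-1)]}$, i.e.\ $\|F\|_{L^{p/(p-1)}} \leq R^{s(p-1)} k^{p_1/\mathcal{B}'}$. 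I will then apply H\"older's inequality in time with conjugate exponents $p/(p-1)$ and $p$ to separate $F(t)$ from the spatial integral, producing the factor $R^{s(p-1)} k^{p_1/\mathcal{B}'}$ together with the $L^p_t$-norm of $t \mapsto \int_{B_R} w_+ \psi^p\,d\xi$. Since $w_+$ vanishes outside the level set $\mathcal{A}^g_+(t;\xi_0,R,k)$, this spatial integral is effectively over that set.

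To extract the remaining factor $k^{p_1/\mathcal{B}}\,(R/(R-\rho))^{(\mathcal{B}'-1)(Q+sp)}\,\|\mathcal{A}^g_+\|_{L^p_t}$, I will use $w_+ \leq u_+$ together with the observation that $u_+ > k$ on $\mathcal{A}^g_+$ (which follows from $g \geq 0$), then apply H\"older in space with exponents tuned to $(\mathcal{B}, \mathcal{B}')$ to convert powers of $u_+$ into powers of $k$ at the cost of a factor of $|\mathcal{A}^g_+|$. Collecting contributions should yield $k^{p_1/\mathcal{B}'}\cdot k^{p_1/\mathcal{B}} = k^{p_1}$, $(R/(R-\rho))^{Q+sp}\cdot(R/(R-\rho))^{(\mathcal{B}'-1)(Q+sp)} = (R/(R-\rho))^{\mathcal{B}'(Q+sp)}$, and $R^{-sp}\cdot R^{s(p-1)} = R^{-s}$, matching the target. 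The main obstacle will be the precise exponent bookkeeping in this last step: the exponents $\mathcal{B}, \mathcal{B}', p_1$ defined in \eqref{defp1p2B} were crafted so that they close up, but verifying this requires choosing the correct H\"older exponents in the spatial estimate, following the ansatz of \cite{BK24}.
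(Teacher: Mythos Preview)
Your setup is correct and matches the paper: applying \cref{L4.7} with $l=\rho$, $r=R$, $\theta_2=R^{sp}$, choosing cutoffs as you describe, and using H\"older in time to extract $\|F\|_{L^{p/(p-1)}}\leq R^{s(p-1)}k^{p_1/\mathcal{B}'}$ from the hypothesis on $k$ is exactly how the argument begins. The exponent bookkeeping in your final paragraph is also correct.

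The gap is in the step you describe as ``apply H\"older in space with exponents tuned to $(\mathcal{B},\mathcal{B}')$ to convert powers of $u_+$ into powers of $k$.'' This cannot work as a direct estimate: on the level set $\mathcal{A}^g_+$ you only know $w_+>0$ (equivalently $u_+>k+g\geq k$), which gives a \emph{lower} bound on $u_+$, not an upper bound. There is no a priori control on $\int_{\mathcal{A}^g_+} w_+\,d\xi$ in terms of $k^{\text{positive power}}|\mathcal{A}^g_+|$, because $w_+$ is unbounded above on the level set---indeed, obtaining such a bound is essentially the content of the local boundedness theorem itself. No spatial H\"older inequality will manufacture the missing factor $k^{p_1/\mathcal{B}}(R/(R-\rho))^{(\mathcal{B}'-1)(Q+sp)}$ out of $\|\int_{B_R}w_+\psi^p\|_{L^p_t}$ alone.

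What the paper does instead is an \emph{absorption argument}. After reaching $cR^{-s}(R/(R-\rho))^{Q+sp}k^{p_1/\mathcal{B}'}\|w_+(\psi\nu^{2/p})^{p_2/2}\|_{L^{1,p}}$, one applies Young's inequality with exponents $(\mathcal{B}',\mathcal{B})$ to the product $\big[(R/(R-\rho))^{Q+sp}k^{p_1/\mathcal{B}'}\big]\cdot w_+(\psi\nu^{2/p})^{p_2/2}$, producing the target term $c\epsilon^{-1/(\mathcal{B}-1)}k^{p_1}(R/(R-\rho))^{\mathcal{B}'(Q+sp)}R^{-s}\|\chi_{\mathcal{A}^g_+}\|_{L^{1,p}}$ plus an error $\epsilon R^{-s}\|(w_+(\psi\nu^{2/p})^{p_2/2})^{\mathcal{B}}\|_{L^{1,p}}$. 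The error is then controlled via H\"older and the parabolic Sobolev inequality \cref{T4.10} (with $q=2$, $r=m$ chosen so that the exponent $\mathcal{B}$ matches) by the left-hand side quantities $J_{0,1}+J_{0,2}$ plus lower-order terms; taking $\epsilon$ small absorbs this into the left. This absorption via \cref{T4.10} is the missing idea in your plan.
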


\begin{proof}
    In the energy inequality from \cref{L4.7}, we choose $r=R$, $l=\rho$, and $\theta_2=R^{sp}$ to get
    \begin{align*}
	J_{0,1}+J_{0,2}&:=\int_{t_0-R^{sp}}^{t_0}\iint_{B_R\times B_R}\frac{|(w_+\psi\nu^{\frac2p})(\xi,t)-(w_+\psi\nu^{\frac2p})(\eta,t)|^p}{|\eta^{-1}\circ \xi|^{Q+sp}_{\hn}}\; d\xi\,d\eta\,dt\\
    &+\esssup_{t_0-R^{sp}<t<t_0}\int_{B_R}(w_+^2\psi^p)(\xi,t)d\xi\\
    &\qquad	\leq c\Bigg(\frac{R^{1-s}}{R-\rho}\Bigg)^p\|w_+\|^p_{L^p(t_0-R^{sp},t_0;L^p(B_R))}+	\int_{t_0-R^{sp}}^{t_0}\int_{B_R}(w_+^2\psi^p\nu^2)(\xi,t)d_t\nu(t)\;d\xi\,dt\\
    &\quad\quad\quad+\underbrace{cR^{-sp}\left(\frac{R}{R-\rho}\right)^{Q+sp}\int_{t_0-R^{sp}}^{t_0}\int_{B_R}w_+(\xi,t)\psi^p(\xi)\nu^2(t)\Bigg(\fint_{B_R} u^{p-1}_+(\eta,t)\,d\eta\Bigg)d\xi\,dt}_J.
    \end{align*}
    \paragraph{\textbf{Estimate of J}} We begin by noting that $G(t)=\fint_{B_R} u_+^{p-1}(\eta,t)\,d\eta\in L^{\infty,\frac{p}{p-1}}_{\textup{loc}}(\Omega_T)$. 

    We write and estimate $J$ as

    \begin{align*}
       J &= cR^{-sp} \left(\frac{R}{R-\rho}\right)^{Q+sp} \int_{t_0-R^{sp}}^{t_0}\int_{B_r}G(t)\,w_+(\xi,t)\psi^p(\xi)\nu^2(t)d\xi\,dt\\
       & \leq cR^{-sp} \left(\frac{R}{R-\rho}\right)^{Q+sp} \int_{t_0-R^{sp}}^{t_0}\int_{B_r}G(t)\,w_+(\xi,t)(\psi\nu^{\frac 2p})^{\frac{p_2}{2}}(\xi,t)d\xi\,dt,
    \end{align*} on account of $0\leq\psi\nu^{\frac2p}\leq 1$ and $\frac{p_2}2\leq p$.

    On applying H\"older's inequality and Young's inequality, we obtain the following sequence of estimates.

    \begin{align*}
        J & \leq cR^{-sp} \left(\frac{R}{R-\rho}\right)^{Q+sp} \|G\|_{L^{\infty,\frac{p}{p-1}}(\mathcal{Q}_R(\xi_0))} \|w_+(\psi\nu^{\frac 2p})^{\frac{p_2}{2}}\|_{L^{1,p}(\mathcal{Q}_R(\xi_0))}\\
        &\leq c \left(\frac{R}{R-\rho}\right)^{Q+sp} \|k^{\frac{p_1}{\mathcal{B}'}}R^{-s}w_+(\psi\nu^{\frac 2p})^{\frac{p_2}{2}}\|_{L^{1,p}(\mathcal{Q}_R(\xi_0))}\\
        &\leq \frac{ck^{p_1}}{\epsilon^{\frac{1}{\mathcal{B}-1}}}\left(\frac{R}{R-\rho}\right)^{\mathcal{B}'(Q+sp)}\frac{1}{R^s} \|\chi_{\{u>g(\cdot)+k\}}\|_{L^{1,p}(\mathcal{Q}_R(\xi_0))} + \epsilon\frac{1}{R^s}\left\|\left(w_+(\psi\nu^{\frac 2p})^{\frac{p_2}{2}}\right)^{\mathcal{B}}\right\|_{L^{1,p}(\mathcal{Q}_R(\xi_0))}.
    \end{align*}
    Once again using H\"older's inequality and \cref{T4.10} with $q=2$ and $m=pr$, we obtain
    \begin{align*}
        J & \leq \frac{ck^{p_1}}{\epsilon^{\frac{1}{\mathcal{B}-1}}}\left(\frac{R}{R-\rho}\right)^{\mathcal{B}'(Q+sp)}\frac{1}{R^s} \left(\int_{t_0-R^{sp}}^{t_0} |\mathcal{A}_+(t;\xi_0,R,g(t)+k)|^p\,dt \right)^{\frac1p}\\
        &\qquad + \epsilon\left\|w_+(\psi\nu^{\frac 2p})^{\frac{p_2}{2}}\right\|^{\mathcal{B}}_{L^{r,m}(\mathcal{Q}_R(\xi_0))}\\
        & \leq \frac{ck^{p_1}}{\epsilon^{\frac{1}{\mathcal{B}-1}}}\left(\frac{R}{R-\rho}\right)^{\mathcal{B}'(Q+sp)}\frac{1}{R^s} \left(\int_{t_0-R^{sp}}^{t_0} |\mathcal{A}_+(t;\xi_0,R,g(t)+k)|^p\,dt \right)^{\frac1p}\\
        &\qquad + \epsilon \left( \tilde{J}_{0,1} + \tilde{J}_{0,2} + R^{-sp} \iint_{\mathcal{Q}_R(\xi_0} w_+^p(\xi,t)\,d\xi\,dt \right),
    \end{align*}
    where
    \begin{align*}
        \tilde{J}_{0,1} + \tilde{J}_{0,2} &= \int_{t_0-R^{sp}}^{t_0}\iint_{B_R\times B_R}\frac{|(w_+(\psi\nu^{\frac2p})^{\frac{p_2}{2}})(\xi,t)-(w_+(\psi\nu^{\frac2p})^{\frac{p_2}{2}})(\eta,t)|^p}{|\eta^{-1}\circ \xi|^{Q+sp}_{\hn}}\; d\xi\,d\eta\,dt\\
    &\qquad+\esssup_{t_0-R^{sp}<t<t_0}\int_{B_R}(w_+(\psi\nu^{\frac2p})^{\frac{p_2}{2}})^2(\xi,t)d\xi.
    \end{align*}
    If $p_2=2$, then $\tilde{J}_{0,1} + \tilde{J}_{0,2} \leq {J}_{0,1} + {J}_{0,2} $ by $(\psi\nu^{\frac2p})^2\leq (\psi\nu^{\frac2p})^p$. On the other hand, if $p_2=p>2$, then $\tilde{J}_{0,2}=J_{0,2}$ and
    \begin{align*}
        \tilde{J}_{0,1} &\leq c \int_{t_0-R^{sp}}^{t_0}\iint_{B_R\times B_R}\frac{|(w_+\psi\nu^{\frac2p})(\xi,t)-(w_+\psi\nu^{\frac2p})(\eta,t)|^p(\psi\nu^{\frac2p})^{p(\frac{p_2}{2}-1)}(\xi,t)}{|\eta^{-1}\circ \xi|^{Q+sp}_{\hn}}\; d\xi\,d\eta\,dt\\
        &\qquad+\int_{t_0-R^{sp}}^{t_0}\iint_{B_R\times B_R}\frac{|(w_+\psi\nu^{\frac2p}(\eta,t)((\psi\nu^{\frac2p})^{\frac{p_2}{2}-1}(\xi,t)-(\psi\nu^{\frac2p})^{\frac{p_2}{2}-1}(\eta,t))|^p}{|\eta^{-1}\circ \xi|^{Q+sp}_{\hn}}\; d\xi\,d\eta\,dt\\
        & \leq c J_{0,1} + c \frac{R^{p(1-s)}}{(R-\rho)^p}\iint_{\mathcal{Q}_{R}(\xi_0)} w_+^p\,d\xi\,dt.
    \end{align*}
    In this estimate, we use a similar argument as in \cref{eq5.55}. Finally, take $\epsilon=\frac{1}{16c}$ to see that
    \begin{align*}
        J\leq ck^{p_1}\left(\frac{R}{R-\rho}\right)^{\mathcal{B}'(Q+sp)}\frac{1}{R^s} \|\chi_{\{u>g(\cdot)+k\}}\|_{L^{1,p}(\mathcal{Q}_R(\xi_0))} + \frac{1}{16}\left( J_{0,1} + J_{0,2} + \frac{R^{p(1-s)}}{(R-\rho)^p}\iint_{\mathcal{Q}_{R}(\xi_0)} w_+^p\,d\xi\,dt \right).
    \end{align*}
    Finally combining all the estimates together and sending $J_{0,1}$ and $J_{0,2}$ to the left, we get the estimate in the lemma.
\end{proof}

\subsection{Proof of local boundedness}\label{SS6.2}

 In this subsection, we prove local boundedness using De Giorgi iteration. This proof follows the one in \cite{BK24}.

 \begin{proof}(Proof of \cref{mainresult})
     Let $(\xi_0,t_0)\in\Omega_T$, $\rho>0$ and $\mathcal{Q}_{\rho,\tau}(z_0)=B_\rho(\xi_0)\times (t_0-\tau,t_0]$ with $z_0=(\xi_0,t_0)\in \hn\times \mathbb{R}$ , $\overline{B_\rho}(\xi_0)\subset \Omega$ and $[t_0-\tau,t_0]\in (0,T)$.
 For any $\xi,\eta\in\hn$ and $t\in (-1,0]$ , define
\begin{align}\label{eq6.7}
    \hat{u}(\xi,t)=u(\rho \xi+\xi_0,\rho^{sp}t+t_0).
\end{align} Then $\hat{u}$ is a weak subsolution to
\begin{align*}
    \partial \hat{u} + \mathcal{L}_{\tilde{K}} \hat{u}=0 \mbox{ in } \mathcal{Q}_1.
\end{align*}
 
 Define 
 \begin{align}\label{eq6.9}
     \hat{k}=\Big[ \fint_{-1}^{0} \left( \fint_{B_1(0)} \hat{u}_+^{p-1}(\eta,t) \,d\eta\right)^{\frac{p}{p-1}}\,dt \Big]^\frac{\mathcal{B}'(p-1)}{pp_1}+1.
 \end{align}
 Now, for any non-negative integer $j$, we set
 \begin{align}\label{eq6.10}
     \rho_j=\sigma+\frac{(1-\sigma)}{2^{j+1}}\;\;\;\mbox{and}\;\;\;k_j=d+d\Bigg(1-\frac{1}{2^j}\Bigg)
 \end{align}
where $d>2\hat{k}$ which will be determined later. Then it is obvious that
 \begin{align}\label{eq6.11}
     \rho_j-\rho_{j+1}=\frac{1-\sigma}{2^{j+2}}\;\;\;\mbox{and}\;\;\;k_{j+1}-k_j=\frac{d}{2^{j+1}}.
 \end{align}
 We also verify that
 \begin{align*}
     \frac{\rho_j+\rho_{j+1}}{2}= \sigma + \frac{3(1-\sigma)}{2^{i+3}} \leq \sigma + \frac{3(1-\sigma)}{8} = \frac{5\sigma}{8} + \frac38 < 1.
 \end{align*}
 Additionally, for iterative inequalities, we set
 \begin{align}\label{eq6.12}
     \begin{cases}
         & Y_j=\frac{1}{d^{p_1}}\fiint_{\mathcal{Q}_{\rho_j}}(\hat{u}-\hat{g}(t)-k_j)^{p_2}_+d\xi\,dt\\
         & Z_j=\Bigg(\fint_{-\rho_j^{sp}}^0\Big(\frac{|\hat{\mathcal{A}}^g_+(t;0,\rho_j,k_{j+1})|}{|B_{\rho_j}|}\Big)^p \,dt\Bigg)^\frac{1}{p(1+\kappa)}\\
         & \Pi_j=\fint_{-\rho_j^{sp}}^0\frac{|\hat{\mathcal{A}}^g_+(t;0,\rho_j,k_{j+1})|}{|B_{\rho_j}|}dt
     \end{cases}
 \end{align}
 where $\hat{\mathcal{A}}^g_\pm$ is equivalent to (\ref{eq6.6})  by changing $u$ with $\hat{u}$. 
 
 Observe from \cref{defp1p2B}, \cref{eq6.8} and \cref{eq6.12} that
 \begin{align}\label{lbesty}
     \frac{p\zeta}{p_2}\leq 1,\,\frac{p}{p_2} - \frac{Q}{Q+2s} > 0,
 \end{align} and
 \begin{align}\label{eq6.13}
     \Pi_j\leq (k_{j+1}-k_j)^{-p_2}d^{p_1}Y_j\leq C2^{jp_2}d^{p_1-p_2}Y_j.
 \end{align}
 Applying \cref{L6.1} with $\Omega_T = \mathcal{Q}_1$, $u=\hat{u}$, $z_0=0$, $\rho = \rho_{j+1}$, $R=\rho_j$, and $k=k_{j+1}$, we obtain

\begin{equation}\label{lbest2}
    \begin{aligned}
        &\left[(\hat{u}-g-k_{j+1})_+\right]^{p}_{V^{p,2}_s(\mathcal{Q}_{\rho_{j+1}})}\\
        &\qquad \leq c \frac{\rho_{j+1}^{p(1-s)}}{(\rho_j-\rho_{j+1})^p}\iint_{\mathcal{Q}_{\rho_j}} (\hat{u}-g-k_{j+1})_+^p\,d\xi\,dt +  \frac{c}{\rho_j^{sp}-\rho_{j+1}^{sp}}\iint_{\mathcal{Q}_{\rho_j}} (\hat{u}-g-k_{j+1})_+^2\,d\xi\,dt \\
        &\qquad\qquad + c\hat{k}^{p_1}\left(\frac{\rho_{j+1}}{\rho_j-\rho_{j+1}}\right)^{\mathcal{B}'(Q+sp)} \rho_j^{-s}\left(\int_{-\rho_j^{sp}}^0|\hat{\mathcal{A}}^g_+(t,0,\rho_j,k_{j+1})|^p\,dt\right)^{\frac1p} =: J_1 +J_2 + J_3.
    \end{aligned}
\end{equation}

We will now estimate each of the terms $J_1$, $J_2$ and $J_3$ on the right hand side (RHS).

\paragraph{\textbf{Estimate of $J_1$}} 

We have the following estimate for $J_1$.

\begin{align*}
    J_1 & \leq c \frac{\rho_{j+1}^{p}}{(\rho_j-\rho_{j+1})^p}\frac{1}{\rho_j^{sp}}\iint_{\mathcal{Q}_{\rho_j}} (\hat{u}-\hat{g}-k_{j+1})_+^p\,d\xi\,dt\\
    & \leq c\frac{2^{jp}}{(1-\sigma)^p} \frac{1}{\rho_j^{sp}} \iint_{\mathcal{Q}_j} \frac{(\hat{u}-\hat{g}-k_j)^{p_2}_+}{(k_{j+1}-k_j)^{p_2-p}} \,d\xi\,dt\\
    &\leq c\frac{2^{jp_2}}{(1-\sigma)^p}\frac{1}{\rho_j^{sp}}d^{-p_2+p}d^{p_1}Y_j|\mathcal{Q}_{\rho_j}|\\
    &\leq c\frac{2^{jp_2}}{(1-\sigma)^p}\frac{1}{\rho_j^{sp}}d^{p_1}Y_j|\mathcal{Q}_{\rho_j}|,
\end{align*} where, in the last inequality, we used $d\geq 1$ and $p_2\geq p$.

\paragraph{\textbf{Estimate of $J_2$}}

Observe that, since $\frac12\rho_j \leq \rho_{j+1}\leq \rho_j$, we have
\begin{align}\label{lbest1}
 \rho_j^{sp} - \rho_{j+1}^{sp} = sp \int_{\rho_j}^{\rho_{j+1}} t^{sp-1}\,dt \geq \frac{1}{c}\rho_j^{sp-1}(\rho_j-\rho_{j+1}).
\end{align} Using \cref{lbest1}, we get
\begin{align*}
        J_2 & \leq c \frac{\rho_{j+1}}{(\rho_j-\rho_{j+1})}\frac{1}{\rho_j^{sp}}\iint_{\mathcal{Q}_{\rho_j}} (\hat{u}-\hat{g}-k_{j+1})_+^2\,d\xi\,dt.
\end{align*}
Estimating similarly to $J_1$ and using $2\leq p_2$, we receive
\begin{align*}
    J_2 \leq c\frac{2^{jp_2}}{(1-\sigma)^p}\frac{1}{\rho_j^{sp}}d^{p_1}Y_j|\mathcal{Q}_{\rho_j}|
\end{align*}

\paragraph{\textbf{Estimate of $J_3$}} Using \cref{eq6.12}, we estimate
\begin{align*}
    J_3 \leq cd^{p_1}\left(\frac{2^j}{1-\sigma}\right)^{\mathcal{B}'(Q+sp)}\rho_j^{-s}\rho^{Q+s}Z_j^{1+\kappa} = cd^{p_1}\left(\frac{2^j}{1-\sigma}\right)^{\mathcal{B}'(Q+sp)}\frac{1}{\rho_j^{sp}}|\mathcal{Q}_{\rho_j}|Z_j^{1+\kappa}.
\end{align*}

We are ready to prove iterative inequalities for $Y_j$ and $Z_j$.

\paragraph{\textbf{Iterative Estimate for $Y_j$}} In the next sequence of estimates, we will successively use H\"older's inequality, \cref{T4.10} with $q=2$, $r=p(1+2s/Q)$ and $m=p(1+2s/Q)$, Young's inequality, \cref{defp1p2B}, \cref{eq6.8}, \cref{eq6.12} and \cref{lbesty}.
\begin{equation}
    \begin{aligned}
        Y_{j+1}^{\frac{p\zeta}{p_2}} & = d^{-\frac{p_1p\zeta}{p_2}}\left( \fiint_{\mathcal{Q}_{\rho_{j+1}}} (\hat{u}-\hat{g}-k_{j+1})_+^{p_2}\,d\xi\,dt \right)^{\frac{p\zeta}{p_2}}\\
        & \leq cd^{-\frac{p_1p\zeta}{p_2}}\Pi_j^{\frac{p\zeta}{p_2}-\frac{\zeta}{1+\frac{2s}{Q}}}\left( \fiint_{\mathcal{Q}_{\rho_{j+1}}} (\hat{u}-\hat{g}-k_{j+1})_+^{p\left(1+\frac{2s}{Q}\right)}\,d\xi\,dt \right)^{\frac{Q}{Q+sp}}\\
        &\leq cd^{-\frac{p_1p\zeta}{p_2}}\Pi_j^{\frac{p\zeta}{p_2}-\frac{\zeta}{1+\frac{2s}{Q}}}|\mathcal{Q}_{\rho_j}|^{-\frac{Q}{Q+sp}}[(\hat{u}-\hat{g}-k_{j+1})_+]^p_{V^{p,2}_s(\mathcal{Q}_{\rho_{j+1}})}\\
        &\qquad\qquad + cd^{-\frac{p_1p\zeta}{p_2}}\Pi_j^{\frac{p\zeta}{p_2}-\frac{\zeta}{1+\frac{2s}{Q}}}|\mathcal{Q}_{\rho_j}|^{-\frac{Q}{Q+sp}}\rho_{j+1}^{-sp}\|(\hat{u}-\hat{g}-k_{j+1})_+\|_{L^p(\mathcal{Q}_{\rho_{j+1}})}^p.
    \end{aligned}
    \end{equation}
    Now using \cref{eq6.13} and
    \begin{align*}
        0\leq\frac{p\zeta}{p_2}-\frac{\zeta}{1+\frac{2s}{Q}}\leq 1,\,\rho_{j+1}^{-sp}\leq \frac{\rho_{j+1}^{p(1-s}}{(\rho_j-\rho_{j+1})^p},\,Y_{j+1}\leq 2^{Q+1}Y_j,
    \end{align*} as well as
    \begin{align*}
        \|(\hat{u}-\hat{g}-k_{j+1})_+\|^p_{L^p(\mathcal{Q}_{\rho_{j+1}})}\leq\|(\hat{u}-\hat{g}-k_j)_+\|^p_{L^p(\mathcal{Q}_{\rho_j})},
    \end{align*} we get
    \begin{equation}\label{lbest3}
        \begin{aligned}
            Y_{j+1} &= Y_{j+1}^{1-\frac{p\zeta}{p_2}}Y_{j+1}^{\frac{p\zeta}{p_2}}\\
            &\leq c2^{jp_2}d^{-p_1}Y_j^{\frac{sp}{Q+sp}}|\mathcal{Q}_{\rho_j}|^{-\frac{Q}{Q+sp}}[(\hat{u}-\hat{g}-k_{j+1})_+]^p_{V^{p,2}_s(\mathcal{Q}_{\rho_{j+1}})}\\
        &\qquad\qquad + c2^{jp_2}d^{-p_1}Y_j^{\frac{sp}{Q+sp}}|\mathcal{Q}_{\rho_j}|^{-\frac{Q}{Q+sp}}\frac{\rho_{j+1}^{p(1-s)}}{(\rho_j-\rho_{j+1})^p}\|(\hat{u}-\hat{g}-k_j)_+\|_{L^p(\mathcal{Q}_{\rho_j})}^p.
        \end{aligned}
    \end{equation}

    Combining \cref{lbest2}, \cref{lbest3} and the estimates for $J_1, J_2,$ and $J_3$, we get
    \begin{align}\label{lbest4}
        Y_{j+1} \leq c\frac{2^{2jp_2}}{(1-\sigma)^p}Y_j^{1+\frac{sp}{Q+sp}} + c\frac{2^{j(p_2+\mathcal{B}'(Q+sp))}}{(1-\sigma)^{\mathcal{B}'(Q+sp)}}Y_j^{\frac{sp}{Q+sp}}Z_j^{1+\kappa}.
    \end{align}

\paragraph{\textbf{Iterative Estimate for $Z_j$}} From \cref{eq6.2}, \cref{T4.10} with $q=2$, $r=\hat{q}$ and $m=p\hat{q}$, and $d^{p_1-\mathcal{B}}\leq 1$, we get
\begin{equation}\label{lbest4.5}
    \begin{aligned}
        Z_{j+1} & \leq (\rho_{j+1}^{Q+s})^{-\frac{1}{1+\kappa}} \left(\int_{-\rho_{j+1}^{sp}}^0\left(\int_{B_{\rho_{j+1}}}\left(\frac{(\hat{u}-\hat{g}-k_{j+1})_+}{k_{j+2}-k_{j+1}}\right)^{\hat{q}}\,d\xi\right)^{p}\,dt\right)^{\frac{\mathcal{B}}{p\hat{q}}}\\
        & = |\mathcal{Q}_{\rho_j}|^{-\frac{Q}{Q+sp}}(k_{j+2}-k_{j+1})^{-\mathcal{B}} \|(\hat{u}-\hat{g}-k_{j+1})_+\|^{\mathcal{B}}_{L^{\hat{q},p\hat{q}}(\mathcal{Q}_{\rho_{j+1}})}\\
        &\leq c\frac{2^{j\mathcal{B}}}{d^{p_1}}|\mathcal{Q}_{\rho_{j}}|^{-\frac{Q}{Q+sp}} \left([(\hat{u}-\hat{g}-k_{j+1})_+]_{V_s^{p,2}(\mathcal{Q}_{\rho_{j+1}})}+\rho_{j+1}^{-sp}\|(\hat{u}-\hat{g}-k_{j+1})_+\|_{L^p(\mathcal{Q}_{\rho_{j+1}})}^p\right)\\
        &\leq c\frac{2^{j\mathcal{B}}}{d^{\mathcal{B}}}|\mathcal{Q}_{\rho_{j}}|^{-\frac{Q}{Q+sp}} \left([(\hat{u}-\hat{g}-k_{j+1})_+]_{V_s^{p,2}(\mathcal{Q}_{\rho_{j+1}})}+\rho_{j+1}^{-sp}\|(\hat{u}-\hat{g}-k_{j+1})_+\|_{L^p(\mathcal{Q}_{\rho_{j+1}})}^p\right).
        \end{aligned}
\end{equation}
        Once again, arguing as for \cref{lbest4}, we obtain
        \begin{align}\label{lbest5}
            Z_{j+1} \leq c\frac{2^{j(\mathcal{B}+p_2)}}{(1-\sigma)^p}Y_j + c\frac{2^{j(\mathcal{B}+\mathcal{B}'(Q+sp))}}{(1-\sigma)^{\mathcal{B}'(Q+sp)}} Z_j^{1+\kappa}.
        \end{align}
        We further simplify \cref{lbest4} and \cref{lbest5} as
        \begin{align}\label{lbest6}
            Y_{j+1}&\leq \frac{c}{(1-\sigma)^L}2^{Lj}\left(Y_j^{1+\frac{sp}{Q+sp}}+ Z^{1+\kappa}Y_j^{\frac{sp}{Q+sp}}\right),\\
            Z_{j+1}&\leq \frac{c}{(1-\sigma)^L}2^{Lj}\left(Y_j+Z^{1+\kappa}\right)
        \end{align} for some constant $c=c(\verb|data|)$ and $L=\mathcal{B} + 2p_2 + \mathcal{B}'(Q+sp)$.

To use the iteration lemma (\cref{iterlemma}), we need estimates for $Y_0$ and $Z_0$.

\paragraph{\textbf{Estimate for $Y_0$}} From \cref{eq6.12}, we have
\begin{align}\label{lbest7}
    Y_0 \leq \frac{c}{d^{p_1}}\fiint_{\mathcal{Q}_1} (\hat{u}-\hat{g})_+^{p_2}\,d\xi\,dt.
\end{align}

\paragraph{\textbf{Estimate for $Z_0$}}
Now, like \cref{lbest4.5}, we have
\begin{equation}\label{lbest8}
    \begin{aligned}
        Z_0 & \leq c\left(\int_{-\rho_0^{sp}}^0\left(\int_{B_{\rho_0}}\left(\frac{\hat{u}-\hat{g}-\hat{k}}{d-\hat{k}}\right)^{\hat{q}}\,d\xi\right)^{p}\,dt\right)^{\frac{\mathcal{B}}{p\hat{q}}}\\
        &\leq c\left(\frac{1}{d-\hat{k}}\right)^{\mathcal{B}}\left([(\hat{u}-\hat{g}-\hat{k})_+]^p_{V^{p,2}_s(\mathcal{Q}_{\rho_0})}+\|(\hat{u}-\hat{g}-\hat{k})_+\|_{L^p(\mathcal{Q}_{\rho_0})}^p\right)
    \end{aligned}
\end{equation}

Similarly to the estimate for $J_1$, we have
\begin{align}
    \label{lbest9}
    \frac{1}{(1-\rho_0)^p}\iint_{\mathcal{Q}_1}(\hat{u}-\hat{g}-\hat{k})_+^p\,d\xi\,dt \leq \frac{c}{(1-\sigma)^p} \iint_{\mathcal{Q}_1}\frac{(\hat{u}-\hat{g})_+^{p_2}}{\hat{k}^{p_2-p}}\,d\xi\,dt \leq \frac{c}{(1-\sigma)^{p}}\fiint_{\mathcal{Q}_1} (\hat{u}-\hat{g})_+^{p_2}\,d\xi\,dt,
    \end{align} where we have used $\hat{k}\geq 1$ and $p_2\geq p$.

Similarly to the estimate for $J_2$, we have
\begin{align}
    \label{lbest10}
    \frac{1}{1-\rho_0^{sp}}\iint_{\mathcal{Q}_1}(\hat{u}-\hat{g}-\hat{k})_+^1\,d\xi\,dt \leq \frac{c}{(1-\sigma)^p} \fiint (\hat{u}-\hat{g})_+^{p_2}\,d\xi\,dt
\end{align}

Similarly to the estimate for $J_3$, we have by $Z_j\leq 1$ that
\begin{align}
    \label{lbest11}
    \hat{k}^{p_1}\left(\frac{\rho_0}{1-\rho_0}\right)^{\mathcal{B}'(Q+sp)}\left(\int_{-1}^0|\hat{\mathcal{A}}_+^g(t;0,1,\hat{k})|^p\,dt\right)^{\frac1p} \leq \hat{k}^{p_1}\left(\frac{1}{1-\sigma}\right)^{\mathcal{B}'(Q+sp)}.
\end{align}

Rewriting \cref{lbest2} with $k_{j+1}=\hat{k}$, $\rho_{j+1}=\rho_0$ and $\rho_j=1$, we receive
\begin{equation}\label{lbest12}
    \begin{aligned}
        \relax[(\hat{u}-\hat{g}-\hat{k})_+]^p_{V^{p,2}_s(\mathcal{Q}_{\rho_0})} &\leq c \frac{\rho_0^{p(1-s)}}{(1-\rho_0)^p}\iint_{\mathcal{Q}_1} (\hat{u}-\hat{g}-\hat{k})_+^p\,d\xi\,dt + \frac{c}{1-\rho_0^{sp}}\iint_{\mathcal{Q}_1} (\hat{u}-\hat{g}-\hat{k})_+^2\,d\xi\,dt\\
        &\qquad + c\hat{k}^{p_1}\left(\frac{\rho_0}{1-\rho_0}\right)^{\mathcal{B}'(Q+sp)}\left(\int_{-1}^0|\hat{\mathcal{A}}_+^g(t;0,1,\hat{k})|^p\,dt\right)^{\frac1p}.
    \end{aligned}
\end{equation}

Combining the estimates in \cref{lbest8}, \cref{lbest9}, \cref{lbest10}, \cref{lbest11}, and \cref{lbest12}, we obtain the bound
\begin{align}
    Z_0 \leq \frac{c}{(1-\sigma)^L} \frac{1}{(d-\hat{k})^{\mathcal{B}}} \left(\fiint_{\mathcal{Q}_1}(\hat{u}-\hat{g})_+^{p_2}\,d\xi\,dt + \hat{k}^{p_1}\right) \leq \frac{c}{(1-\sigma)^L} \frac{1}{(d-\hat{k})^{p_1}} \left(\fiint_{\mathcal{Q}_1}(\hat{u}-\hat{g})_+^{p_2}\,d\xi\,dt + \hat{k}^{p_1}\right)
\end{align} where we use $(d-\hat{k})^{p_1-\mathcal{B}}\leq 1$ from \cref{eq6.4}.

Now, taking $d$ sufficiently large such that
\begin{align}
    \label{lbest13}
    d = \left(\frac{4c}{(1-\sigma)^{2L}}\right)^{\frac{1+\kappa}{p_1\Upsilon}} 2^{L\frac{1+\kappa}{p_1\Upsilon^2}+1} \left[\hat{k} + \left(\fiint_{\mathcal{Q}_1}(\hat{u}-\hat{g})_+^{p_2}\,d\xi\,dt\right)^{\frac{1}{p_1}}\right]
\end{align} where 
\begin{align}
    \label{lbest14}
    \Upsilon := \min\{\kappa, \frac{sp}{Q+sp}\},
\end{align} we get
\begin{align}
    Y_0+Z_0^{1+\kappa} \leq \left( \frac{2c}{(1-\sigma)^L} \right)^{-\frac{1+\kappa}{\Upsilon}} 2^{-L\frac{1+\kappa}{\Upsilon^2}}.
\end{align}

Applying \cref{T4.10} and scaling back, we get, for $c_1=2L(\frac{1+\kappa}{p_1\Upsilon}+1)$
\begin{equation}
\begin{aligned}
    \sup_{\mathcal{Q}_{\sigma\rho}(z_0)} (u(\xi,t)-g(t_0))_+ &\leq \frac{c}{(1-\sigma)^{c_1}}\Bigg[\left(\fiint_{\mathcal{Q}_{\rho}(z_0)} (u-g)_+^{p_2}\,d\xi\,dt\right)^{\frac{1}{p_1}} \\
    &\qquad+ \Big[ \fint_{t_0-\rho^{sp}}^{t_0} \left( \fint_{B_\rho(\xi_0)} u_+^{p-1}(\eta,t) \,d\eta\right)^{\frac{p}{p-1}}\,dt \Big]^\frac{\mathcal{B}'(p-1)}{pp_1} + 1 \Bigg].
\end{aligned}
\end{equation}

This implies that
\begin{equation}
\begin{aligned}
    \sup_{\mathcal{Q}_{\sigma\rho}(z_0)} u_+(\xi,t) &\leq \frac{c}{(1-\sigma)^{c_1}}\Bigg[\left(\fiint_{\mathcal{Q}_{\rho}(z_0)} u_+^{p_2}\,d\xi\,dt\right)^{\frac{1}{p_1}} \\
    &\qquad+ \Big[ \fint_{t_0-\rho^{sp}}^{t_0} \left( \fint_{B_\rho(\xi_0)} u_+^{p-1}(\eta,t) \,d\eta\right)^{\frac{p}{p-1}}\,dt \Big]^\frac{\mathcal{B}'(p-1)}{pp_1} + 1 \Bigg]\\
    &\qquad\qquad+ \mathfrak{C}\rho^{-sp}\int_{t_0-\rho^{sp}}^{t_0}\textup{Tail}^{p-1}(u_+(t);\rho,\xi_0)\,dt.
\end{aligned}
\end{equation}

 \end{proof}

\appendix
\section{Some Interpolation Inequalities}\label{sec:interineq} In this appendix, we prove a few interpolation inequalities. One of them is in the critical case $sp=Q$ and the other one is for the supercritical case $sp>Q$. Of great interest is the BMO interpolation inequality (\cref{bmointerpol}) which is new in this context. 

We will make use of the following embedding.
\begin{theorem}(\cite[Theorem 1.5]{AdiMali2018})\label{morreyheisen}
        Let $s\in (0,1)$ and $sp>Q$. Then there exists a constant $C=C(Q,p,s)$ such that
        \begin{align}
            [u]_{C^{0,\beta}(\hn)}\leq C[u]_{W^{s,p}(\hn)} 
        \end{align} for all $u\in L^{p}(\hn)$ and $\beta=s-\frac{Q}{p}$.
    \end{theorem}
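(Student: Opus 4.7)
My approach will be to adapt the classical fractional Morrey embedding argument to the Heisenberg setting. Because $(\hn, |\cdot^{-1} \circ \cdot|, \mathcal{L}^{2N+1})$ is a $Q$-regular metric space with a doubling Haar measure, a Campanato-type scheme built on the Sobolev--Poincar\'e inequality (\cref{sobolpoinc}) will transfer directly. The first building block will be the pointwise estimate
\[
    \left( \fint_{B(\zeta, r)} |u - u_{B(\zeta, r)}|^p\, d\xi \right)^{1/p} \le C\, r^{\beta}\, [u]_{W^{s,p}(\hn)}, \qquad \beta = s - Q/p,
\]
which follows from \cref{sobolpoinc}, the normalization $|B(\zeta, r)| \asymp r^Q$, and the monotonicity of the Gagliardo seminorm under enlarging the domain.

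Next, I will fix a Lebesgue point $\zeta$ and telescope over the shrinking dyadic balls $B_k = B(\zeta, 2^{-k} r)$. Combining the doubling property with H\"older's inequality and the Campanato estimate gives
\[
    |u_{B_k} - u_{B_{k+1}}| \le \frac{|B_k|}{|B_{k+1}|} \fint_{B_k} |u - u_{B_k}|\, d\xi \le C\, (2^{-k} r)^{\beta}\, [u]_{W^{s,p}(\hn)},
\]
and since $\beta > 0$, summing the geometric series together with $u_{B_k} \to u(\zeta)$ (Lebesgue differentiation on doubling metric measure spaces) yields $|u(\zeta) - u_{B(\zeta, r)}| \le C r^\beta [u]_{W^{s,p}(\hn)}$. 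For two Lebesgue points $\xi, \eta$ with $r := |\eta^{-1} \circ \xi|$, I will take $B^* := B(\xi, 4r) \supset B(\xi, 2r) \cup B(\eta, 2r)$, where all three balls have comparable measure, and decompose
\[
    |u(\xi) - u(\eta)| \le |u(\xi) - u_{B(\xi, 2r)}| + |u_{B(\xi, 2r)} - u_{B^*}| + |u_{B^*} - u_{B(\eta, 2r)}| + |u_{B(\eta, 2r)} - u(\eta)|.
\]
The outer terms are controlled by the previous pointwise bound, while the middle two are each dominated by $C \fint_{B^*} |u - u_{B^*}|\, d\xi \le C r^\beta [u]_{W^{s,p}(\hn)}$ via the Campanato estimate. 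Taking the supremum over $\xi \neq \eta$ (and redefining $u$ on a null set to handle non-Lebesgue points) will yield the desired bound on $[u]_{C^{0,\beta}(\hn)}$.

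The main obstacle will be purely bookkeeping of the Heisenberg geometry. Because the standard homogeneous norm is an honest metric (the pseudo-triangle constant is $c = 1$) and the measure scales exactly as $|B(\zeta, \lambda r)| = \lambda^Q |B(\zeta, r)|$, the comparability $|B(\xi, 2r)| \asymp |B(\eta, 2r)| \asymp |B^*|$ used in the two-point step is automatic; there is no conceptual difficulty beyond the Euclidean argument. The one ingredient I would need to cite (rather than reprove) is the Lebesgue differentiation theorem for doubling metric measure spaces, applied to $\mathcal{L}^{2N+1}$-a.e.\ point of $\hn$.
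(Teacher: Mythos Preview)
The paper does not supply its own proof of this theorem: it is quoted verbatim from \cite[Theorem~1.5]{AdiMali2018} and used as a black box in Case~3 of \cref{sec:gagint}. Your Campanato-type argument is the standard route and is correct as written; all the ingredients you invoke (the Sobolev--Poincar\'e inequality \cref{sobolpoinc}, the exact $Q$-homogeneity $|B(\zeta,r)|\asymp r^Q$, the genuine triangle inequality for the standard homogeneous norm, and Lebesgue differentiation on doubling spaces as in \cite{Shanmugalingam2015}) are already present in the paper, so the argument goes through without additional input.
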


We will first prove the following interpolation result whose proof is modified from \cite[Prop.12.84]{Leoni17}.
    \begin{theorem}\label{interlope1}
        Let $u:\hn\to\mathbb{R}$ be H\"older continuous with exponent $0<\alpha<1$ and such that $u\in L^q(\hn)$ for some $q\in [1,\infty]$. Then $u$ belongs to $L^r(\hn)$ for all $r\in [q,\infty]$ with
        \begin{align*}
            \|u\|_{L^r(\hn)}\lesssim \|u\|_{L^q(\hn)}^{\theta_1} [u]_{C^{0,\alpha}(\hn)}^{1-\theta_1},
        \end{align*} where $\theta_1=\frac{\alpha+Q/r}{\alpha+Q/q}$.
    \end{theorem}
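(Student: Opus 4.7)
The plan is to reduce to the $L^\infty$ endpoint via a standard concentration argument and then interpolate against the given $L^q$ norm. Throughout I will use that the Haar measure on $\hn$ is $Q$-homogeneous, so $|B(\xi,\rho)| = c\rho^Q$, and that $d_0(\xi,\eta) = |\eta^{-1}\circ\xi|$ is a genuine metric (with constant $c=1$ for the standard homogeneous norm as noted after the pseudo-triangle lemma).

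First I would establish the endpoint case $r=\infty$. Assume $[u]_{C^{0,\alpha}(\hn)}>0$ (otherwise $u$ is constant and $u\in L^q(\hn)$ forces $u\equiv 0$, making the inequality trivial). Fix $\xi_0\in\hn$ and set
\[
    \rho := \left(\frac{|u(\xi_0)|}{2\,[u]_{C^{0,\alpha}(\hn)}}\right)^{1/\alpha}.
\]
For any $\eta\in B(\xi_0,\rho)$, Hölder continuity gives $|u(\eta)-u(\xi_0)|\leq [u]_{C^{0,\alpha}}\rho^\alpha = |u(\xi_0)|/2$, so $|u(\eta)|\geq |u(\xi_0)|/2$ on $B(\xi_0,\rho)$. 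Integrating and using $|B(\xi_0,\rho)|=c\rho^Q$,
\[
    \|u\|_{L^q(\hn)}^q \;\geq\; \int_{B(\xi_0,\rho)} |u|^q \;\geq\; c\,\Big(\tfrac{|u(\xi_0)|}{2}\Big)^q \rho^Q
    \;=\; c'\,|u(\xi_0)|^{q+Q/\alpha}\,[u]_{C^{0,\alpha}}^{-Q/\alpha}.
\]
Solving for $|u(\xi_0)|$ and taking the supremum over $\xi_0$ yields the endpoint estimate
\[
    \|u\|_{L^\infty(\hn)} \;\leq\; C\,\|u\|_{L^q(\hn)}^{\alpha q/(\alpha q + Q)}\, [u]_{C^{0,\alpha}(\hn)}^{Q/(\alpha q+Q)},
\]
which is exactly the statement for $r=\infty$.

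For $r\in[q,\infty)$, I would interpolate trivially: $|u|^r = |u|^{r-q}|u|^q$ gives
\[
    \|u\|_{L^r(\hn)} \;\leq\; \|u\|_{L^\infty(\hn)}^{(r-q)/r}\, \|u\|_{L^q(\hn)}^{q/r},
\]
and substituting the endpoint bound produces exponents $q/r + \frac{\alpha q}{\alpha q+Q}\cdot\frac{r-q}{r}$ on $\|u\|_{L^q}$ and $\frac{Q}{\alpha q+Q}\cdot\frac{r-q}{r}$ on $[u]_{C^{0,\alpha}}$. A short algebraic manipulation shows these equal $\theta_1 = \frac{\alpha+Q/r}{\alpha+Q/q}$ and $1-\theta_1 = \frac{Q(r-q)}{r(\alpha q+Q)}$ respectively, completing the proof.

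The argument has essentially no obstacles in the Heisenberg setting: the only ingredients used are the $Q$-homogeneity of the Haar measure on balls (which holds by the dilation structure $\Phi_\lambda$), the triangle inequality for $|\cdot^{-1}\circ\cdot|$ (available with constant $1$ for the standard homogeneous norm), and elementary interpolation between $L^q$ and $L^\infty$. The only point that would deserve a line of care is handling the supremum (one should either work with $\esssup$ since $u$ is assumed H\"older continuous so the pointwise and essential suprema coincide, or note that $u\in L^q$ together with H\"older continuity forces $u(\xi)\to 0$ as $|\xi|\to\infty$ so the supremum is attained or approached by a sequence $\xi_n$ on which the argument applies uniformly).
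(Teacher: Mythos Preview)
Your proof is correct and follows the same two-step outline as the paper: first establish the endpoint $r=\infty$ bound, then interpolate via $\|u\|_{L^r}\leq \|u\|_{L^\infty}^{1-q/r}\|u\|_{L^q}^{q/r}$. The interpolation step and the exponent bookkeeping are identical.

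The only difference is in how the $L^\infty$ bound is obtained. The paper bounds $|u(\xi)|$ from above by $|u(\eta)|+R^\alpha[u]_{C^{0,\alpha}}$, averages over $\eta\in B(\xi,R)$, applies H\"older to get $|u(\xi)|\leq (\omega_{\hn}R^Q)^{-1/q}\|u\|_{L^q}+R^\alpha[u]_{C^{0,\alpha}}$, and then minimizes over $R>0$. You instead run a concentration argument: pick the radius $\rho$ so that $|u|\geq|u(\xi_0)|/2$ on $B(\xi_0,\rho)$, and bound $\|u\|_{L^q}^q$ from below by the integral over that ball. These are dual versions of the same idea and yield the same exponent $\theta=\alpha q/(\alpha q+Q)$. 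Your version avoids the explicit optimization step; the paper's version avoids needing to check that $|u(\xi_0)|>0$ at the point under consideration. Neither has any advantage in this setting.
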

    \begin{proof}
        If $q=r$ or if $q=r=\infty$, the proposed inequality is true trivially. Therefore, let us assume that $q<r<\infty$. We begin by proving that $u$ is bounded. If $[u]_{C^{0,\alpha}}=0$, then $u$ is a constant and, in fact, the zero function since $\|u\|_{L^q(\hn)}<\infty$. Therefore, we further assume that $[u]_{C^{0,\alpha}}>0$. Given $R>0$, $\xi\in\hn$ and $\eta\in B(\xi,R)\setminus\{\xi\}$, we get
        \begin{align*}
            |u(\xi)|\leq u(\eta) + |u(\xi)-u(\eta)|\leq |u(\eta)| + R^\alpha [u]_{C^{0,\alpha}}.
        \end{align*} Averaging both sides over $\eta$ in $B(\xi,R)$ and using H\"older's inequality for $q>1$, we get
        \begin{align*}
            |u(\xi)|&\leq \frac{1}{\omega_{\hn}R^Q}\int_{B(\xi,R)}u(\eta)\,d\eta + R^\alpha [u]_{C^{0,\alpha}}\\
            & \leq (\omega_{\hn}R^Q)^{-\frac1q}\|u\|_{L^q} + R^\alpha [u]_{C^{0,\alpha}}.
        \end{align*}
        Minimizing the function $g(t)=\omega_{\hn}^{-\frac1q}t^{-\frac{Q}{q}}\|u\|_{L^q} + t^\alpha [u]_{C^{0,\alpha}}$ over all $t>0$ gives us the identity
        \begin{align*}
            \|u\|_{L^\infty(\hn)} \lesssim \|u\|_{L^q(\hn)}^{\theta} [u]_{C^{0,\alpha}}^{1-\theta},
        \end{align*} where $\theta=\frac{\alpha}{\alpha+Q/q}$.
        we substitute this in the simple inequality
        \begin{align*}
            \|u\|_{L^r(\hn)}\leq \|u\|_{L^\infty(\hn)}^{1-\frac{q}r}\|u\|_{L^q(\hn)}^{\frac{q}r}
        \end{align*} to get the proposed inequality.
    \end{proof}

    We say that a locally integrable function $f:\hn\to\mathbb{R}$ is in $\text{BMO}(\hn)$ if there exists a constant $c$ such that 
    \begin{align}
        \fint_B |f(\xi)-f_B|\,d\xi\leq c
    \end{align} for all balls $B\subset \hn$. Here and elsewhere we use the notation $f_B: = \fint_B f(\xi)\,d\xi$. The space of all such functions is equipped with the seminorm
    \begin{align}
        [f]_{\text{BMO}}=\sup_{B\subset\hn}\fint_B|f(\xi)-f_B|\,d\xi.
    \end{align}

    We shall require the following embedding whose proof is modified from that in \cite[Theorem 7.11]{Leoni17}.
    \begin{theorem}\label{bmoembedsobol}
        Let $p\in (Q,\infty)$. Then $|u|_{\text{BMO}(\hn)}\lesssim [u]_{W^{Q/p,p}(\hn)}$ for $u\in W^{Q/p,p}(\hn)$.
    \end{theorem}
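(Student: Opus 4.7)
The plan is to deduce the BMO bound directly from the fractional Sobolev--Poincar\'e inequality (\cref{sobolpoinc}), exploiting the critical scaling $sp = Q$ that renders the resulting estimate independent of the radius. Fix a ball $B \subset \hn$ of radius $r$; the BMO seminorm is the supremum over such $B$ of the quantity $\fint_B |u(\xi) - u_B|\,d\xi$, so it suffices to bound this mean oscillation by $[u]_{W^{Q/p,p}(\hn)}$ uniformly in $B$.

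First I would apply H\"older's inequality to raise the exponent:
\begin{equation*}
\fint_B |u(\xi) - u_B|\,d\xi \leq \left(\fint_B |u(\xi) - u_B|^p\,d\xi\right)^{\tfrac{1}{p}} = |B|^{-\tfrac{1}{p}}\left(\int_B |u(\xi) - u_B|^p\,d\xi\right)^{\tfrac{1}{p}}.
\end{equation*}
Setting $s = Q/p \in (0,1)$ (valid since $p > Q$), \cref{sobolpoinc} with $\textup{diam}(B) \sim r$ yields
\begin{equation*}
\int_B |u(\xi) - u_B|^p\,d\xi \leq c\, r^{sp} \int_B\int_B \gaglifrac\,d\xi\,d\eta = c\, r^{Q}\,[u]_{W^{s,p}(B)}^p.
\end{equation*}
Combining the two displays and using $|B| \simeq r^Q$, the factor $|B|^{-1/p} \cdot r^{Q/p}$ reduces to a pure constant, giving
\begin{equation*}
\fint_B |u(\xi) - u_B|\,d\xi \leq c\,[u]_{W^{s,p}(B)} \leq c\,[u]_{W^{Q/p,p}(\hn)}.
\end{equation*}

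Taking the supremum over balls $B \subset \hn$ finishes the proof. The argument is essentially automatic once one notices the critical cancellation; I do not anticipate a genuine obstacle. The only points demanding mild care are (i) verifying that $s = Q/p$ lies in $(0,1)$ so that \cref{sobolpoinc} is applicable, which follows from $p > Q \geq 4$, and (ii) confirming that the constants produced by \cref{sobolpoinc} and by $|B(\xi_0, r)| = c_Q r^Q$ (a consequence of the homogeneity of the Haar measure under the dilations $\Phi_\lambda$) are independent of the center and radius of $B$, which is immediate from the translation invariance of the Haar measure on $\hn$.
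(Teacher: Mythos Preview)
Your argument is correct and is essentially identical to the paper's own proof: both apply the Sobolev--Poincar\'e inequality \cref{sobolpoinc} with $sp=Q$ to a generic ball, observe that the factor $r^{sp}/|B|$ becomes a dimensional constant, and then take the supremum over balls. The only cosmetic difference is that you spell out the H\"older step from the $L^1$ to the $L^p$ mean oscillation, which the paper leaves implicit.
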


    \begin{proof}
        By Sobolev-Poincar\'e inequality, we have for any ball $B\subset\hn$ that
        \begin{align*}
            \int_B|u(\xi)-u_B|^p\,d\xi \leq |\text{diam}(B)|^{Q}\intgaglifrac{B},
        \end{align*} where we used $sp=Q$. This is equivalent to 
        \begin{align*}
            \fint_B|u(\xi)-u_B|^p\,d\xi \leq \intgaglifrac{B}. 
        \end{align*}Since $B$ is arbitrary, we may take supremum over $B$ on the left hand side and conclude.
    \end{proof}

    The following theorem, also known as the John-Nirenberg lemma, is taken from \cite{Aaltoetal11} where it is stated for doubling metric measure spaces.

    \begin{theorem}\label{JNL}(\cite[Theorem 5.2]{Aaltoetal11})
        Let $f\in\text{BMO}(\hn)$. Then
        \begin{align*}
            |\{\xi\in B:|f-f_B|>\lambda\}| \leq c_1|B|e^{-c_2\lambda/[f]_{\text{BMO}}}
        \end{align*} for all balls $B\subset \hn$ and $\lambda>0$ with the constants $c_1$ and $c_2$ independent of $f$ and $\lambda$.
    \end{theorem}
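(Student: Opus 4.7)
The plan is to adapt the classical Calderón--Zygmund proof of the John--Nirenberg inequality to the doubling metric measure space $(\hn, |\cdot^{-1}\circ\cdot|, \mathcal{L}^{2N+1})$. The only ingredients needed beyond standard measure theory are a Vitali-type covering lemma and the Lebesgue differentiation theorem, both of which are available here since $\hn$ is a $Q$-regular metric measure space with doubling measure.

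First, I would set up the right stopping-time argument. Fix a ball $B\subset \hn$ and pick a threshold $\lambda_0:=\bar c\,[f]_{\text{BMO}}$, where $\bar c>1$ is a large constant depending only on the doubling constant, to be chosen at the end. For each $\xi\in B$ with $|f(\xi)-f_B|>\lambda_0$ (a Lebesgue point), by the Lebesgue differentiation theorem together with the bound $\fint_B|f-f_B|\le [f]_{\text{BMO}}<\lambda_0$, there exists a largest radius $r(\xi)$ such that the average of $|f-f_B|$ over $B(\xi,r(\xi))$ exceeds $\lambda_0$. This yields a family of balls; applying a Vitali covering lemma produces a pairwise disjoint countable subfamily $\{B_i\}$ whose $5$-dilates still cover the super-level set modulo a null set. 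By the stopping condition and doubling,
\[
\lambda_0<\fint_{B_i}|f-f_B|\,d\xi\le c_d\,\lambda_0,
\]
where $c_d$ depends only on the doubling constant.

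Next, I would derive two consequences of this decomposition. Summing the lower estimate gives
\[
\sum_i|B_i|\;\le\;\frac{1}{\lambda_0}\int_B|f-f_B|\,d\xi\;\le\;\frac{[f]_{\text{BMO}}}{\lambda_0}\,|B|,
\]
while the upper estimate together with $|f_{B_i}-f_B|\le \fint_{B_i}|f-f_B|\,d\xi\le c_d\lambda_0$ shows that on each $B_i$ the function $f-f_B$ differs from $f-f_{B_i}$ by at most $c_d\lambda_0$. Define the normalized distribution function
\[
F(\lambda):=\sup_{B\subset\hn}\frac{|\{\xi\in B:|f(\xi)-f_B|>\lambda\}|}{|B|}.
\]
For $\lambda>c_d\lambda_0$, any $\xi$ contributing to the super-level set must lie in some $B_i$ and satisfy $|f(\xi)-f_{B_i}|>\lambda-c_d\lambda_0$. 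Hence
\[
|\{\xi\in B:|f-f_B|>\lambda\}|\;\le\;\sum_i F(\lambda-c_d\lambda_0)|B_i|\;\le\;\frac{[f]_{\text{BMO}}}{\lambda_0}\,F(\lambda-c_d\lambda_0)\,|B|.
\]
Taking the supremum over $B$ gives the recursive inequality $F(\lambda)\le \bar c^{-1} F(\lambda-c_d\lambda_0)$.

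Finally, iterating this recursion $n$ times starting from the trivial bound $F(\lambda)\le1$ and choosing $n=\lfloor \lambda/(c_d\lambda_0)\rfloor$ yields geometric decay $F(\lambda)\le \bar c^{-n}$, which translates into the claimed exponential bound with $c_1,c_2$ depending only on the doubling constant (through $c_d$ and $\bar c$). The main technical obstacle is the Vitali covering step: in general metric spaces one must be careful that the $5$-dilates of a disjoint subfamily cover the original family, but this is exactly what a Vitali-type lemma for doubling measures delivers, so the argument goes through on $\hn$ without modification. The rest is a clean iteration, and no dyadic-cube structure is strictly needed.
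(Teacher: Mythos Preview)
The paper does not prove this statement; it simply quotes it from \cite[Theorem~5.2]{Aaltoetal11} as a known result for doubling metric measure spaces, of which $\hn$ is an instance. So there is no ``paper's own proof'' to compare against.

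Your sketch is the standard Calder\'on--Zygmund stopping-time argument and is essentially correct, but one step needs more care. You claim that for each Lebesgue point $\xi$ with $|f(\xi)-f_B|>\lambda_0$ there is a \emph{largest} radius $r(\xi)$ with $\fint_{B(\xi,r(\xi))}|f-f_B|>\lambda_0$, invoking the bound $\fint_B|f-f_B|\le[f]_{\text{BMO}}<\lambda_0$. But $B(\xi,r)$ for large $r$ is not $B$, so that bound does not directly show the average drops below $\lambda_0$ at some finite radius. You need to restrict a priori to radii $r\le c\,\mathrm{rad}(B)$ and then argue, via doubling and a short chain estimate comparing $f_B$ to $f_{B(\xi,r)}$, that at the maximal allowed radius the average is at most $C_d[f]_{\text{BMO}}$; this is exactly why $\bar c$ must be chosen large depending on the doubling constant. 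Relatedly, the sum $\sum_i|B_i|\le\lambda_0^{-1}\int_B|f-f_B|$ requires the disjoint $B_i$ to sit inside a fixed dilate of $B$, and then doubling converts $\int_{cB}$ back to a multiple of $|B|\,[f]_{\text{BMO}}$. Once these containment issues are handled (they are routine in the doubling setting), the recursion $F(\lambda)\le\bar c^{-1}F(\lambda-c_d\lambda_0)$ and its iteration go through exactly as you describe.
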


    \subsection{A BMO interpolation inequality}

    In this section, we prove the following interpolation inequality for $\text{BMO}(\hn)$ spaces whose proof we have adapted from \cite{AzBed2015}. This inequality may be of independent interest.

    \begin{theorem}\label{bmointerpol}
        Let $q\in (1,\infty)$. Let $f\in L^q(\hn)\cap \text{BMO}(\hn)$. Then for $r\in (q,\infty)$ it holds that
        \begin{align}
            \|f\|_{L^r(\hn)}\lesssim_{N,q,r} \|f\|_{L^q(\hn)}^{\frac{q}{r}}[f]_{\text{BMO}(\hn)}^{1-\frac{q}{r}}.
        \end{align}
    \end{theorem}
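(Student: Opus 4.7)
The plan is to follow the classical layer cake strategy, combining Chebyshev's inequality in $L^q$ with the exponential distribution estimate furnished by the John--Nirenberg lemma (\cref{JNL}). Write $\beta:=[f]_{\text{BMO}(\hn)}$ and $E_\lambda:=\{\xi\in\hn:|f(\xi)|>\lambda\}$. The target is the sharpened level-set bound
\[
|E_\lambda|\le C_q\,\lambda^{-q}\,\|f\|_{L^q(\hn)}^q\,e^{-c\lambda/\beta}\qquad\text{for all }\lambda>0,
\]
after which the theorem drops out of a one-line layer cake computation.

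The first step is a Calder\'on--Zygmund decomposition of $|f|^q$ at level $\mu^q$. Since $(\hn,|\cdot^{-1}\circ\cdot|,\mathcal{L}^{2N+1})$ is a doubling metric measure space, Christ's dyadic-cube construction supplies a system of ``cubes'' on $\hn$ to which the standard stopping-time argument applies. This yields pairwise disjoint cubes $\{Q_i\}$ with $\mu^q<\fint_{Q_i}|f|^q\,d\xi\le c\mu^q$, $|f|\le\mu$ almost everywhere outside $\bigcup_i Q_i$, and hence $|f_{Q_i}|\le c^{1/q}\mu$ and $\sum_i|Q_i|\le \mu^{-q}\|f\|_{L^q}^q$. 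For $\lambda>2c^{1/q}\mu$, every $\xi\in Q_i$ with $|f(\xi)|>\lambda$ must satisfy $|f(\xi)-f_{Q_i}|>\lambda/2$, so \cref{JNL} applied on each $Q_i$ gives $|\{\xi\in Q_i:|f|>\lambda\}|\le c_1|Q_i|e^{-c_2\lambda/(2\beta)}$. Summing over $i$ and then taking $\mu=\lambda/(2c^{1/q})$ (the largest admissible value) absorbs the $\mu^{-q}$ factor into $\lambda^{-q}$ and delivers the displayed sharpened estimate.

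Given the refined distributional bound, I would split the layer-cake integral at $\lambda_0=\beta$:
\[
\|f\|_{L^r(\hn)}^r = r\int_0^\beta\lambda^{r-1}|E_\lambda|\,d\lambda + r\int_\beta^\infty\lambda^{r-1}|E_\lambda|\,d\lambda.
\]
On $(0,\beta)$ the trivial Chebyshev bound $|E_\lambda|\le\lambda^{-q}\|f\|_{L^q}^q$ integrates, using $r>q$, to a constant times $\beta^{r-q}\|f\|_{L^q}^q$, while on $(\beta,\infty)$ the sharpened bound, after the substitution $t=\lambda/\beta$, integrates to a $(q,r)$-dependent constant times $\beta^{r-q}\|f\|_{L^q}^q$. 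Summing the two contributions and taking $r$-th roots gives $\|f\|_{L^r}\lesssim_{N,q,r}\|f\|_{L^q}^{q/r}\beta^{1-q/r}$, as required.

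The principal technical obstacle is setting up the Calder\'on--Zygmund stopping time directly on $\hn$; this needs a dyadic system of Christ-type cubes adapted to the homogeneous norm $|\cdot^{-1}\circ\cdot|$, which is available on any doubling metric measure space and so is within reach from the doubling property recorded earlier for $\hn$. Once the dyadic infrastructure is in place, every other ingredient (Chebyshev, \cref{JNL}, layer cake) is quoted directly, and the argument becomes a faithful transcription of the Euclidean proof in \cite{AzBed2015}.
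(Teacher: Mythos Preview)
Your argument is correct, but it follows a genuinely different route from the paper's own proof. Both proofs rest on the same two pillars---a Calder\'on--Zygmund decomposition and the John--Nirenberg lemma---but they combine them differently. The paper performs a single CZ decomposition at level $\alpha$, splits $f=f_1+f_2$ with $f_1$ bounded and $f_2$ supported on the CZ balls, uses \cref{JNL} only through its corollary $\fint_{B_i}|f-f_{B_i}|^r\lesssim[f]_{\mathrm{BMO}}^r$, and arrives first at the \emph{additive} inequality $\|f\|_{L^r}\lesssim[f]_{\mathrm{BMO}}+\|f\|_{L^q}$; it then upgrades this to the multiplicative form by applying it to the Heisenberg dilations $f_\delta(\xi)=f(\Phi_\delta(\xi))$ and optimising over $\delta$. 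Your approach instead runs a CZ stopping-time at every level $\mu=\mu(\lambda)$, feeds the full exponential decay from \cref{JNL} into a refined distributional bound $|\{|f|>\lambda\}|\lesssim\lambda^{-q}\|f\|_{L^q}^q e^{-c\lambda/[f]_{\mathrm{BMO}}}$, and obtains the multiplicative inequality directly via layer cake, with no scaling step. Your route is arguably more robust (it does not use the dilation structure of $\hn$ and would transfer verbatim to any doubling metric measure space), while the paper's route is slightly more elementary in that it needs only the $L^r$ consequence of John--Nirenberg rather than the pointwise exponential estimate. One small point to keep in mind: \cref{JNL} is stated in the paper for metric balls, whereas you apply it on Christ dyadic cubes; this is standard (each Christ cube is comparable to a ball of the same scale), but you should note the passage explicitly.
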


    The proof of \cref{bmointerpol} requires Calder\'on-Zygmund decomposition lemma in addition to the John-Nirenberg lemma. We state the Calder\'on-Zygmund decomposition theorem for $L^p$ spaces below. Its proof is the $L^q$ modification of \cite{Cw71} as in \cite{Feff1969}. We omit the proof since it is well-known.

    \begin{lemma}\label{CZD}(Calder\'on-Zygmund decomposition) Let $f\in L^q(\hn)$ and let $\lambda>0$ be given. Then there exists a countable family $\{B_i\}_i$ of disjoint balls such that
    \begin{enumerate}
        \item[(i)] $\{\frac14 B_i\}$ are pairwise disjoint.
        \item[(ii)] The $B_i$'s have small total volume, i.e., 
        \begin{align*}
            \sum_i|B_i| \leq \frac{C}{\lambda^q}\|f\|^q_{L^q(\hn)}
        \end{align*}
        \item[(iii)] $|f(\xi)|\leq C \lambda$ a.e. $\xi\in \hn \setminus \bigcup_i B_i$,
        \item[(iv)] For any $i$, $\lambda^q\lesssim\fint_{B_i} |f(\xi)|^q\,d\xi\lesssim_{Q}\lambda^q$.
    \end{enumerate} The balls satisfying the above conditions are called Calder\'on-Zygmund balls at level $\lambda$. Moreover, if $\lambda_0\leq \lambda_1\leq\cdots\leq \lambda_K$, then the Calder\'on-Zygmund balls corresponding to different levels $\lambda_n$ may be chosen in such a way that each $\frac14 B_i(\lambda_{n+1})$ is contained in some $B_j(\lambda_n)$.        
    \end{lemma}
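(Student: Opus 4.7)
The plan is to run a stopping-time argument based on the $L^q$ maximal function on $\hn$, using the doubling property of Haar measure. Define the $L^q$-Hardy-Littlewood maximal function
\begin{equation*}
   M_q f(\xi) := \sup_{B \ni \xi} \left(\fint_B |f|^q \, d\eta\right)^{1/q},
\end{equation*}
where the supremum is over all balls containing $\xi$. Applying the weak $(1,1)$ inequality for the usual maximal function to $|f|^q$ (which holds on $(\hn,|\cdot^{-1}\circ\cdot|,\mathcal{L}^{2N+1})$ by the doubling property stated in the introduction), one obtains $|\{M_q f > C\lambda\}| \lesssim \lambda^{-q}\|f\|_{L^q(\hn)}^q$, with an implicit constant depending only on the doubling constant. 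Set $\Omega_\lambda := \{M_q f > C\lambda\}$; this is an open set of finite measure, and Lebesgue differentiation gives $|f(\xi)| \leq C\lambda$ for a.e. $\xi \notin \Omega_\lambda$, which will yield (iii).

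Next I would build a candidate family of balls by a stopping-time selection. For each $\xi \in \Omega_\lambda$, let
\begin{equation*}
   r(\xi) := \sup\bigl\{ r > 0 \,:\, \fint_{B(\xi,r)} |f|^q \, d\eta > \lambda^q\bigr\}.
\end{equation*}
Because $f \in L^q(\hn)$, the average over any sufficiently large ball is at most $\lambda^q$, so $r(\xi) < \infty$. Writing $B_\xi := B(\xi, r(\xi))$, continuity from below gives $\fint_{B_\xi}|f|^q \geq \lambda^q$, while the ball $B(\xi, 2r(\xi))$ already satisfies $\fint|f|^q \leq \lambda^q$; the doubling property then yields the two-sided bound
\begin{equation*}
   \lambda^q \leq \fint_{B_\xi}|f|^q \, d\eta \leq c_d \, \lambda^q,
\end{equation*}
which is exactly (iv). The radii $r(\xi)$ are uniformly bounded above by $r_* := (\|f\|_{L^q}^q / (\omega_{\hn}\lambda^q))^{1/Q}$.

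The third step is to extract a countable subcollection via Vitali's covering lemma. The collection $\{\tfrac14 B_\xi : \xi \in \Omega_\lambda\}$ has uniformly bounded radii, so there is a countable pairwise disjoint subfamily $\{\tfrac14 B_i\}$ (giving (i)) whose $5$-times enlargements, namely $\{\tfrac54 B_i\}$, cover $\Omega_\lambda$; up to replacing the balls $B_i$ with their $\tfrac54$-enlargements and re-using doubling to keep (iv) intact with slightly enlarged constants, one arranges that the $B_i$ themselves cover $\Omega_\lambda$. Property (ii) follows by stacking: $\sum_i |B_i| \leq c_d^2 \sum_i |\tfrac14 B_i| \leq c_d^2 |\Omega_\lambda| \leq C\lambda^{-q}\|f\|_{L^q}^q$. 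For the moreover clause, I would run the construction inductively on $n=0,1,\ldots,K$: at level $\lambda_{n+1}$, the inclusion $\Omega_{\lambda_{n+1}} \subseteq \Omega_{\lambda_n}$ ensures every selected ball has its center in $\Omega_{\lambda_n}$, and the maximality of the radius at level $\lambda_n$ forces each $\tfrac14 B_i(\lambda_{n+1})$ to lie inside some $B_j(\lambda_n)$ by a direct radius comparison (if $\tfrac14 r_i(\lambda_{n+1})$ exceeded $r_j(\lambda_n)$ for the covering ball $B_j(\lambda_n)$ containing the center, it would contradict the definition of $r_j(\lambda_n)$ since averages are monotone in $\lambda$).

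The main obstacle will be bookkeeping the exact covering constant so that the shrink factor is $\tfrac14$ and not $\tfrac15$; this is a minor but persistent nuisance, resolved by passing to slightly dilated balls and re-absorbing the dilation into the (already dimensional) constants of (iv). A secondary delicate point is the coherence of the nested Calderón–Zygmund balls across levels: this requires choosing, at each level $\lambda_n$, the stopping-time balls from the same underlying pool (all balls on which the $L^q$-average exceeds the threshold) rather than making independent Vitali selections at each level, so that the radii are monotone in $\lambda$ and containment across levels becomes automatic.
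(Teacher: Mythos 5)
The paper deliberately omits a proof of this lemma (it cites Coifman--Weiss and Fefferman for the $L^q$ variant), so there is no in-text argument to compare against; your stopping-time construction with the $L^q$ maximal function and a Vitali selection is the standard route and is sound in outline. Two steps, however, do not work as written. First, the dilation bookkeeping: if you replace each $B_i$ by $\tfrac54 B_i$ so that the enlarged balls cover $\Omega_\lambda$, the quarter-balls of the new family are $\tfrac{5}{16}B_i$, which strictly contain the balls $\tfrac14 B_i$ whose disjointness Vitali gave you, so property (i) is lost rather than preserved. The clean fix is to run the basic covering lemma with dilation factor $4$ instead of $5$ (select greedily with radius ratio $1+\epsilon$, $\epsilon<\tfrac12$, so every pool ball $B(\zeta,r(\zeta))$ meets a selected $B(\xi_j,r(\xi_j))$ with $r(\zeta)\le(1+\epsilon)r(\xi_j)$ and hence lies in $B(\xi_j,4r(\xi_j))$), and then define $B_j:=B(\xi_j,4r(\xi_j))$; this gives (i) verbatim and costs only doubling constants in (iv). (A cosmetic point in (ii): the disjoint quarter-balls lie in $\{M_qf\ge\lambda\}$ rather than in $\Omega_\lambda=\{M_qf>C\lambda\}$, but the weak-type bound at level $\lambda$ still closes the estimate.)

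The second and more substantive gap is the justification of the ``moreover'' clause. The parenthetical argument --- that $\tfrac14 r_i(\lambda_{n+1})>r_j(\lambda_n)$ would ``contradict the definition of $r_j(\lambda_n)$'' --- is not a valid deduction: a large $|f|^q$-average over a ball centered at the point $\xi$ does not force a large average over a ball centered at the different point $\xi_j$, because enlarging and recentering loses a doubling constant, and the stopping condition is an exact threshold with no slack to absorb it. The correct argument uses exactly the two ingredients you already have on the table, combined differently: (a) monotonicity of the stopping radius at the \emph{same} center, $r^{(n+1)}(\xi)\le r^{(n)}(\xi)$, so that $\tfrac14 B_i(\lambda_{n+1})=B(\xi,r^{(n+1)}(\xi))\subseteq B(\xi,r^{(n)}(\xi))$; and (b) the covering lemma at level $\lambda_n$, which guarantees that the entire pool ball $B(\xi,r^{(n)}(\xi))$ --- not merely its center --- is contained in the $4$-fold dilate of some selected ball, i.e.\ in some $B_j(\lambda_n)$. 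The nesting is thus a consequence of the covering property applied to pool balls, not of a radius-comparison contradiction. With these two repairs the proposal is complete.
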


    \begin{proof}
        (Proof of \cref{bmointerpol}) Define $E_\alpha = \{\mathcal{M}f>\alpha\}$, $f_1=f\mathbbm{1}_{E_\alpha^c}$ and $f_2=f-f_1$. As in the proof of Calder\'on-Zygmund decomposition $E_\alpha\subset \bigcup_i B_i$. By \cref{CZD}, $f_1\lesssim\alpha$ so that
        \begin{align}\label{boundonf1}
            \|f_1\|^r_{L^r(\hn)} \lesssim \alpha^{r-q}\|f\|_{L^q(\hn)}^q.
        \end{align}
        We note here that as a consequence of \cref{JNL}, for any ball $B_i$
        \begin{align*}
            \fint_{B_i}|f-f_{B_i}|^r \lesssim [f]_{\text{BMO}(\hn)}^r.
        \end{align*}
        To see this, we observe that for \textit{any} ball $B$
        \begin{align*}
            \fint |f-f_B|^r = \frac{\int_0^\infty r\, \alpha^{r-1} \left|\xi\in B:|f-f_B|>\alpha\right|\,d\alpha}{|B|} \lesssim\int_0^\infty \alpha^{r-1} e^{-c_3\alpha/[f]_{\text{BMO}}}\,d\alpha\lesssim [f]_{\text{BMO}(\hn)}^r.
        \end{align*}
        Therefore
        \begin{align*}
            \int_{\hn} |f_2|^r \leq \sum_i \int_{B_i} |f|^r &\leq 2^r \sum_i\left(\int_{B_i}|f-f_{B_i}|^r + \int_{B_i} |f_{B_i}|^r  \right)\\
            &\lesssim \sum_i \left( [f]_{\text{BMO}(\hn)} + \alpha^r |B_i| \right)\\
            &\leq  \left( [f]_{\text{BMO}(\hn)} + \alpha^r \right) \left|\xi\in\hn: \mathcal{M}f(\xi)>\alpha\right|\\
            &\lesssim  \left( [f]_{\text{BMO}(\hn)} + \alpha^r \right) \frac{\|\mathcal{M}f\|^q_{L^q(\hn)}}{\alpha^q}\\
            &\lesssim  \left( [f]_{\text{BMO}(\hn)} + \alpha^r \right) \frac{\|f\|^q_{L^q(\hn)}}{\alpha^q},
        \end{align*} where in the first inequality, we used the fact that $f_{B_i} = \fint_{B_i} f\leq \left(\fint_{B_i} |f|^q\right)^{\frac1q}\lesssim \alpha$; in the second inequality, we used that $E_\alpha$ is covered by the collection of $B_i$'s; in the third inequality, we use Chebyshev's inequality; and in the last inequality, we use the boundedness of maximal function in $L^q$ for $q>1$. Now choosing $\alpha = \|f\|_{L^q(\hn)}$, we get
        \begin{align*}
            \int_\hn |f_2|^r \lesssim [f]_{\text{BMO}(\hn)}^r + \|f\|_{L^q(\hn)}^r.
        \end{align*}
                Also, from \cref{boundonf1}, we have 
            $\|f_1\|^r_{L^r(\hn)} \lesssim \|f\|_{L^q(\hn)}^r$. Combining the previous two inequalities, we obtain
            \begin{align}\label{bmoineq1}
            \|f\|_{L^r(\hn)} \lesssim [f]_{\text{BMO}(\hn)} + \|f\|_{L^q(\hn)}.
            \end{align} 
            Now we notice that in the class $BMO(\hn)\cap L^q(\hn)$, for $\xi=(\xi,y,s)$ and the function $f_\delta(\xi)=f(\delta x,\delta y,\delta^2 s)$, we have
            \begin{align*}
                \|f_\delta\|_{L^r(\hn)}&=\delta^{-\frac{Q}r}\|f\|_{L^r(\hn)},\\
                \|f_\delta\|_{L^q(\hn)}&=\delta^{-\frac{Q}q}\|f\|_{L^q(\hn)},\,\text{and}\\
                [f_\delta]_{\text{BMO}(\hn)} &= [f]_{\text{BMO}(\hn)},
            \end{align*} and the inequality \cref{bmoineq1} holds for $f_\delta$. Letting $\eta=\delta^{-\frac1Q}$, we have for all $\eta>0$
            \begin{align*}
                \eta^{\frac1r}\|f\|_{L^r(\hn)} \lesssim [f]_{BMO(\hn)} + \eta^{\frac1q}\|f\|_{L^q(\hn)}.
            \end{align*} Finally, we obtain the proposed interpolation inequality by taking $$\eta=\left(\frac{[f]_{\text{BMO}(\hn)}}{\|f\|_{L^q(\hn)}}\right)^q.$$
        \end{proof}

        \section{An extension theorem for fractional Sobolev spaces}\label{sec:extsob}

        Here we prove an extension theorem for fractional Sobolev spaces in Heisenberg group. The local version of this extension theorem appears in the thesis of \cite{Nhieu1996}. However, it is found that working in the general setting of metric measure spaces is beneficial. Although it is not immediately clear whether \emph{suitably defined} Lipschitz domains are extension domains. However, it is reasonably clear that a certain measure-density condition on the domain is essential to extension theorem results starting with \cite{Koskela2008}, \cite{Zhou2015}, \cite{DachunYang2022}. The result that we want can be repurposed from these articles, and we supply a proof for completeness.
	\begin{definition}
		Let $\Omega$ be a connected open subset of $\hn$. It is said to be a  regular domain, i.e., it satisfies the \emph{measure density condition}: if there exists a constant $C_1>0$ such that
		\begin{align}\label{measuredense}
			|B(\xi,r)\cap\Omega|\geq C_1r^Q\mbox{ for all }\xi\in\Omega\mbox{ and all }r\in(0,1].
		\end{align}
	\end{definition}
	\begin{theorem}\label{extendthm}
		Let $\Omega$ be a connected open subset of $\hn$. If $\Omega$ is a regular domain then $\Omega$ is a $W^{s,p}$-extension domain for all $s\in (0,1)$ and for all $p\in(1,\infty)$, that is, every function $u\in W^{s,p}(\Omega)$ can be extended to a function $\tilde{u}\in W^{s,p}(\hn)$ so that $\|\tilde{u}\|_{W^{s,p}(\hn)}\leq C\|u\|_{W^{s,p}(\Omega)}$.
	\end{theorem}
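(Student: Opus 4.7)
The plan is to adapt the Whitney-type extension construction which is well-established in doubling metric measure spaces (see \cite{Zhou2015, DachunYang2022}) to the sub-Riemannian geometry of $\hn$. Because $(\hn, |\cdot^{-1}\circ\cdot|, \mathcal{L}^{2N+1})$ is a $Q$-regular doubling metric measure space, the general machinery applies; my main task is to track the role of the measure density condition \eqref{measuredense} and to verify the chain estimates that underpin the extension across $\partial\Omega$.

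First I will construct a Whitney cover $\{B_i\}_{i\in\mathbb{N}}$ of $\hn\setminus\overline{\Omega}$ by balls $B_i=B(\xi_i,r_i)$ with $r_i\simeq\mathrm{dist}(\xi_i,\Omega)$, whose fixed dilates $\{\kappa B_i\}$ have bounded overlap, together with a Lipschitz partition of unity $\{\varphi_i\}$ subordinate to $\{\kappa B_i\}$ satisfying $|\nabla_{\hn}\varphi_i|\lesssim r_i^{-1}$ (such a construction is standard on any doubling metric space). For each $i$ I will fix a \emph{reflected} ball $\hat B_i$ whose center lies near $\partial\Omega$, at distance $\simeq r_i$ from $B_i$ and of comparable radius. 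The measure density condition \eqref{measuredense} guarantees $|\hat B_i\cap\Omega|\gtrsim r_i^Q$, so the average $u_{\hat B_i}:=\fint_{\hat B_i\cap\Omega}u$ is well defined and comparable to a genuine ball average. The extension is then
\begin{equation*}
\tilde u(\xi) := \begin{cases} u(\xi), & \xi\in\Omega, \\ \sum_i \varphi_i(\xi)\, u_{\hat B_i}, & \xi\in\hn\setminus\overline{\Omega}. \end{cases}
\end{equation*}

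Next I will control $\|\tilde u\|_{L^p(\hn)}$ and the Gagliardo seminorm $[\tilde u]_{W^{s,p}(\hn)}$. The $L^p$ bound follows by Jensen's inequality, bounded overlap, and a packing argument based on measure density. The Gagliardo energy splits as
\begin{equation*}
\iint_{\hn\times\hn}\frac{|\tilde u(\xi)-\tilde u(\eta)|^p}{|\eta^{-1}\circ\xi|^{Q+sp}}\,d\xi\,d\eta = I_{\Omega,\Omega} + 2I_{\Omega,\Omega^c} + I_{\Omega^c,\Omega^c},
\end{equation*}
of which $I_{\Omega,\Omega}=[u]_{W^{s,p}(\Omega)}^p$ trivially. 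For $I_{\Omega,\Omega^c}$, with $\eta\in B_i$ and $\xi\in\Omega$, I will write $\tilde u(\xi)-\tilde u(\eta)=\sum_j\varphi_j(\eta)(u(\xi)-u_{\hat B_j})$ and estimate each $|u(\xi)-u_{\hat B_j}|$ using the fractional Sobolev--Poincar\'e inequality of \cref{sobolpoinc} applied along a short chain of balls in $\Omega$ connecting $\hat B_j$ to a ball around $\xi$ of radius comparable to $|\eta^{-1}\circ\xi|$. The exterior-exterior term $I_{\Omega^c,\Omega^c}$ is reduced similarly to differences $u_{\hat B_i}-u_{\hat B_j}$ controlled along chains living entirely in $\Omega$.

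The main obstacle is the combinatorial chain estimate: each Whitney ball $B_i$ must be joined to its companion $\hat B_i\subset\Omega$ by a bounded-length chain of comparable-radius balls, and the resulting telescoping Poincar\'e inequalities must be summed against the singular kernel $|\eta^{-1}\circ\xi|^{-(Q+sp)}$ without loss. This is precisely where $Q$-regularity, doubling, and \eqref{measuredense} all enter essentially: the first two control geometric sums along a chain while \eqref{measuredense} guarantees that every relevant ball has a substantial portion inside $\Omega$ so that \cref{sobolpoinc} can be iterated. Once these chain estimates are in place, the remaining computation telescopes, and the final inequality $\|\tilde u\|_{W^{s,p}(\hn)}\leq C\|u\|_{W^{s,p}(\Omega)}$ follows along the template of \cite{Koskela2008, Zhou2015, DachunYang2022}.
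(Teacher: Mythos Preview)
Your overall architecture---Whitney decomposition of $\hn\setminus\overline{\Omega}$, partition of unity, extension by averages over reflected balls $\hat B_i\cap\Omega$---matches the paper's construction exactly. The divergence is in the analytical core, and there your plan has a genuine gap.

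You propose to control $|u(\xi)-u_{\hat B_j}|$ (for $\xi\in\Omega$, $\eta\in B_j\subset\Omega^c$) by a telescoping Poincar\'e argument ``along a short chain of balls in $\Omega$ connecting $\hat B_j$ to a ball around $\xi$.'' But the measure density condition \eqref{measuredense} gives no such chains: it only says each ball centred in $\overline{\Omega}$ meets $\Omega$ in a set of comparable measure, and says nothing about joining two points of $\Omega$ by a chain of balls contained in $\Omega$. Connectedness of $\Omega$ is far too weak to supply this. Moreover, \cref{sobolpoinc} is stated for full balls $B$, not for sets $B\cap\Omega$, so even a chain of balls merely \emph{centred} in $\overline{\Omega}$ does not let you telescope as written. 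This is the point at which the Jones-type chain machinery for integer-order Sobolev spaces genuinely requires a uniform/John condition, and measure density alone is insufficient.

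The paper avoids chains entirely by using the Hardy--Littlewood maximal function. For the mixed term one writes
\[
|\tilde u(\eta)-u(\xi)|\lesssim \fint_{B(\eta,6\,\mathrm{dist}(\eta,\Omega))\cap\Omega}|u(\zeta)-u(\xi)|\,d\zeta,
\]
observes that $\dhn{\zeta}{\xi}\lesssim\dhn{\eta}{\xi}$ for $\zeta$ in that ball (since $\mathrm{dist}(\eta,\Omega)\leq\dhn{\eta}{\xi}$), divides by $\dhn{\eta}{\xi}^{Q/p+s}$, and recognises the right-hand side as $\mathcal{M}\bigl(|u(\cdot)-u(\xi)|\,\dhn{\cdot}{\xi}^{-(Q/p+s)}\mathbbm{1}_\Omega\bigr)(\eta)$. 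One application of $L^p$-boundedness of $\mathcal{M}$ (valid for $p>1$) finishes this piece; the exterior--exterior term is handled by a further case split (points far apart versus close) and an iterated maximal function. No Poincar\'e inequality and no chains are used anywhere. This is why the hypothesis can be as weak as measure density.

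A secondary omission: since \eqref{measuredense} is only assumed for $r\in(0,1]$, Whitney balls with $r_i>1$ have no usable reflected companion. The paper restricts the sum to $\{i:r_i\le1\}$ and then multiplies by a global Lipschitz cutoff $\Psi$ supported in a $3$-neighbourhood of $\Omega$; you should incorporate the same localisation.
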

	
	\begin{remark}
		The proof for $p=1$ requires the use of Poincar\'e inequality while the rest of the proof is similar as in \cite{Koskela2008}. We do not prove this result here. One can also use median value instead of mean to avoid this complication as in \cite{Zhou2015}.
	\end{remark}
	
	\begin{lemma}
		If an open set $\Omega\subset\hn$ satisfies \cref{measuredense} then $|\overline{\Omega}\setminus\Omega|=0$.
	\end{lemma}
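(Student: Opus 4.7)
The plan is to propagate the measure density condition to every point of $\overline{\Omega}$ and then contrast it with the Lebesgue differentiation theorem, which forces a.e.\ point outside $\Omega$ to have zero density with respect to $\Omega$.

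First I would show that the bound in \cref{measuredense} extends, up to a universal constant, to all $\xi\in\overline{\Omega}$. Given such $\xi$ and a sequence $\xi_n\in\Omega$ with $\xi_n\to\xi$, for any fixed $r\in(0,1]$ there exists $n$ large enough that $|\xi_n^{-1}\circ\xi|<r/2$, and hence $B(\xi_n,r/2)\subset B(\xi,r)$. Since $r/2\in(0,1]$, \cref{measuredense} applied at $\xi_n$ yields
\begin{align*}
|B(\xi,r)\cap\Omega|\geq |B(\xi_n,r/2)\cap\Omega|\geq C_1\,(r/2)^Q.
\end{align*}

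Next I would use that the Haar (Lebesgue) measure on $\hn$ is $Q$-homogeneous, so $|B(\xi,r)|=c_Q r^Q$ for a constant $c_Q$ independent of $\xi$ and $r$. Combining with the previous step gives
\begin{align*}
\liminf_{r\to 0^+}\frac{|B(\xi,r)\cap\Omega|}{|B(\xi,r)|}\geq \frac{C_1}{c_Q\,2^Q}>0\quad\text{for every }\xi\in\overline{\Omega}.
\end{align*}

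Finally, the doubling metric measure space $(\hn,|\cdot^{-1}\circ\cdot|,\mathcal{L}^{2N+1})$ supports the Lebesgue differentiation theorem, which applied to the characteristic function $\mathbbm{1}_\Omega$ gives, for a.e.\ $\xi\in\hn\setminus\Omega$,
\begin{align*}
\lim_{r\to 0^+}\frac{|B(\xi,r)\cap\Omega|}{|B(\xi,r)|}=0.
\end{align*}
Since every point of $\overline{\Omega}\setminus\Omega$ belongs to $\hn\setminus\Omega$ yet has strictly positive lower density with respect to $\Omega$, the set $\overline{\Omega}\setminus\Omega$ can have only measure zero, which is the desired conclusion. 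There is no real obstacle: the only non-automatic ingredient is the extension of the density bound to $\overline{\Omega}$ via the short approximation step above, everything else being a direct invocation of standard facts.
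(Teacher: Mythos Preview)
Your proof is correct and follows essentially the same approach as the paper's own argument: both extend the measure density bound to points of $\overline{\Omega}$ via approximation from $\Omega$ and then invoke the Lebesgue density theorem on the doubling space $(\hn,|\cdot^{-1}\circ\cdot|,\mathcal{L}^{2N+1})$. The only cosmetic difference is that you use the inclusion $B(\xi_n,r/2)\subset B(\xi,r)$ (losing a harmless factor $2^{-Q}$) whereas the paper passes to the limit $|B(\xi_n,r)\cap\Omega|\to |B(\xi,r)\cap\Omega|$ directly, and you phrase the conclusion via positive lower density of $\Omega$ rather than sub-unit upper density of $\overline{\Omega}\setminus\Omega$; these are equivalent.
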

	\begin{proof}
		For $\xi\in \overline{\Omega}\setminus\Omega$ and $r\in (0,1]$, take a sequence $\xi_n\in\Omega$ such that $\xi_n\to \xi$ as $n\to\infty$. Then
		\begin{align*}
			|B(\xi,r)\cap\Omega|=\lim_{n\to\infty}|B(\xi_n,r)\cap\Omega|\geq C_1r^Q
		\end{align*} so that
		\begin{align*}
			\limsup_{r\to 0}\frac{|B(\xi,r)\cap(\overline{\Omega}\setminus\Omega)|}{|B(\xi,r)|}\leq 1-C_1C<1.
		\end{align*} Therefore $\xi$ is not a point of density for $\overline{\Omega}\setminus\Omega$. Since $\xi$ was arbitrary, it means that $|\overline{\Omega}\setminus\Omega|=0$. Lebesgue differentiation theorem  for measure metric spaces with doubling measure, of which Lebesgue density theorem is a consequence, can be found in \cite{Shanmugalingam2015}.
	\end{proof}
	We record two results that we will require in the course of the proof. The proof of Whitney Decomposition Theorem is to be found in \cite{Macias1979}.
	\begin{theorem}(Whitney Decomposition Theorem)\label{whitney}
		Suppose that $(\xi,d,\mu)$ is a metric measure space, where $\mu$ is a doubling measure. Let $U$ be an open set in $X$. Then there exists $M\in\mathbb{N}$ and a sequence of points $\xi_i\in X, i\in\mathbb{N}$ and radii $r_i>0$ such that
		\begin{enumerate}
			\item the balls $B(\xi_i,r_i/4)$ are pairwise disjoint.
			\item $U=\bigcup_{i\in\mathbb{N}}B(\xi_i,r_i).$
			\item $B(\xi_i,5r_i)\subset U$.
			\item If $x\in B(\xi,5r_i)$ then $5r_i<\text{dist}(\xi,X\setminus U)<15r_i$.
			\item there is $\xi_i^*\in X\setminus U$ such that $d(\xi_i,\xi_i^*)<15r_i$.
			\item $\sum_{i=1}^\infty \mathbbm{1}_{B(\xi_i,5r_i)}(\xi)\leq M$ for all $\xi\in U$. 
		\end{enumerate} 
	\end{theorem}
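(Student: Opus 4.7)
The plan is to construct the Whitney cover by selecting a scale function proportional to the distance to $X \setminus U$, extracting a maximal disjoint subfamily of balls at that scale, and then verifying the six listed properties using the Lipschitz nature of the scale together with the doubling property of $\mu$.

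First, I would define $r : U \to (0,\infty)$ by $r(\xi) := d(\xi, X \setminus U)/10$. This choice immediately gives $B(\xi, 10\, r(\xi)) \subset U$, and by definition of the infimum there exists $\xi^* \in X \setminus U$ with $d(\xi, \xi^*) \leq 10\, r(\xi) + \varepsilon$ for arbitrary $\varepsilon > 0$, which will supply property (5) after a small adjustment of constants. The function $r$ is $(1/10)$-Lipschitz, i.e., $|r(\xi) - r(\eta)| \leq d(\xi,\eta)/10$, a fact used repeatedly below.

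Second, I consider the family $\mathcal{F} = \{B(\xi, r(\xi)/4) : \xi \in U\}$. Since $r$ may be unbounded on $U$, I stratify $U$ into dyadic shells $U_k := \{\xi \in U : 2^k \leq r(\xi) < 2^{k+1}\}$ for $k \in \mathbb{Z}$; within each shell the radii are comparable, and the doubling property of $\mu$ forces any disjoint subfamily of $\mathcal{F}$ restricted to $U_k$ to be countable. A Zorn-type argument applied shell by shell then yields a maximal disjoint subfamily $\mathcal{G}$ of $\mathcal{F}$, enumerated as $\{B(\xi_i, r_i/4)\}_{i \in \mathbb{N}}$ with $r_i := r(\xi_i)$. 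Property (1) is immediate. For (2), fix $\xi \in U$; by maximality $B(\xi, r(\xi)/4)$ meets some $B(\xi_i, r_i/4)$, so $d(\xi, \xi_i) < r(\xi)/4 + r_i/4$, and combining with the Lipschitz estimate one obtains $r(\xi) \leq (41/39)\, r_i$ and $d(\xi, \xi_i) < r_i$, hence $\xi \in B(\xi_i, r_i)$. Properties (3) and (4) follow from the triangle inequality applied to any $x \in B(\xi_i, 5r_i)$, together with $d(\xi_i, X \setminus U) = 10\, r_i$.

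The main obstacle is property (6), the bounded overlap of the enlarged balls $B(\xi_i, 5r_i)$, and this is where the doubling property enters decisively. I would fix $\xi \in U$ and collect every index $i$ with $\xi \in B(\xi_i, 5r_i)$; the Lipschitz property of $r$ forces each such $r_i$ to be comparable to $r(\xi)$ with universal constants, so all relevant centers $\xi_i$ lie inside a single ball $B(\xi, C\, r(\xi))$ and each disjoint ball $B(\xi_i, r_i/4)$ admits a uniform lower bound $\mu(B(\xi_i, r_i/4)) \geq c\,\mu(B(\xi, r(\xi)))$ by doubling. Summing these lower bounds over the contributing indices and comparing with $\mu(B(\xi, C\, r(\xi))) \leq C'\, \mu(B(\xi, r(\xi)))$, once more by doubling, bounds the number of contributing indices by a constant $M$ depending only on the doubling constant of $\mu$.
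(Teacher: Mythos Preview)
Your argument is correct and follows the standard construction of a Whitney covering in a doubling metric measure space: fix a scale $r(\xi)$ proportional to the boundary distance, extract a maximal disjoint subfamily of balls at scale $r/4$, and then use the $(1/10)$-Lipschitz property of $r$ to compare radii of intersecting balls and the doubling of $\mu$ to bound the overlap number. The numerical checks you indicate for (2)--(5) all go through with the choice $r(\xi)=d(\xi,X\setminus U)/10$.

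There is, however, nothing in the paper to compare your proof against: the paper does not prove this theorem at all but simply quotes it and refers to Mac\'ias--Segovia \cite{Macias1979} for the proof. Two minor remarks on your write-up: the shell-by-shell stratification is unnecessary for extracting a maximal disjoint subfamily, since Zorn's lemma applies directly to the poset of disjoint subfamilies of $\mathcal{F}$, and countability then follows because the support of a doubling measure is separable; and you should record the standing assumption $U\neq X$, without which $r\equiv\infty$ and the construction is vacuous.
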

	\begin{theorem} (Partition of Unity)\label{partuni}
		Let $\{B_i\}_{i\in\mathbb{N}}$ be the whitney covering of $U$. Then there exists a family of  functions $\{\varphi_i\}_{i\in\mathbb{N}}$ satisfying
		\begin{enumerate}
			\item $\text{supp}\, \varphi_i \subset 2B_i$.
			\item $\varphi_i(\xi)\geq \tfrac1M$ for all $\xi\in B_i$.
			\item there is a constant $K$ such that each $\varphi_i$ is $\tfrac{K}{r_i}$-Lipschitz.
			\item $\sum_{i=1}^\infty \varphi_i(\xi)=\mathbbm{1}_U(\xi)$.
		\end{enumerate}
	\end{theorem}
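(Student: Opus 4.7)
The plan is to produce the partition of unity by the classical recipe: first build Lipschitz bump functions adapted to each Whitney ball, then normalize by the sum.

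First, for each $i\in\mathbb{N}$, define
\begin{align*}
\psi_i(\xi):=\max\Bigl\{0,\min\Bigl\{1,\,2-\tfrac{d(\xi,\xi_i)}{r_i}\Bigr\}\Bigr\}.
\end{align*}
From the definition, $\psi_i\equiv 1$ on $B_i=B(\xi_i,r_i)$, $\psi_i\equiv 0$ outside $2B_i$, and the three-piece formula together with the triangle inequality in $(X,d)$ shows that $\psi_i$ is $(1/r_i)$-Lipschitz. Next, for $\xi\in U$, set
\begin{align*}
\Sigma(\xi):=\sum_{i=1}^{\infty}\psi_i(\xi),\qquad \varphi_i(\xi):=\frac{\psi_i(\xi)}{\Sigma(\xi)},
\end{align*}
and extend $\varphi_i$ by zero outside $U$. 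The sum $\Sigma(\xi)$ is really a finite sum at each point because $\mathrm{supp}\,\psi_i\subset 2B_i\subset 5B_i$ and property (6) of \cref{whitney} gives $\#\{i:\xi\in 5B_i\}\le M$; in particular $\Sigma(\xi)\le M$ on $U$. On the other hand, property (2) and $\psi_i\ge\mathbbm{1}_{B_i}$ together give $\Sigma(\xi)\ge 1$ on $U$, so the quotient $\varphi_i=\psi_i/\Sigma$ is well-defined and continuous.

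With this in hand, properties (1), (2), (4) are essentially automatic: $\mathrm{supp}\,\varphi_i\subset \mathrm{supp}\,\psi_i\subset 2B_i$ gives (1); for $\xi\in B_i$ one has $\psi_i(\xi)=1$ and $\Sigma(\xi)\le M$, hence $\varphi_i(\xi)\ge 1/M$, giving (2); and for $\xi\in U$, $\sum_i\varphi_i(\xi)=\Sigma(\xi)/\Sigma(\xi)=1$, while for $\xi\notin U$ every $\varphi_i(\xi)=0$, so $\sum_i\varphi_i=\mathbbm{1}_U$, giving (4). I should also briefly check continuity of $\varphi_i$ across $\partial U$: as $\xi\to\partial U$ from inside $U$, any $j$ with $\psi_j(\xi)\ne 0$ satisfies $\xi\in 2B_j$, forcing $d(\xi_j,\partial U)\le 2r_j+d(\xi,\partial U)$ and, by property (4), $r_j\lesssim d(\xi,\partial U)$; the Lipschitz bound on $\psi_j$ then shows $\psi_j(\xi)\to 0$, so $\varphi_i\to 0$ and extension by zero is continuous.

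The main obstacle is property (3), the Lipschitz estimate for $\varphi_i$. The quotient rule formally gives
\begin{align*}
|\varphi_i(\xi)-\varphi_i(\eta)|\le\frac{|\psi_i(\xi)-\psi_i(\eta)|}{\Sigma(\eta)}+\frac{\psi_i(\xi)\,|\Sigma(\xi)-\Sigma(\eta)|}{\Sigma(\xi)\Sigma(\eta)},
\end{align*}
and since $\Sigma\ge 1$ the first term is bounded by $d(\xi,\eta)/r_i$. For the second term I need $\Sigma$ to be Lipschitz with a constant comparable to $1/r_i$ on a neighbourhood of $2B_i$. The key geometric fact, which I would extract from property (4) of \cref{whitney}, is the following comparability statement: if $2B_i\cap 2B_j\ne\emptyset$, then $r_i\asymp r_j$. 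Indeed, any point $\zeta\in 2B_i\cap 2B_j$ lies in $5B_i\cap 5B_j$, so by (4) applied twice, $5r_i\le\mathrm{dist}(\zeta,X\setminus U)\le 15r_j$ and symmetrically, giving $r_i/3\le r_j\le 3r_i$. Fix $\xi,\eta$ with $d(\xi,\eta)\le r_i/10$, say with $\xi\in 2B_i$ (the only case that matters since otherwise $\varphi_i(\xi)=\varphi_i(\eta)=0$ provided $d(\xi,\eta)$ is small). The indices $j$ contributing to $\Sigma(\xi)-\Sigma(\eta)$ are at most $2M$ in number, and each such $B_j$ has $r_j\asymp r_i$ by the comparability just proved, so
\begin{align*}
|\Sigma(\xi)-\Sigma(\eta)|\le\sum_{j:\,\{\xi,\eta\}\cap 2B_j\ne\emptyset}|\psi_j(\xi)-\psi_j(\eta)|\le 2M\cdot\frac{d(\xi,\eta)}{c\,r_i}.
\end{align*}
Combining the two bounds with $\Sigma\ge1$ and $\psi_i\le 1$ gives $|\varphi_i(\xi)-\varphi_i(\eta)|\le K d(\xi,\eta)/r_i$ for an absolute constant $K=K(M)$. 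A brief separate argument handles $d(\xi,\eta)>r_i/10$, where the bound is trivial from $|\varphi_i|\le 1$, completing the proof.
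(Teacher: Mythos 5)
Your construction is the standard one (normalized Lipschitz bumps subordinate to the Whitney balls), and it is correct; the paper records this statement without proof, so there is nothing to compare against beyond noting that this is exactly the argument the cited literature has in mind. Properties (1), (2) and (4) follow as you say from $1\le\Sigma\le M$, and the quotient-rule estimate together with the bounded overlap gives (3). One small imprecision: for an index $j$ with $\eta\in 2B_j$ but $\xi\notin 2B_j$, the balls $2B_i$ and $2B_j$ need not intersect, so the comparability $r_j\asymp r_i$ does not literally follow from your intersecting-balls lemma; it does, however, follow directly from \cref{whitney}~(4) applied at $\eta$ and at $\xi$, since $|\operatorname{dist}(\xi,X\setminus U)-\operatorname{dist}(\eta,X\setminus U)|\le d(\xi,\eta)\le r_i/10$ forces $5r_j<\operatorname{dist}(\eta,X\setminus U)<15r_j$ to be comparable to $r_i$. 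A second cosmetic point: with your choice of $\psi_i$ the closed support is $\overline{B}(\xi_i,2r_i)$ rather than the open ball $2B_i$; shrinking the cut-off slightly (e.g.\ making $\psi_i$ vanish outside $B(\xi_i,\tfrac74 r_i)$) fixes this without affecting anything else.
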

	
	\begin{proof}(Proof of \cref{extendthm}) Let $U=\hn\setminus\overline{\Omega}$. Consider a Whitney covering subordinate to $U$ as in \cref{whitney}. Let $J$ be the collection of all $i\in\mathbb{N}$ such that $r_i\leq 1$. Define $V=\{\xi\in\hn: \text{dist}(\xi,\Omega)\leq 3\}$. For each $u\in W^{s,p}(\Omega)$, we define
		\begin{align}
			\tilde{E}(u(\xi))=\begin{cases}
				u(\xi)&\mbox{ for }\xi\in\Omega\\
				0&\mbox{ for }\xi\in\overline{\Omega}\setminus\Omega\\
				\sum_{i\in J}\varphi_i(\xi)\, m_{B(\xi_i^*,r_i)\cap\Omega}(u)&\mbox{ for }\xi\in U.
			\end{cases}
		\end{align}
		Here $\xi_i^*$ is as in \cref{whitney} and $m$ denotes the mean or average over a set defined by 
		\begin{align*}
			m_{G}(u):=\frac1{|G|}\int_G\,u(\xi)\,d\xi.
		\end{align*}
		Let $\Psi:\hn\to [0,1]$ be a Lipschitz continuous function such that $\Psi\equiv 1$ on $\Omega$, $\Psi\equiv 0$ on $\hn\setminus V$. Set 
		\begin{align*}
			Eu=\Psi\tilde{E}u. 
		\end{align*} The content of this theorem is to prove that $\|Eu\|_{W^{s,p}(\hn)}\lesssim\|u\|_{W^{s,p}(\Omega)}$. For this purpose, it is enough to prove that $\tilde{E}u\in W^{s,p}(V)$ and 
		\begin{align}\label{secineq}
			\|\tilde{E}u\|_{W^{s,p}(V)}\lesssim\|u\|_{W^{s,p}(\Omega)}.
		\end{align} To see this, we observe that since $\Psi\in[0,1]$, we have 
		\begin{align}
			\|Eu\|_{L^{p}(\hn)}\leq \|\tilde{E}u\|_{L^{p}(V)}\lesssim\|u\|_{L^{p}(\Omega)}.
		\end{align} Also note that
		\begin{align*}
				\intgaglifrac[Eu]{\hn} & = \intgaglifrac[\Psi\tilde{E}]{V}\\
				&\lesssim \underbrace{\int_V\left(\int_{V\setminus B(\xi,1)} \frac{|\tilde{E}u|^p+|\tilde{E}u|^p}{\dhn{\xi}{\eta}^{Q+sp}} \,d\eta\right)\,d\xi)}_{I_1}\\
				&\qquad +\underbrace{\int_V\left(\int_{V\cap B(\xi,1)}\gaglifrac[\Psi\tilde{E}u]\,d\eta\right)\,d\xi}_{I_2}
		\end{align*}
		For $I_1$, we use Fubini's theorem as well as \cref{lem:annul} to get
		\begin{align*}
			I_1 &\leq \int_V|\tilde{E}u(\xi)|^p\left( \int_{V\setminus B(\xi,1)} \frac1{\dhn{\eta}{\xi}^{Q+sp}} \,d\eta \right)\,d\xi\\
			&\qquad \int_V|\tilde{E}u(\xi)|^p\left( \int_{V\setminus B(\eta,1)} \frac1{\dhn{\eta}{\xi}^{Q+sp}} \,d\xi \right)\,d\eta\\
			&\lesssim\|\tilde{E}u\|_{L^p(V)}.
		\end{align*}
		On the other hand, making use of \cite[Proposition 5.4.4]{Bonfig2007} and Lipschitz property of $\Psi$, we have
		\begin{align*}
			I_2 &\leq \int_V\left(\int_{V\cap B(\xi,1)}\gaglifrac[\tilde{E}u]\,d\eta\right)\,d\xi\\
			&\qquad  \int_V\left(\int_{V\cap B(\xi,1)}\gaglifrac[\Psi]|\tilde{E}u(\xi)|^p\,d\eta\right)\,d\xi\\
			&\lesssim \|\tilde{E}u\|^p_{W^{s,p}(V)} + \int_V|\tilde{E}u(\xi)|^p\left(\int_{B(\xi,1)}\frac{1}{\dhn{\xi}{\eta}^{Q+sp}}\,d\eta\right)\,d\xi\\
			&=\|\tilde{E}u\|^p_{W^{s,p}(V)} +\int_V|\tilde{E}u(\xi)|^p\left(Q\omega_{\hn}\int_0^1\frac{r^{Q-1}}{r^{Q+sp-p}}\,dr \right)\,d\xi\\
			&\lesssim \|\tilde{E}u\|_{W^{s,p}(V)}.
		\end{align*}
		with \cref{secineq}, this proves that $Eu\in W^{s,p}(\hn)$ and $\|Eu\|_{W^{s,p}(\hn)}\lesssim\|u\|_{W^{s,p}(\Omega)}$. Thus, it remains to prove \cref{secineq}. This will be achieved in multiple steps.
		\paragraph{\textbf{Step 1.}} we prove that $\|\tilde{E}u\|_{L^p(V)}\leq \|u\|_{L^p(\Omega)}$. For $\xi\in V\setminus \overline{\Omega}$ denote by $I_{\xi}$ the collection of those $i\in\mathbb{N}$ for which $\xi\in B(x_i,2r_i)$. The number of elements in $I_{\xi}$ is not more than $M$ by \cref{whitney}. 
		Let $z\in\Omega$, then for any $i\in \mathbb{N}$, $5B_i\subset X\setminus\overline{\Omega}$, therefore $|\xi_i^{-1}\circ z|>5r_i$. On the other hand, $\xi\in 2B_i$. Therefore, $|\xi^{-1}\circ z|\geq |\xi^{-1}\circ z|-|\xi^{-1}\circ\xi_i|>5r_i-2r_i=3r_i$. Taking supremum over all $z\in\Omega$, we get 
		\begin{align}
			\text{dist}(\xi,\Omega)>3r_i\mbox{ for }\xi\in 2B_i.
		\end{align}
		Also, for $\xi\in 2B_i$ and $z\in B(\xi_i^*,r_i)$, we have
		\begin{align*}
			|\xi^{-1}\circ z| \leq |{\xi_i^*}^{-1}\circ z|+|\xi_i^{-1}\circ \xi_i^*|+|\xi^{-1}\circ \xi_i|<r_i+15r_i+2r_i=18r_i<6\,\text{dist}(\xi,\Omega),
		\end{align*} therefore
		\begin{align}\label{cond1}
			B(\xi_i^*,r_i)\subset B(\xi,6\, \text{dist}(\xi,\Omega)).
		\end{align}
		Note also by the measure density condition \cref{measuredense}
		\begin{align}\label{cond2}
			|B(\xi_i^*,r_i)\cap\Omega|\sim|B(\xi,6\,\text{dist}(\xi,\Omega))\cap\Omega|\sim|B(\xi,6\,\text{dist}(\xi,\Omega))|.
		\end{align}
		Now, if $i\notin J$ then $r_i\geq 1$ and hence $\text{dist}(\xi,\Omega)>3r_i\geq 3$ for all $\xi\in 2B_i$. Therefore $2B_i\cap V=\emptyset$ and $I_{\xi}\subset J$. Moreover, when $\xi\notin I_\xi$ then $\varphi(\xi)=0$, thus 
        \begin{align}\label{pouS}\sum_{i\in I_{\xi}}\varphi(\xi)=\sum_{i\in J} \varphi(\xi)=\sum_{i\in \mathbb{N}}\varphi(\xi)=1.\end{align}  
    Combining these observations, we have the following chain of inequalities:
		\begin{align*}
			\tilde{E}u(\xi)&\leq \sum_{i\in I_{\xi}}\varphi(\xi)\fint_{B(\xi_i^*,r_i)\cap\Omega}|u(\eta)|\,d\eta\\
			&\leq \sum_{i\in I_{\xi}}\varphi(\xi)\fint_{B(\xi,r_i)}|u(\eta)|\,d\eta\\
   			&\lesssim \fint_{B(\xi,r_i)}|u(\eta)|\,d\eta\lesssim (\mathcal{M}u)(\xi)
		\end{align*} where $\mathcal{M}u$ is the Hardy-Littlewood maximal function of $u$ and although $u$ is defined only on $\Omega$, we extend it outside of $\Omega$ by zero for the last step in the above display. Now, the claim follows readily by the $L^p$-boundedness of the maximal function (see \cite[Theorem 3.5.6]{Shanmugalingam2015} for a proof). 
    \paragraph{\textbf{Step 2.}} Now, we prove $[\tilde{E}u]_{W^{s,p}(V)}\leq \|u\|_{W^{s,p}(\Omega)}$ where \[\displaystyle[f]_{W^{s,p}(V)}:=\intgaglifrac[f]{V}\] is the Gagliardo seminorm. We write
    \begin{align*}
        \intgaglifractwo[\tilde{E}u]{V}{V}&=\underbrace{\intgaglifrac[u]{\Omega}}_{II_1} + 2 \underbrace{\int_{V\setminus\Omega}\int_\Omega \gaglifractwo{\tilde{E}u(\eta)}{u(\xi)}{\xi}{\eta}\,d\xi\,d\eta}_{II_2} \\
        &\qquad + \underbrace{\intgaglifrac[\tilde{E}u]{V\setminus\Omega}}_{II_3}.
    \end{align*}
    \paragraph{\textbf{Substep 1:}} Clearly, $II_1\leq [u]_{W^{s,p}(\Omega)}\leq \|u\|_{W^{s,p}(\Omega)}$.
    \paragraph{\textbf{Substep 2:}} Let $\eta\in V\setminus\Omega$ and $\xi\in\Omega$. By \cref{cond2}, \cref{partuni} and other observations as in Step 1, we get 
    \begin{align*}
        |\tilde{E}u(\eta)-u(\xi)| &= \left|\sum_{i\in I_{\eta}} \phi_i(\eta)(m_{B_i^*\cap\Omega}(u)-u(\xi))\right|\\
        &\leq \sum_{i\in I_{\eta}}\phi_{i}(\eta)\fint_{B_i^*\cap\Omega}|u(\zeta)-u(\xi)|\,d\zeta\\
        &\lesssim \fint_{B_\eta(6\,\text{dist}(\eta,\Omega))\cap\Omega}|u(\zeta)-u(\xi)|\,d\zeta.
    \end{align*}
    For $\xi\in\Omega$ and $\zeta\in B_{\eta}(6\,\text{dist}(\eta,\Omega))$, we have
    \begin{align*}
        \dhn{\zeta}{\xi}\leq \dhn{\zeta}{\eta} + \dhn{\eta}{\xi}\leq 6\,\text{dist}(\xi,\Omega) + \dhn{\eta}{\xi} \lesssim \dhn{\eta}{\xi}.
    \end{align*}
    Therefore
    \begin{align*}
        \left(\gaglifractwo{\tilde{E}u(\eta)}{u(\xi)}{\xi}{\eta}\right)^{\tfrac1p} \lesssim \fint_{B_\eta(6\,\text{dist}(\eta,\Omega))\cap\Omega}\frac{|u(\zeta)-u(\xi)|}{\dhn{\zeta}{\xi}^{Q/p+s}}\,d\zeta \lesssim \mathcal{M}\left( \frac{|u(\cdot)-u(\xi)|}{\dhn{\cdot}{\xi}^{\frac{Q}p+s}} \mathbbm{1}_{\Omega}(\cdot)\right)(\eta).
    \end{align*} Finally, using $L^p$ boundedness of Hardy-Littlewood maximal function, we get
    \begin{align*}
        II_2 & \lesssim \int_{V\setminus\Omega}\int_\Omega \left[ \mathcal{M}\left( \frac{|u(\cdot)-u(\xi)|}{\dhn{\cdot}{\xi}^{\frac{Q}p+s}} \mathbbm{1}_{\Omega}(\cdot)\right)(\eta) \right]^p \,d\xi\,d\eta\\
        & \lesssim \int_\Omega\int_{\hn}\left[ \mathcal{M}\left( \frac{|u(\cdot)-u(\xi)|}{\dhn{\cdot}{\xi}^{\frac{Q}p+s}} \mathbbm{1}_{\Omega}(\cdot)\right)(\eta) \right]^p\,d\eta\,d\xi\\
        &\leq \int_\Omega\int_{\hn} \frac{|u(\eta)-u(\xi)|^p}{\dhn{\eta}{\xi}^{Q+sp}}\mathbbm{1}_{\Omega}(\eta)\,d\eta\,d\xi\\
        &\leq \int_\Omega\int_{\Omega} \frac{|u(\eta)-u(\xi)|^p}{\dhn{\eta}{\xi}^{Q+sp}}\,d\eta\,d\xi = \|u\|^p_{W^{s,p}(\Omega)}.
    \end{align*}
    \paragraph{\textbf{Substep 3:}} we finally estimate $II_3$ by further splfurther splitting $V\setminus\Omega$ into two parts:
    \begin{align*}
        D_1(\xi)\equiv\left\{\eta\in V\setminus\Omega:\dhn{\xi}{\eta}\geq\tfrac12\max\{\text{dist}(\xi,\Omega),\text{dist}(\eta,\Omega)\}\right\},\mbox{ and}
    \end{align*}
    \begin{align*}
        D_2(\xi)\equiv\left\{\eta\in V\setminus\Omega:\dhn{\xi}{\eta}<\tfrac12\max\{\text{dist}(\xi,\Omega),\text{dist}(\eta,\Omega)\}\right\}.
    \end{align*}
    we write
    \begin{align*}
        II_3 = \underbrace{\intgaglifracflip[\tilde{E}u]{D_1(\xi)}{V\setminus\Omega}}_{II_{3,1}} + \underbrace{\intgaglifracflip[\tilde{E}u]{D_2(\xi)}{V\setminus\Omega}}_{II_{3,2}}.
    \end{align*}
    \paragraph{\textbf{Estimate for $II_{3,1}$:}} If $\xi\in V\setminus\Omega$ and $\eta\in D_1(\xi)$, we have
    \begin{align*}
        \tilde{E}u(\xi)-\tilde{E}u(\eta) &= \sum_{i\in I_\xi}\varphi_i(\xi)\, m_{B(\xi_i^*,r_i)\cap\Omega}(u) - \sum_{j\in I_\eta}\varphi_j(\eta)\, m_{B(\xi_j^*,r_j)\cap\Omega}(u)\\
        &=\sum_{i\in I_\xi}\sum_{j\in I_\eta}\varphi_i(\xi)\,\varphi_j(\eta)\,[ m_{B(\xi_i^*,r_i)\cap\Omega}(u) -m_{B(\xi_j^*,r_j)\cap\Omega}(u)],
    \end{align*} on account of $\sum_{i\in I_\xi}\varphi_i(\xi)=\sum_{j\in I_\eta}\,\varphi_j(\eta)=1$. 
    we also have, by \cref{cond1} and \cref{cond2} that
    \begin{align*}
        |m_{B(\xi_i^*,r_i)\cap\Omega}(u) -m_{B(\xi_j^*,r_j)\cap\Omega}(u)|&\lesssim \fint_{B(\xi_i^*,r_i)\cap\Omega}\fint_{B(\xi_j^*,r_j)\cap\Omega}|u(\zeta)-u(\chi)|\,d\zeta\,d\chi\\
        &\lesssim \fint_{B(\eta, 6\,\text{dist}(\xi,\Omega))\cap\Omega}\fint_{B(\xi, 6\,\text{dist}(\eta,\Omega))\cap\Omega}|u(\zeta)-u(\chi)|\,d\zeta\,d\chi.
    \end{align*}
    Now, for $\zeta\in B(\xi, 6\,\text{dist}(\eta,\Omega))$ and $\chi\in B(\eta, 6\,\text{dist}(\xi,\Omega))$, we have
    \begin{align*}
        \dhn{\zeta}{\chi} \leq \dhn{\zeta}{\xi} + \dhn{\xi}{\eta} + \dhn{\eta}{\chi}\leq 6\,\text{dist}(\eta,\Omega) + \dhn{\xi}{\eta} + 6\,\text{dist}(\xi,\Omega) \lesssim \dhn{\xi}{\eta}
    \end{align*} since $\dhn{\xi}{\eta}\geq \frac12\max\{\text{dist}(\xi,\Omega),\text{dist}(\eta,\Omega)\}$.
    Combining the previous three displays, we obtain
    \begin{align*}
        \left(\gaglifractwo{\tilde{E}u(\xi)}{\tilde{E}u(\eta)}{\eta}{\xi}\right)^{\frac1p} & \lesssim \fint_{B(\eta, 6\,\text{dist}(\xi,\Omega))\cap\Omega}\fint_{B(\xi, 6\,\text{dist}(\eta,\Omega))\cap\Omega}\frac{|u(\zeta)-u(\chi)|}{\dhn{\zeta}{\chi}^{Q/p+s}}\,d\zeta\,d\chi\\
        &\lesssim (\mathcal{M}\times\mathcal{M})(F)(\xi,\eta)
    \end{align*} where
    \begin{align*}
        F(\zeta,\chi) = \frac{|u(\zeta)-u(\chi)|}{\dhn{\zeta}{\chi}^{Q/p+s}}\mathbbm{1}_{\Omega}(\zeta)\mathbbm{1}_\Omega(\chi)
    \end{align*} and $\mathcal{M}\times\mathcal{M}$ represents an iterated application of the Hardy-Littlewood maximal function, first for $F(\zeta,\chi)$ with respect to $\zeta$ and second for $\mathcal{M}(F(\cdot,\chi))(\xi)$ with respect to $\chi$.
    Therefore, by the $L^p$ boundedness of maximal function, we have
    \begin{align*}
        II_{3,1} & \lesssim \int_{\hn}\int_{\hn} (\mathcal{M}\times\mathcal{M})^p(F)(\xi,\eta)\,d\xi\,d\eta \\
        &\lesssim \int_{\hn}\int_{\hn} \gaglifrac \mathbbm{1}_{\Omega}(\zeta)\mathbbm{1}_\Omega(\chi)\,d\xi\,d\eta\\
        &=\intgaglifrac[u]{\Omega} \leq  \|u\|^p_{W^{s,p}(\Omega)}.
    \end{align*}
    \paragraph{\textbf{Estimate for $II_{3,2}$:}} If $\xi\in V\setminus\Omega$ and $\eta\in D_2(\xi)$, we have by \cref{pouS} that
    \begin{align*}
        |\tilde{E}u(\xi)-\tilde{E}u(\eta)| &= \left|\sum_{i\in I_\xi\cup I_\eta}(\varphi_i(\xi)-\varphi_i(\eta))m_{B({\xi^*_i},r_i)\cap\Omega(u)}\right|\\
        &=\left|\sum_{i\in I_\xi\cup I_\eta}(\varphi_i(\xi)-\varphi_i(\eta))(m_{B({\xi^*_i},r_i)\cap\Omega(u)}-m_{B(\xi,6\,\text{dist}(\xi,\Omega))\cap\Omega}(u))\right|\\
        &\lesssim \sum_{i\in I_\xi\cup I_\eta} \frac{\dhn{\xi}{\eta}}{r_i}\left|m_{B({\xi^*_i},r_i)\cap\Omega(u)}-m_{B(\xi,6\,\text{dist}(\xi,\Omega))\cap\Omega}(u)\right|\\
        &\lesssim \sum_{i\in I_\xi\cup I_\eta} \frac{\dhn{\xi}{\eta}}{r_i}\left|\fint_{B({\xi^*_i},r_i)\cap\Omega(u)}\fint_{B(\xi,6\,\text{dist}(\xi,\Omega))\cap\Omega} |u(\zeta)-u(\chi)| \,d\zeta\,d\chi\right|,
    \end{align*} where we have used $\sum_{i\in I_\xi\cup I_\eta} (\varphi(\xi)-\varphi(\eta))=0$ in the second step, and \cref{partuni}~(3) in the third step.
    
    For all $i\in I_\xi \cup I_\eta$, we will now prove that $r_i\sim \text{dist}(\xi,\Omega)$ and
    \begin{align*}
        B(\xi_i^*,r_i)\subset B(\xi,20\,\text{dist}(\xi,\Omega))\mbox{ and } |B(\xi_i^*,r_i)|\sim |B(\xi, 20\,\text{dist}(\xi,\Omega))\cap\Omega|.
    \end{align*} For $i\in I_\xi$, this follows from \cref{cond1} and \cref{cond2}. Now, let $i\in I_\eta$, then it holds that
    \begin{align}
        \label{cond3}
        \frac13\text{dist}(\eta,\Omega) \leq \text{dist}(\xi,\Omega)\leq 3\,\text{dist}(\eta,\Omega).
    \end{align}
    In order to see this, take $\overline{\eta}\in\overline{\Omega}$ so that $\dhn{\eta}{\overline{\eta}}=\text{dist}(\eta,\Omega)$ and by $\dhn{\xi}{\eta}\leq\frac12\max\{\text{dist}(\xi,\Omega),\text{dist}(\eta,\Omega)\}$ we have
    \begin{align*}
        \text{dist}(\xi,\Omega) &\leq \dhn{\xi}{\overline{\eta}} \leq \dhn{\xi}{\eta} + \dhn{\eta}{\overline{\eta}}\\
        & \leq \frac12\text{dist}(\xi,\Omega)+\frac{1}{2}\text{dist}(\eta,\Omega)+\text{dist}(\eta,\Omega)\\
        & = \frac12 \text{dist}(\xi,\Omega) + \frac32 \text{dist}(\eta,\Omega),
    \end{align*} so that it follows that $\text{dist}(\xi,\Omega) \leq 3\,\text{dist}(\eta,\Omega)$. Similarly, we have $\text{dist}(\eta, \Omega)\leq 3\,\text{dist}(\xi,\Omega)$.
    Thus certainly $B(\eta,6\,\text{dist}(\eta,\Omega))\subset B(\xi,20\,\text{dist}(\xi,\Omega))$ and then by \cref{whitney} and \cref{measuredense} the desired claim
    \begin{align*}
        &B(\xi_i^*,r_i)\subset B(\eta,6\,\text{dist}(\eta,\Omega))\subset B(\xi,20\,\text{dist}(\xi,\Omega))\mbox{ and }\\
        &|B(\xi_i^*,r_i)|\sim|B(\eta,6\,\text{dist}(\eta,\Omega))|\sim |B(\xi, 20\,\text{dist}(\xi,\Omega))\cap\Omega|.
    \end{align*}
    is true. Going back to the estimate, using the fact that $\sharp I_{\xi}\cup I_\eta\lesssim 1$ and from the above calculations, we get
    \begin{align*}
        |\tilde{E}u(\xi)-\tilde{E}u(\eta)| &\lesssim \frac{\dhn{\xi}{\eta}}{\text{dist}(\xi,\Omega)}\fint_{B(\xi,20\,\text{dist}(\xi,\Omega))\cap\Omega}\fint_{B(\xi,6\,\text{dist}(\xi,\Omega))\cap\Omega} |u(\zeta)-u(\chi)| \,d\chi\,d\zeta\\
        &\lesssim \frac{\dhn{\xi}{\eta}}{\text{dist}(\xi,\Omega)}\fint_{B(\xi,20\,\text{dist}(\xi,\Omega))\cap\Omega}\left(\fint_{B(\xi,6\,\text{dist}(\xi,\Omega))\cap\Omega} |u(\zeta)-u(\chi)|^p \,d\chi\right)^{\frac1p}\,d\zeta\\
        &\lesssim \frac{\dhn{\xi}{\eta}}{\text{dist}(\xi,\Omega)^{1-s}}\fint_{B(\xi,20\,\text{dist}(\xi,\Omega))\cap\Omega}\left(\int_{B(\xi,6\,\text{dist}(\xi,\Omega))\cap\Omega} \frac{|u(\zeta)-u(\chi)|^p}{\dhn{\zeta}{\chi}^{Q+sp}} \,d\chi\right)^{\frac1p}\,d\zeta\\
        &\lesssim \frac{\dhn{\xi}{\eta}}{\text{dist}(\xi,\Omega)^{1-s}}\mathcal{M}\left(\left(\int_{\Omega} \frac{|u(\cdot)-u(\chi)|^p}{\dhn{\cdot}{\chi}^{Q+sp}} \,d\chi\mathbbm{1}_\Omega(\cdot)\right)^{\frac1p}\right)(\xi),
    \end{align*} where in the second step, we use monotonicity of averages with respect to exponents; in the third step, we use $\dhn{\zeta}{\chi}\lesssim\text{dist}(\xi,\Omega)$ due to $\zeta,\chi\in B(\xi,20\,\text{dist}(\xi,\Omega))\cap\Omega$. As a result, we have
    \begin{align*}
        II_{3,2}\lesssim \int_{V\setminus\Omega}\left(\int_{D_2(\xi)}\frac{\dhn{\xi}{\eta}^{p-Q-sp}}{\text{dist}(\xi,\Omega)^{p-sp}}\,d\eta\right)\times\left[\mathcal{M}\left(\left(\int_{\Omega} \frac{|u(\cdot)-u(\chi)|^p}{\dhn{\cdot}{\chi}^{Q+sp}} \,d\chi\mathbbm{1}_\Omega(\cdot)\right)^{\frac1p}\right)(\xi)\right]^p\,d\xi
    \end{align*}
    For the first factor in the integrand, notice that by \cref{cond3}, we have $D_2(\xi)\subset B(\xi,15\,\text{dist}(\xi,\Omega))$ so that 
    \begin{align*}
        \int_{D_2(\xi)}\frac{\dhn{\xi}{\eta}^{p-Q-sp}}{\text{dist}(\xi,\Omega)^{p-sp}}\,d\eta\lesssim \int_{B(\xi,15\,\text{dist}(\xi,\Omega))}\frac{\dhn{\xi}{\eta}^{p-Q-sp}}{\text{dist}(\xi,\Omega)^{p-sp}}\,d\eta \lesssim 1
    \end{align*} where we make use of polar coordinates as in \cite[Proposition 5.4.4]{Bonfig2007}. Finally by the $L^p$ boundedness of the maximal function,
    \begin{align*}
        II_{3,2} & \lesssim \int_{\hn} \left[\mathcal{M}\left(\left(\int_{\Omega} \frac{|u(\cdot)-u(\chi)|^p}{\dhn{\cdot}{\chi}^{Q+sp}} \,d\chi\right)^{\frac1p}\mathbbm{1}_\Omega(\cdot)\right)(\xi)\right]^p\,d\xi\\
        & \lesssim \int_{\hn} \left[\left(\int_{\Omega} \frac{|u(\xi)-u(\chi)|^p}{\dhn{\xi}{\chi}^{Q+sp}} \,d\chi \right)^p \mathbbm{1}_\Omega(\xi)\right]^{\frac1p}\,d\xi\\
        & = \int_{\Omega} \int_{\Omega} \frac{|u(\xi)-u(\chi)|^p}{\dhn{\xi}{\chi}^{Q+sp}} \,d\chi\,d\xi \leq \|u\|^p_{W^{s,p}(\Omega)}.
    \end{align*}
    Combining all the steps, this finishes the proof of the existence of the extension operator.
     \end{proof}	

\section{A Gagliardo-Nirenberg Interpolation Inequality in Heisenberg Group}\label{sec:gagint} In this section, we prove the Gagliardo-Nirenberg interpolation inequality for Sobolev spaces on the Heisenberg Group. Firstly, we prove the extension domain version of Sobolev inequality using the extension operator whose existence we proved in the previous subsection.
	
	    \begin{lemma}\label{sobolheisen2}
		Let $s\in (0,1)$ and $sp<Q$. Let $B$ be a ball of radius $1$ in $\hn$. Then there exists a constant $C=C(Q,p,s)$ such that
		\begin{align}
			\|u\|_{L^{p^*}(B)}\leq C\|u\|_{W^{s,p}(B)} 
		\end{align} for all $u\in W^{s,p}(B)$ and $p^*=\frac{Qp}{Q-sp}$.
	\end{lemma}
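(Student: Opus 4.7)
The plan is to deduce this domain version of the Sobolev inequality from the already stated full-space Sobolev embedding \cref{sobolheisen} together with the extension theorem \cref{extendthm}. Concretely, the goal is to produce a bounded linear extension $\tilde{u}\in W^{s,p}(\hn)$ of $u$, apply the $\hn$-Sobolev embedding to $\tilde{u}$, and restrict back to $B$.

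First I would verify that the unit ball $B\subset\hn$ is a regular domain in the sense of the measure density condition \cref{measuredense}. This is the only real geometric check. For any $\xi\in B$ and any $r\in(0,1]$, one can show $|B(\xi,r)\cap B|\geq C_1 r^Q$ by exhibiting a small ball $B(\eta,\rho r)\subset B(\xi,r)\cap B$ for some uniform $\rho>0$ (for instance, moving slightly from $\xi$ toward $\xi_0$ along a homogeneous geodesic and using that the Lebesgue measure on $\hn$ scales as $r^Q$ under $\Phi_\lambda$). This is essentially quantitative openness of $B$ combined with the doubling property noted just after the pseudo-triangle lemma.

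Next, \cref{extendthm} yields an extension operator $E:W^{s,p}(B)\to W^{s,p}(\hn)$ satisfying
\begin{equation*}
\|Eu\|_{W^{s,p}(\hn)} \leq C(Q,p,s)\,\|u\|_{W^{s,p}(B)}
\end{equation*}
with $Eu=u$ a.e.\ on $B$. Setting $\tilde{u}=Eu$, the full-space inequality \cref{sobolheisen} gives
\begin{equation*}
\|\tilde{u}\|_{L^{p^*}(\hn)} \leq C\,[\tilde{u}]_{W^{s,p}(\hn)} \leq C\,\|\tilde{u}\|_{W^{s,p}(\hn)}.
\end{equation*}
Combining, and using $Eu=u$ on $B$,
\begin{equation*}
\|u\|_{L^{p^*}(B)} \leq \|\tilde{u}\|_{L^{p^*}(\hn)} \leq C\,\|\tilde{u}\|_{W^{s,p}(\hn)} \leq C\,\|u\|_{W^{s,p}(B)},
\end{equation*}
which is the claimed inequality.

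The only step requiring any actual work is the measure density check for the unit ball, but this is a standard consequence of the homogeneous structure of $\hn$ and the doubling of Lebesgue measure, and presents no conceptual difficulty. Everything else is a direct chaining of the two cited results, so no further delicate estimates are needed.
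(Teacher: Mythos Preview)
Your proposal is correct and follows essentially the same approach as the paper: verify that the unit ball satisfies the measure density condition, apply the extension theorem \cref{extendthm}, then use the full-space Sobolev inequality \cref{sobolheisen} and restrict. The only cosmetic difference is that the paper handles the measure density check by citing that Heisenberg balls are $(\varepsilon,\delta)$-domains (Vodop'yanov--Greshnov) and that such domains satisfy \cref{measuredense} (Haj\l asz--Koskela--Tuominen), whereas you propose a direct geometric verification; both are fine.
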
 
	
	\begin{proof}
		It was proved in \cite{Vodop1995} that a ball in the Heisenberg group is a so-called $(\varepsilon-\delta)$-domain. As observed in \cite[Introduction]{Koskela2008}, if $\Omega$ is a $\varepsilon-\delta$-domains then the measure density condition \cref{measuredense} is satisfied for all $x\in\overline{\Omega}$ and all $0<r<\delta$. As a result, a ball in the Heisenberg group $\hn$ is an extension domain for fractional Sobolev spaces. Hence for any ball $B\subset \hn$ and a function $u\in W^{s,p}(B)$, there exists  $\tilde{u}\in W^{s,p}(\hn)$ such that $\|\tilde{u}\|_{W^{s,p}(\hn)}\lesssim \|u\|_{W^{s,p}(B)}$.
		Hence
		\begin{align*}
			\|u\|_{L^{p^*}(B)}\leq \|\tilde{u}\|_{L^{p^*}(\hn)} \overset{\eqref{sobolheisen}}{\lesssim} [\tilde{u}]_{W^{s,p}(\hn)}\overset{\cref{extendthm}}{\lesssim} \|{u}\|_{W^{s,p}(B)}.
		\end{align*}
	\end{proof}

	Now, for the proof of the Gagliardo-Nirenberg interpolation inequality, we use the outline of the proof in \cite[Theorem 12.83]{Leoni17} which takes into consideration three separate cases, viz., $sp<Q$, $sp=Q$ and $sp>Q$.

    \begin{theorem}
        Let $1\leq p\leq\infty$, and $r> q\geq1$ satisfy 
        \begin{align}
            s-\frac{Q}p > -\frac{Q}{r},
        \end{align}
        let $\theta\in (0,1)$ be such that
        \begin{align}
            \theta\left(\frac1p-\frac{s}{Q}\right)+\frac{1-\theta}{q}=\frac1r.
        \end{align}
        Then there exists a constant $c=c(Q,s,p,q,\theta)>0$ such that 
        \begin{align}\label{gagliinterpol1}
            \|u\|_{L^r(\hn)}\leq c\|u\|^{1-\theta}_{L^q(\hn)}[u]^\theta_{W^{s,p}(\hn)}
        \end{align} for every $u\in L^q(\hn)\cap W^{s,p}(\hn)$.
    \end{theorem}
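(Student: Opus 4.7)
The plan is to establish the global Gagliardo--Nirenberg inequality by splitting into the three classical regimes: $sp < Q$ (subcritical), $sp = Q$ (critical), and $sp > Q$ (supercritical). In each regime the bound will be recovered from an already-established embedding combined with a standard two-space interpolation, and the only real work is to check that the interpolation parameter produced in each setting coincides with the $\theta$ prescribed by the theorem.

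In the subcritical case $sp < Q$, I would invoke \cref{sobolheisen} to obtain $\|u\|_{L^{p^*}(\hn)} \lesssim [u]_{W^{s,p}(\hn)}$ with $p^* = Qp/(Q-sp)$. The hypothesis $s - Q/p > -Q/r$ rewrites as $r < p^*$, and since $r > q$ one also has $q < p^*$. The log-convexity of $L^r$-norms then yields
\[
\|u\|_{L^r(\hn)} \leq \|u\|_{L^q(\hn)}^{1-\lambda}\|u\|_{L^{p^*}(\hn)}^{\lambda},
\]
where $\lambda$ is determined by $1/r = (1-\lambda)/q + \lambda/p^*$. Using $1/p^* = 1/p - s/Q$, a direct comparison with the defining relation of $\theta$ shows $\lambda = \theta$, so the bound follows immediately.

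In the critical case $sp = Q$, the hypothesis $s - Q/p > -Q/r$ is automatic, and $1/p - s/Q = 0$, so the defining relation of $\theta$ reduces to $1/r = (1-\theta)/q$, i.e.\ $1 - \theta = q/r$. I would then combine the embedding $|u|_{\mathrm{BMO}(\hn)} \lesssim [u]_{W^{s,p}(\hn)}$ from \cref{bmoembedsobol} with the BMO interpolation of \cref{bmointerpol}, which gives
\[
\|u\|_{L^r(\hn)} \lesssim \|u\|_{L^q(\hn)}^{q/r}\,|u|_{\mathrm{BMO}(\hn)}^{\,1-q/r},
\]
and the exponents match. In the supercritical case $sp > Q$, I would use the Morrey-type embedding of \cref{morreyheisen}, namely $[u]_{C^{0,\beta}(\hn)} \lesssim [u]_{W^{s,p}(\hn)}$ with $\beta = s - Q/p > 0$, and then apply the Hölder--$L^q$ interpolation of \cref{interlope1} with $\alpha = \beta$ to get
\[
\|u\|_{L^r(\hn)} \lesssim \|u\|_{L^q(\hn)}^{\theta_1}\,[u]_{C^{0,\beta}(\hn)}^{\,1-\theta_1},\qquad \theta_1 = \frac{\beta + Q/r}{\beta + Q/q}.
\]
A short algebraic manipulation using $1/p - s/Q = -\beta/Q$ gives $\theta_1 = 1 - \theta$, closing the argument.

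The main obstacle, such as it is, is the exponent bookkeeping required to match the parameter produced by each auxiliary interpolation with the $\theta$ in the statement; beyond this the proof in each case is a one-line application of a previously proved embedding. I would also remark that $\theta \in (0,1)$ is guaranteed by the hypotheses: $\theta > 0$ comes from $r > q$, and in the only non-trivial regime $sp < Q$ the strict inequality $r < p^*$ excludes $\theta = 1$.
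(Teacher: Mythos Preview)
Your proposal is correct and follows essentially the same approach as the paper: the same three-case split ($sp<Q$, $sp=Q$, $sp>Q$) using, respectively, \cref{sobolheisen} with H\"older interpolation, the combination of \cref{bmoembedsobol} with \cref{bmointerpol}, and \cref{morreyheisen} with \cref{interlope1}. Your exponent bookkeeping is slightly more explicit than the paper's, but the argument is otherwise identical.
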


\begin{proof} we divide the proof into three cases:
\item
\paragraph{\textbf{Case 1: $sp<Q$}} In this case, we make use of \cref{sobolheisen}. By the H\"older's inequality, we have 
\begin{align}
    \int_{\hn} |u|^r\,d\xi = \int_{\hn} |u|^{\theta r}|u|^{(1-\theta)r}\,d\xi \leq \left(\int_{\hn}|u|^{p^*}\,d\xi\right)^{\frac{\theta r}{p^*}}\left(\int_{\hn}|u|^q\right)^{\frac{(1-\theta)r}{q}} 
\end{align} for any $\theta\in [0,1]$ such that
\begin{align}
    \frac{\theta r}{p^*}+\frac{(1-\theta)r}{q}=1.
\end{align}Then by \cref{sobolheisen} we obtain
\begin{align}
    \|u\|_{L^r(\hn)}\lesssim [u]_{W^{s,p}(\hn)}^\theta\|u\|_{L^q(\hn)}^{1-\theta}
\end{align} where
\begin{align*}
    \frac{1}{p^*}=\frac{1}{p}-\frac{s}{Q}. 
\end{align*}we need to check that the conditions of the theorem imply that $p<p^*$ and $\theta\in (0,1)$. However, $\frac1{p^*}=\frac1{p}-\frac{s}{Q}>0$. Therefore, $\frac{1}{p^*}<\frac1p$, as required. Moreover $\theta(-\frac1p+\frac{s}Q+\frac1q)=\frac1q-\frac1r> 0.$ Also, the condition $            s-\frac{Q}p > -\frac{Q}{r},
$ makes sure that $-\frac1p+\frac{s}Q+\frac1q>\frac1q-\frac1r$ so that $\theta\in (0,1)$ is well-defined.
\item 
\paragraph{\textbf{Case 2: $sp=Q$}} This case is an immediate consequence of \cref{bmoembedsobol} applied to \cref{bmointerpol}.
\item 
\paragraph{\textbf{Case 3: $sp>Q$}} By \cref{interlope1}
        \begin{align*}
            \|u\|_{L^r(\hn)}\lesssim \|u\|_{L^q(\hn)}^{\theta_1} [u]_{C^{0,\alpha}(\hn)}^{1-\theta_1},
        \end{align*} where $\theta_1=\frac{\alpha+Q/r}{\alpha+Q/q}$. Applying \cref{morreyheisen} to the previous display, we obtain
        \begin{align*}
            \|u\|_{L^r(\hn)}\lesssim \|u\|_{L^q(\hn)}^{1-\gamma} [u]_{W^{s,p}(\hn)}^{\gamma},
        \end{align*} where $\gamma=\frac{\frac1q-\frac1r}{\frac{s}{Q}-\frac1p+\frac1q}$.
\end{proof}

Due to the extension theorem proved in the previous section, we have the following estimate

    \begin{theorem}\label{gaginter}
        Let $B\subset\hn$ be a ball of radius $1$. Let $1\leq p\leq\infty$, and $r> q\geq1$ satisfy 
        \begin{align}
            s-\frac{Q}p > -\frac{Q}{r},
        \end{align}
        let $\theta\in (0,1)$ be such that
        \begin{align}\label{indexsetup1}
            \theta\left(\frac1p-\frac{s}{Q}\right)+\frac{1-\theta}{q}=\frac1r.
        \end{align}
        Then there exists a constant $c=c(Q,s,p,q,\theta)>0$ such that 
        \begin{align}\label{gagliinterpol2}
            \|u\|_{L^r(B)}\leq c\|u\|^{1-\theta}_{L^q(B)}\|u\|^\theta_{W^{s,p}(B)}
        \end{align} for every $u\in L^q(B)\cap W^{s,p}(B)$.
    \end{theorem}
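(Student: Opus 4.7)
The plan is to reduce the ball version directly to the full-space inequality \cref{gagliinterpol1} via the extension operator $E$ constructed in \cref{extendthm}. Since a ball of radius $1$ in $\hn$ is an $(\varepsilon,\delta)$-domain (as used in the proof of \cref{sobolheisen2}), it satisfies the measure density condition \cref{measuredense}, and hence is a $W^{s,p}$-extension domain.

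First, given $u\in L^q(B)\cap W^{s,p}(B)$, I would apply \cref{extendthm} to obtain $\tilde{u}:=Eu\in W^{s,p}(\hn)$ with
\[
\|\tilde{u}\|_{W^{s,p}(\hn)}\leq c\,\|u\|_{W^{s,p}(B)},
\]
and in particular $[\tilde{u}]_{W^{s,p}(\hn)}\leq c\,\|u\|_{W^{s,p}(B)}$. The second ingredient I need is the $L^q$-bound
\[
\|\tilde{u}\|_{L^q(\hn)}\leq c\,\|u\|_{L^q(B)}.
\]
This is not stated explicitly in \cref{extendthm}, but it is implicit in the construction: the proof of Step~1 there shows $|\tilde{E}u(\xi)|\lesssim \mathcal{M}u(\xi)$ pointwise (with $u$ extended by zero outside $B$), and the cutoff $\Psi$ is a bounded Lipschitz function supported in a neighborhood of $B$. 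Hence $|Eu|\lesssim \mathcal{M}u$, and the $L^q$-boundedness of the Hardy-Littlewood maximal function on the doubling metric space $(\hn,|\cdot^{-1}\circ\cdot|,\mathcal{L}^{2N+1})$ for $q>1$ yields the required $L^q$-bound. (For $q=1$, a median-based variant of the extension as in \cite{Zhou2015} would be needed, but the statement involves $q\geq 1$; a short remark suffices.)

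Next, I apply the full-space Gagliardo-Nirenberg inequality \cref{gagliinterpol1} to $\tilde{u}$, which gives
\[
\|\tilde{u}\|_{L^r(\hn)}\leq c\,\|\tilde{u}\|_{L^q(\hn)}^{1-\theta}[\tilde{u}]_{W^{s,p}(\hn)}^{\theta}.
\]
Finally, since $\tilde{u}=u$ on $B$, the left-hand side dominates $\|u\|_{L^r(B)}$, while the two bounds established above control the right-hand side by $c\,\|u\|_{L^q(B)}^{1-\theta}\|u\|_{W^{s,p}(B)}^{\theta}$, yielding \cref{gagliinterpol2}. This works across all three cases $sp<Q$, $sp=Q$, $sp>Q$ uniformly, since the full-space inequality already handles them.

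The only delicate point is verifying the $L^q$-control on the extension, since \cref{extendthm} is only stated with a $W^{s,p}$ estimate. The main obstacle, therefore, is to justify that the specific extension built in Section~\ref{sec:extsob} also transports $L^q$-integrability for every $q\in(1,\infty)$; this is done by reading off the pointwise maximal-function bound from Step~1 of the proof of \cref{extendthm} and invoking the standard $L^q$-boundedness of $\mathcal{M}$ on doubling spaces. Once this is in place, the rest of the argument is a one-line reduction to the previously established \cref{gagliinterpol1}.
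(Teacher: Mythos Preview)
Your proposal is correct and follows essentially the same route as the paper: extend $u$ from $B$ to $\hn$ via \cref{extendthm}, apply the full-space inequality \cref{gagliinterpol1}, and restrict back. In fact you are more careful than the paper at one point: the paper's proof simply cites \cref{extendthm} for the bound $\|\tilde{u}\|_{L^q(\hn)}\lesssim\|u\|_{L^q(B)}$, even though the statement of \cref{extendthm} only controls the $W^{s,p}$-norm; you correctly trace this $L^q$-stability back to the pointwise maximal-function bound $|\tilde{E}u|\lesssim\mathcal{M}u$ established in Step~1 of that proof, together with the $L^q$-boundedness of $\mathcal{M}$ for $q>1$.
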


    \begin{proof}
        As noted earlier, a ball in the Heisenberg group $\hn$ is an extension domain for fractional Sobolev spaces. Hence for any ball $B\subset \hn$ and a function $u\in W^{s,p}(B)$, there exists  $\tilde{u}\in W^{s,p}(\hn)$ such that $\|\tilde{u}\|_{W^{s,p}(\hn)}\lesssim \|u\|_{W^{s,p}(B)}$.
        Hence
        \begin{align*}
            \|u\|_{L^r(B)}&\leq \|\tilde{u}\|_{L^r(\hn)}\overset{\eqref{gagliinterpol1}}{\lesssim} \|\tilde{u}\|^{1-\theta}_{L^q(\hn)}[\tilde{u}]^\theta_{W^{s,p}(\hn)}\overset{\cref{extendthm}}{\lesssim} \|{u}\|^{1-\theta}_{L^q(B)}[{u}]^\theta_{W^{s,p}(B)}.
        \end{align*}
    \end{proof}

\section*{Acknowledgements} The first author is financially supported by the fellowship granted by CSIR (File No.: 09/0106(13571)/2022-EMR-I).

\end{document}